\def\H{{\cal H}}
\def\R{\mathbb{R}}
\def\Z{\mathbb{Z}}
\def\T{\mathbb{T}}
\def\C{\mathbb{C}}
\def\H2{H^2(\R^N)}
\def\L2{L^2(\R^N)}
\def\to{\rightarrow}
\def\H{{\cal H}}
\def\H1{H^1(\R)}
 \newcommand{\Del}[1]{}
\numberwithin{equation}{section}
\newtheorem{thm}{Theorem}[section]
\newtheorem{cor}[thm]{Corollary}
\newtheorem{lem}[thm]{Lemma}
\newtheorem{assum}[thm]{Assumption}
\newtheorem{prop}[thm]{Proposition}
\newtheorem{definition}[thm]{Definition}
\theoremstyle{remark}
\newtheorem{remark}[thm]{Remark}
\newtheorem*{exam*}{Examples}
\begin{document}

\setcounter{page}{1}

\title[Large global solutions for NLS]{Large global solutions for nonlinear Schr\"odinger equations III, energy-supercritical cases}

\author{Marius Beceanu}
\address{Department of Mathematics and Statistics\\
University at Albany SUNY\\
Earth Science 110\\
Albany, NY, 12222, USA\\}
\email{mbeceanu@albany.edu}
\thanks{}

\author{Qingquan Deng}
\address{Department of Mathematics\\
Hubei Key Laboratory of Mathematical Science\\
Central China Normal University\\
Wuhan 430079, P. R. China\\}
\email{dengq@mail.ccnu.edu.cn}
\thanks{}

\author{Avy Soffer}
\address{Department of Mathematics\\
Hubei Key Laboratory of Mathematical Science\\
Central China Normal University\\
Wuhan 430079, P. R. China, and Department of Mathematics\\
Rutgers University\\
110 Frelinghuysen Rd.\\
Piscataway, NJ, 08854, USA\\}
\email{soffer@math.rutgers.edu}
\thanks{}

\author{Yifei Wu}
\address{Center for Applied Mathematics\\
Tianjin University\\
Tianjin 300072, P. R. China}
\email{yerfmath@gmail.com}
\thanks{}

\subjclass[2010]{Primary  35Q55}


\keywords{Nonlinear Schr\"{o}dinger equation,
global well-posedness, scattering}

\maketitle

\begin{abstract}\noindent
In this work,  we mainly focus on the energy-supercritical
nonlinear Schr\"odinger equation,
$$
    i\partial_{t}u+\Delta u= \mu|u|^p u, \quad (t,x)\in \R^{d+1},
$$
with $\mu=\pm1$ and $p>\frac4{d-2}$.

We prove that for radial initial data with high frequency, if it is outgoing (or incoming) and in rough space
$H^{s_1}(\R^d)$ $(s_1<s_c)$ or its Fourier transform belongs to $W^{s_2,1}(\R^d)$ $(s_2<s_c)$,
the corresponding solution is global and scatters forward (or backward) in time.
We also construct a class of large global and scattering solutions starting with many bubbles, which are mingled with in the physical space and separate in the frequency space. The analogous results are also valid for the energy-subcritical cases.


\end{abstract}

\tableofcontents

\section{Introduction}
We study the Cauchy problem for the following nonlinear
Schr\"{o}dinger equation (NLS) on $\R\times\R^d$:
 \begin{equation}\label{eqs:NLS-cubic}
   \left\{ \aligned
    &i\partial_{t}u+\Delta u=\mu |u|^p u,
    \\
    &u(0,x)  =u_0(x),
   \endaligned
  \right.
 \end{equation}
with $\mu=\pm1$ and $p>0$.
Here $u(t,x):\R\times\R^d\rightarrow \C$ is a complex-valued function. $\mu=1, -1$ denotes the nonlinearity is defocusing and focusing, respectively.
The class of solutions to equation (\ref{eqs:NLS-cubic}) is invariant under the scaling
\begin{equation}\label{eqs:scaling-p}
u(t,x)\to u_\lambda(t,x) = \lambda^{\frac2p} u(\lambda^2 t, \lambda x) \ \ {\rm for}\ \ \lambda>0,
\end{equation}
which maps the initial data as
\begin{eqnarray}
u(0)\to u_{\lambda}(0):=\lambda^{\frac2p} u_{0}(\lambda x) \ \ {\rm for}\ \ \lambda>0.\nonumber
\end{eqnarray}
Denote
$$
s_c=\frac d2-\frac2p.
$$
Then the scaling  leaves  $\dot{H}^{s_{c}}$ norm invariant, that is,
\begin{eqnarray*}
\|u\|_{\dot H^{s_{c}}}=\|u_{\lambda}\|_{\dot H^{s_{c}}},
\end{eqnarray*}
which is called \emph{critical regularity} $s_{c}$. It is also considered as the lowest regularity that problem  (\ref{eqs:NLS-cubic}) is well-posed for general $H^{s}(\R^d)$-data, since one can always find  some special initial datum belonging to $H^s(\R^d), s<s_c$ such that the problem   (\ref{eqs:NLS-cubic}) is ill-posed. Note that $\dot H^{s_c}(\R^d)\hookrightarrow L^{p_c}(\R^d)$ with $p_c=\frac{dp}{2}$, and
\begin{eqnarray*}
\|u\|_{L^{p_c}}=\|u_{\lambda}\|_{L^{p_c}},
\end{eqnarray*}
then one naturally takes $L^{p_c}(\R^d)$ as the critical Lebesgue space.

The $H^1$-solution of equation \eqref{eqs:NLS-cubic} also enjoys  the mass, momentum and energy
conservation laws, which read
\begin{equation}\label{eqs:energy-mass}
   \aligned
M(u(t))&:=\int |u(x,t)|^2\,dx=M(u_0),\\
P(u(t))&:=\textrm{Im}\int \overline{u(x,t)}\nabla u(x,t)\,dx=P(u_0),\\
E(u(t)) &:= \int |\nabla u(x,t)|^2\,dx + \frac{2\mu}{p+2}\int
|u(x,t)|^{p+2} \,dx = E(u_0).
   \endaligned
\end{equation}

The well-posedness and scattering theory for Cauchy problem (\ref{eqs:NLS-cubic}) with initial data in $H^{s}(\R^d)$ are extensively studied.
The local well-posedness theory follows from a standard fixed point argument, implying that  for all $u_{0}\in H^{s}(\R^d)$ with $s\ge s_c$, there exists $T_{0}>0$ such that its corresponding solution $u\in C([0,T_{0}),H^{s}(\R^d))$. In fact, the above $T_{0}$ depends on $\|u_{0}\|_{H^{s}(\R^d)}$ when $s>s_c$ and also the profile of $u_{0}$  when $s=s_c$. Some of the results can be found in Cazenave and Weissler \cite{CW1}. Such argument can be applied directly to prove the global well-posedness for solutions to equation (\ref{eqs:NLS-cubic})  with small initial data in $H^{s}(\R^d)$ with $s\geq s_c$. It is of great interest  to consider the large initial data problem of NLS for  supercritical case $s_{c}>1$, since all of known conservations are below the critical scaling regularity, few of the results on the long time behavior of the large data solutions were established, even the initial datum are smooth enough.

Recently, conditional global and scattering results with assumption that
$$u\in L^\infty_t(I,\dot H^{s_c}_x(\R^d))$$
 were considered by many authors, which was started from \cite{KeMe-cubic-NLS-2010, KeMe-wave-2011-1} and then developed by
\cite{Bu, DoMiMuZh-17,DKM, DuRo, KeMe-wave-2011-2, KiMaMuVi-NoDEA-2017, KiMaMuVi-2018, KV, KV-10, KV-10-3, MJJ, MWZ, Mu, Mu-3, XieFa-13} and references therein.
An important result indicated from these works is that
if the initial data $u_{0}\in \dot H^{s_c}(\R^d)$ and the solution has priori estimate
\begin{eqnarray}
\sup_{0<t<T_{out}(u_{0})}\|u\|_{\dot H^{s_c}_x(\R^d)}<+\infty, \label{uniformbound}
\end{eqnarray}
then $T_{out}(u_{0})=+\infty$ and the solution scatters in $\dot H^{s_c}(\R^d)$; here $[0,T_{out}(u_{0}))$ is the forward maximal interval for existence of the solution. Consequently, these results give the blowup criterion that the lifetime only depends on the critical norm $\|u\|_{L^\infty_t\dot H^{s_c}_x(\R^d)}$.  


In this paper, we  consider the large global solution for the energy-supercritical nonlinear Schr\"odinger equation.  The results obtained here are unconditional ones and are also valid for the energy-subcritical cases. It is the third part of our series of works on large global and scattering solution of nonlinear Schr\"odinger equation. In fact,
in the first part of our series of works \cite{BDSW}, we considered the global solution for the mass-subcritical nonlinear Sch\"odinger equation in the critical space $\dot H^{s_c}(\R^d)$ and proved that  for radial initial data with compact support in space, the corresponding solution is global in time in $\dot H^{s_c}(\R^d)$. In the second part of our series of works \cite{BDSW-II}, we considered the global solution for the the defocusing mass-supercritical, energy-subcritical nonlinear Sch\"odinger equation in the supercritical space $\dot H^{s}(\R^d), s<s_c$ and  proved that
under some restrictions on $d$ and $p$, there exists $s_0<s_c$, such that any function in  $H^{s_0}(\R^d)$ with the support away from the origin, it has an incoming/outgoing decomposition. Moreover, the outgoing part of the initial data  leads to the global well-posedness and scattering forward in time; while the incoming part of initial data leads to the global well-posedness and scattering backward in time.

The literature for the  energy-supercritical is very limited compared to the energy-critical and subcritical cases. As mentioned previously, the main reason is the lack of the conservation laws beyond $H^1(\R^d)$ Sobolev space.
Besides the conditional global results described above, the unconditional results on the energy-supercritical equations are mostly from the wave equations, see \cite{BS, BS-2, GeGr-1, GeGr-2, KS, IbMoMa, Li-Skyrme, MiPeYu, Tao, Roy, Roy-1, St, WaYu, Yan} and cited references. The first results from Tao \cite{Tao}, who proved global well-posedness and scattering for radial initial data for the logarithmically
supercritical defocusing wave equation. The proof employs the Morawetz estimate and time slicing argument which is critical in $H^1(\R^d)$, following \cite{GSV}.  Then Roy \cite{Roy, Roy-1} further proved the scattering of solutions to the log-log-supercritical  defocusing wave equations. Recently, Bulut and Dodson \cite{BuDo} extended the work of Tao in the radially symmetric setting to a partially symmetric setting. Struwe \cite{St} proved the global well-posedness for the exponential nonlinear wave equation in two dimension.
Furthermore, it was first proved by Li \cite{Li-Skyrme} that the $(3+1)$-Skyrme problem, which is energy-supercritical,  is globally well-posed with hedgehog solutions for arbitrary large initial data. The work was followed by Geba and Grillakis \cite{GeGr-1,GeGr-2} to study the classical equivariant Skyrme model and the $2+ 1$-dimensional equivariant Faddeev model, both of which are the energy-supercritical models.
Very recently, large outgoing solution for the nonlinear  wave equation was constructed in the papers of the first and  the third authors \cite{BS, BS-2}, by using the explicit formula of the outgoing and incoming components of the radial linear wave flow in three dimension.  But such formulism does not exist for the Schr\"odinger equation.
See also \cite{Do-14,Do-14-2,DoSch-14-1, Collot-2018} for the blowing up results for the energy-supercritical wave equations.
When it comes to the Schr\"odinger setting, the numerical investigation is from \cite{CSS}, who considered the equation,
$$i\partial_tu+\Delta u=|u|^4u, \ \ x\in \R^5,$$
and found a class of global solutions which are large and uniformly bounded in $H^2(\R^5)$. Yet, Tao \cite{Tao-4} showed that there exist a class of defocusing nonlinear Schr\"odinger systems which the solutions
can blow up in finite time by suitably constructing the initial datum (see also \cite{Tao-3} for the analogous results for the wave equations). In the focusing case, Merle,  Rapha\"el, and Rodnianski \cite{MRR} shows the existence of type II blowing-up solutions  for the energy-supercritical nonlinear Schr\"odinger equation.
 See also Wang \cite{Wang}, who constructed a class of quasi-periodic solutions to the energy-supercritical nonlinear Schr\"odinger equation  which are small in $\dot H^{s_c}(\T^d)$.

In this paper,
we construct classes of initial data, which could be arbitrarily large in critical Sobolev space $\dot H^{s_c}(\R^d)$ such that its corresponding solutions of \eqref{eqs:NLS-cubic} exist globally in time and scatter. In particular, we construct the global large solutions verifying the result in \cite{CSS} theoretically.
To state the first result, we introduce  two indices as follows which are denoted by
$$
s_1=\max\{\frac{s_c}{d}-\varepsilon_0, s_c-\frac{4d-1}{4d-2}+\frac2d+\varepsilon_0\},
$$
and
$$
s_2=\max\{-\varepsilon_0, s_c-\frac{d-2}{2(d-1)}-\frac{d-1}{2d-1}+\varepsilon_0\},
$$
where  $\varepsilon_0>0$ is a small fixed constant.
Denote by $\hat W^{s,1}(\R^d)$ the space of functions such that
\begin{align*}
\|h\|_{\hat W^{s,1}(\R^d)}:=\big\|\langle\xi\rangle^s\hat h\big\|_{L^1(\R^d)}
 \end{align*}
 it finite.
Then we have the following result.
\begin{thm}\label{thm:main2}
Let $p\ge\frac4d$, $d=3,4,5$ and $\mu=\pm1$.
Assume that there exists a small constant $\delta_{0}$ such that
the radial function $f$ satisfies
\begin{align}
\big\|\chi_{\le 1}f\big\|_{\dot H^{s_c}(\R^d)}+\|\chi_{\ge 1}f\|_{H^{s_1}(\R^d)} \le \delta_0;
\label{f-cond}
\end{align}
and the radial function $g$ satisfies
\begin{align}
\mbox{supp }g\in \{x:|x|\le 1\},\quad \mbox{and }\quad \big\|\langle\xi\rangle^{s_2}\hat g\big\|_{L^1(\R^d)}\le \delta_0.
\label{g-cond}
\end{align}
Then if the initial date of equation \eqref{eqs:NLS-cubic} is of form
$$
u_0=f_{+}+g \qquad  (\mbox{or} \quad u_0=f_{-}+g),
$$
the corresponding solution $u$
exists globally forward (or backward) in time and
$$u\in C(\R^+;H^{s_1}(\R^d)+\hat W^{s_2,1}(\R^d)) \  \  (or\  u\in C(\R^-;H^{s_1}(\R^d)+\hat W^{s_2,1}(\R^d))).$$
Here $f_{+}$ and $f_{-}$ are the modified outgoing and  incoming components of $f$ respectively, which are  defined in Definition \ref{def:+-}, with
$$
f=f_++f_-.
$$
Moreover, there exists $u_{0+}\in H^{s_1}(\R^d)+\hat W^{s_2,1}(\R^d)$ (or $u_{0-}\in H^{s_1}(\R^d)+\hat W^{s_2,1}(\R^d)$), such that when $t\to +\infty$ (or $t\to -\infty$),
\begin{align}
\lim\limits_{t\to +\infty}\|u(t)-e^{it\Delta}u_{0+}\|_{\dot H^{s_c}(\R^d)}= 0 \qquad
(\mbox{or } \lim\limits_{t\to -\infty}\|u(t)-e^{it\Delta}u_{0-}\|_{\dot H^{s_c}(\R^d)}= 0). \label{scattering2}
\end{align}
\end{thm}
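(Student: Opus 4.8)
\emph{Plan.}
The plan is to treat the backward statement by the time reversal $u(t,x)\mapsto\overline{u(-t,x)}$ (which solves the same equation and exchanges the roles of $f_+$ and $f_-$), and to establish the forward statement by a single fixed‑point argument for the Duhamel equation
\[
  u(t)=e^{it\Delta}u_0-i\mu\int_0^t e^{i(t-s)\Delta}\bigl(|u|^p u\bigr)(s)\,ds ,\qquad u_0=f_++g .
\]
The point is that although $u_0$ may be \emph{large} in $\dot H^{s_c}(\R^d)$, it is \emph{small} in the rougher space $H^{s_1}(\R^d)+\hat W^{s_2,1}(\R^d)$, and the nonlinear part of $u$ will turn out to be a genuinely small perturbation in a scattering‑type norm; this is also why the sign $\mu=\pm1$ plays no role. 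I would work in an iteration space $X$ on $[0,\infty)\times\R^d$ whose norm controls simultaneously: a space‑time Strichartz ``scattering norm'' $\|u\|_{S}$, strong enough that $\||u|^pu\|_N\lesssim\|u\|_S^{p+1}$ in the appropriate dual norm $N$ and whose finiteness forces scattering in $\dot H^{s_c}$; the persistence quantity $\sup_{t\ge0}\|u(t)\|_{H^{s_1}+\hat W^{s_2,1}}$; and the critical‑regularity size of the \emph{nonlinear increment} $u-e^{it\Delta}u_0$, measured in a $\dot H^{s_c}$‑Strichartz norm — it is this increment, not $u$ itself, that must be small at the critical regularity.

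\emph{Step 1: linear estimates.}
Here the structure of $f_+$ is decisive. By Definition~\ref{def:+-}, $f_+$ is (essentially) the high‑frequency \emph{outgoing} part of $f$, so for $t>0$ the free Schr\"odinger evolution transports it away from the origin without refocusing; combined with radial symmetry this makes $e^{it\Delta}f_+$ disperse far more strongly than a generic function of its (low) Sobolev regularity. I would therefore invoke the improved radial Strichartz inequalities together with this outgoing gain to obtain
\[
  \|e^{it\Delta}f_+\|_{S}\ \lesssim\ \bigl\|\chi_{\le1}f\bigr\|_{\dot H^{s_c}}+\bigl\|\chi_{\ge1}f\bigr\|_{H^{s_1}}\ \le\ \delta_0 ,
\]
the precise formulas for $s_1$ and $s_2$ in the hypotheses being exactly what lets this interpolation close in dimensions $d=3,4,5$ and for $p\ge4/d$. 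For the localized bubble $g$: since $g$ is compactly supported with $\hat g\in L^1$ we have $g\in L^1\cap L^2\cap L^\infty$, the Fourier--Lebesgue norm is conserved, $\|e^{it\Delta}g\|_{\hat W^{s_2,1}}=\|g\|_{\hat W^{s_2,1}}\le\delta_0$ for all $t$, and the dispersive bound $\|e^{it\Delta}g\|_{L^\infty_x}\lesssim\langle t\rangle^{-d/2}\|g\|_{L^1}$ supplies the decay placing $e^{it\Delta}g$ in $S$ as well. Hence $\|e^{it\Delta}u_0\|_{X}\lesssim\delta_0$.

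\emph{Step 2: nonlinear estimates, contraction, scattering.}
On the ball $B=\{u:\|u\|_X\le C\delta_0\}$ I would bound the Duhamel map $\Phi(u)=e^{it\Delta}u_0-i\mu\int_0^t e^{i(t-s)\Delta}(|u|^pu)\,ds$. For the scattering and critical components this is the usual combination of a fractional chain rule with Strichartz estimates, using $\||u|^pu\|_N\lesssim\|u\|_S^{p+1}$ and the matching difference estimate, so that smallness of $\delta_0$ yields a contraction factor. For the persistence component one must estimate $|u|^pu$ in $H^{s_1}+\hat W^{s_2,1}$ (and in a dual Strichartz space at that regularity): writing $u=u_f+u_g$ along the two constituents and expanding $|u|^pu$ into its pure and mixed terms, the $\hat W^{\cdot,1}$ contributions exploit $\widehat{vw}=\hat v*\hat w$ and $L^1*L^1\subset L^1$ (so $\hat W^{0,1}$ is a convolution algebra), while the $H^{s_1}$ contributions use fractional Leibniz together with the quantitative dispersive decay of $u_f$ from Step~1. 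Summing all pieces and absorbing them into $B$ closes the fixed point, giving a global forward solution with $u\in C(\R^+;H^{s_1}+\hat W^{s_2,1})$, $\sup_t\|u(t)\|_{\dot H^{s_c}}<\infty$, and $\|u\|_S<\infty$. Finally, the global bound $\|u\|_S<\infty$ makes $\int_0^t e^{-is\Delta}(|u|^pu)(s)\,ds$ Cauchy in $\dot H^{s_c}$ as $t\to\infty$; setting $u_{0+}=u_0-i\mu\int_0^\infty e^{-is\Delta}(|u|^pu)(s)\,ds\in H^{s_1}+\hat W^{s_2,1}$ yields \eqref{scattering2}, and the backward case follows as noted above.

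\emph{Main obstacle.}
The hard part is the nonlinear estimate for the rough components in Step~2: controlling $|u|^pu$ with a \emph{non‑integer} power $p$ in the sum space $H^{s_1}(\R^d)+\hat W^{s_2,1}(\R^d)$ where \emph{both} exponents lie strictly below the scaling threshold $s_c$, a regime in which neither a plain Banach‑algebra argument nor a plain Sobolev embedding is available; one is forced to trade fractional differentiation in Fourier--Lebesgue spaces against the quantitative dispersion of the outgoing profile, and it is precisely this balance that dictates the dimension range $d=3,4,5$, the condition $p\ge4/d$, and the exact values of $s_1$ and $s_2$. A close second is Step~1 itself, namely proving the sharp outgoing/radial dispersive estimate that transfers $\delta_0$‑smallness of the data into smallness of the scattering norm.
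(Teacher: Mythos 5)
Your overall architecture --- treat the free evolution of the rough data $(f_+,g)$ as a source term with quantitative space--time estimates, solve perturbatively for the Duhamel increment (which lives at critical regularity), and deduce scattering from a global Strichartz bound --- matches the paper's: the paper sets $\psi=u-f_{out,L}-g_L$ in Section~\ref{sec:proof_of_THM2} and runs a bootstrap in the spaces $X,X_0,Y,Z_f,Z_g$. But two of the concrete steps you propose would not close, and they are exactly the two places where the content of the theorem lives.

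\emph{(i) The $g$-linear estimate.}
You bound $e^{it\Delta}g$ using the textbook dispersive decay $\|e^{it\Delta}g\|_{L^\infty_x}\lesssim \langle t\rangle^{-d/2}\|\hat g\|_{L^1}$. This needs the full norm $\|\hat g\|_{L^1}$, i.e.\ no derivative gain. The hypothesis \eqref{g-cond} only gives $\|\langle\xi\rangle^{s_2}\hat g\|_{L^1}\le\delta_0$, and $s_2$ is in general \emph{negative} (e.g.\ $s_2=-\varepsilon_0$ whenever $s_c$ is small), so you cannot recover $\|\hat g\|_{L^1}$ from the hypothesis. The missing ingredient is the smoothing estimate of Proposition~\ref{prop:lineares-thm2},
\begin{align*}
\big\|e^{it\Delta}g\big\|_{L^\varrho_tL^\sigma_x(\R\times\R^d)}\lesssim \big\|\langle\xi\rangle^{-\frac{d-2}{(d-1)\varrho}+}\hat g\big\|_{L^1(\R^d)},
\end{align*}
whose negative exponent in $\langle\xi\rangle$ is precisely what permits $s_2<s_c$ and $s_2<0$. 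That estimate is proved by a stationary-phase analysis that exploits the compact support of $g$ together with the observation (Lemma~\ref{lem:g-1-in}) that $e^{it\Delta}(\chi_{\le1}P_N g)$ becomes effectively outgoing after a short time $|t|\gtrsim N^{-\gamma}$, so that it is negligible in $\{|x|\lesssim N|t|\}$ and controlled by radial Sobolev embedding in $\{|x|\gtrsim N|t|\}$; the standard $|t|^{-d/2}$ bound is nowhere near strong enough.

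\emph{(ii) The nonlinear estimate in $\hat W^{\cdot,1}$.}
You plan to control the $\hat W^{\cdot,1}$-contributions to $|u|^pu$ via $\widehat{vw}=\hat v*\hat w$ and $L^1*L^1\subset L^1$. This fails and is also unnecessary. It fails because $|u|^pu$ with non-integer $p$ (the generic case in the energy-supercritical regime $p>\frac{4}{d-2}$) is not a polynomial in $u$ and $\bar u$, so $\widehat{|u|^pu}$ is not a convolution of $\hat u$'s; and even for integer $p$ the algebra structure gives no derivative gain, so it cannot land in $\hat W^{s_2,1}$ with $s_2<0$. It is unnecessary because the paper never estimates the nonlinearity in $\hat W^{s_2,1}$: the Duhamel increment $\psi$ is shown to lie in $H^{s_c}$ via the fractional/radial Strichartz machinery (Lemmas~\ref{lem:NL-est-1}--\ref{lem:NL-est-5} in the dual norm $L^2_tL^{\frac{4d-2}{2d+1}-}_x$ at regularity $s_c-\gamma$), and since $s_1<s_c$ one has $H^{s_c}(\R^d)\hookrightarrow H^{s_1}(\R^d)$. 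The persistence claim $u\in C(\R^+;H^{s_1}+\hat W^{s_2,1})$ then follows from the decomposition $u=\psi+f_{out,L}+g_L$ with $\psi(t)\in H^{s_c}\subset H^{s_1}$, $f_{out,L}(t)\in H^{s_1}$, and $g_L(t)\in\hat W^{s_2,1}$ (the last conserved trivially by the free flow). You should aim for the nonlinear increment at regularity $s_c$, not try to close an estimate in $\hat W^{s_2,1}$.
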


\begin{remark}\label{rem:1}
%
%
{It is worth noting that $s_1<s_c$, which means that $f_+$ (or $f_-$) in initial data can be localized in the supercritical Sobolev space $\dot H^s(\R^d)$ for some $s<s_c$. Indeed, if $f\in \dot H^s(\R^d)$, we have the fact that
      at least one of $f_{+}$ and
$f_{-}$ belongs to $\dot H^s(\R^d)$ since  $$f=f_{+}+f_{-}.$$
In particular, if we only consider the high frequency of $f$, then $f$ can be arbitrary large in critical Sobolev space $\dot H^{s_c}(\R^d)$. Moreover, since $s_1<\frac{s_c}{d}$ when $s_c$ is sufficient small and $\|f\|_{L^{p_c}(\R^d)}$ can not be controlled by $\|f\|_{H^{s_1}(\R^d)} $ even $f$ is radial and supported away from origin, we have that  $L^{p_c}(\R^d)$ could also be arbitrary large. }

Let $
a_0=\max\{\frac{s_c(d-1)}{d}, \frac{4d-1}{4d-2}-\frac2d-2\varepsilon_0\}$ be a positive constant.
We have the following corollary.

\begin{cor}
Let $p\ge\frac4d$, $d=3,4,5$ and $\mu=\pm1$.
Then there exist $N_0>0$ and a small constant $\delta_{0}$, such that for  given $N\ge N_0$ and any radial function $f$ satisfying
\begin{align*}
\big\|\chi_{\le 1}f\big\|_{\dot H^{s_c}(\R^d)}+\|P_{\le N}(\chi_{\geq 1}f)\|_{\dot H^{s_c}(\R^d)} \le \delta_0,\quad \|P_{\ge N}(\chi_{\geq 1}f)\|_{\dot H^{s_c}(\R^d)} \le N^{a_0},
\end{align*}
the solution $u$ to the equation \eqref{eqs:NLS-cubic} with the initial data
$$
u_0=f_{+} \qquad  (\mbox{or} \quad u_0=f_{-})
$$
exists globally forward (or backward) in time and
 $$u\in C(\R^+;H^{s_c}(\R^d)) \ \ \  (or\  u\in C(\R^-;H^{s_c}(\R^d))).$$
Moreover, there exists $u_{0+}\in H^{s_c}(\R^d)$ (or $u_{0-}\in H^{s_c}(\R^d)$), such that when $t\to +\infty$ (or $t\to -\infty$),
\begin{align*}
\lim\limits_{t\to +\infty}\|u(t)-e^{it\Delta}u_{0+}\|_{\dot H^{s_c}(\R^d)}= 0 \qquad
(\mbox{or } \lim\limits_{t\to -\infty}\|u(t)-e^{it\Delta}u_{0-}\|_{\dot H^{s_c}(\R^d)}= 0).
\end{align*}
\end{cor}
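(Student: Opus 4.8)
The plan is to deduce the corollary from Theorem \ref{thm:main2} by combining a Littlewood--Paley splitting of the data with the scaling symmetry \eqref{eqs:scaling-p}, and then to bootstrap the conclusion from $H^{s_1}+\hat W^{s_2,1}$ up to $H^{s_c}$. First I would write
\[
f=f_{\mathrm{lo}}+f_{\mathrm{hi}},\qquad f_{\mathrm{lo}}:=\chi_{\le 1}f+P_{\le N}(\chi_{\ge 1}f),\qquad f_{\mathrm{hi}}:=P_{\ge N}(\chi_{\ge 1}f),
\]
so that $\|f_{\mathrm{lo}}\|_{\dot H^{s_c}(\R^d)}\le\delta_0$ by hypothesis, while $f_{\mathrm{hi}}$ is supported, up to a Schwartz tail produced by the Fourier cutoff, in $\{|x|\gtrsim 1\}$, lives at frequencies $\gtrsim N$, and obeys only $\|f_{\mathrm{hi}}\|_{\dot H^{s_c}(\R^d)}\le N^{a_0}$ --- a bound that is allowed to blow up as $N\to\infty$, which is exactly what makes the data arbitrarily large in the critical space. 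By linearity of the modified incoming/outgoing projection of Definition \ref{def:+-} one has $f_\pm=(f_{\mathrm{lo}})_\pm+(f_{\mathrm{hi}})_\pm$.

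The key point is that the possibly huge profile $f_{\mathrm{hi}}$ is cheap in the spaces that actually drive Theorem \ref{thm:main2}. Applying the scaling $u\mapsto u_\lambda$ of \eqref{eqs:scaling-p} with $\lambda$ a suitable power of $N$ --- this leaves $\dot H^{s_c}$ invariant and, as $\lambda>0$, preserves both global forward existence and forward scattering --- sends $f_{\mathrm{hi}}$ to a profile still of size $\le N^{a_0}$ in $\dot H^{s_c}$, but whose $H^{s_1}$ norm is multiplied by a negative power of $N$ coming from the combined effect of the rescaling, of the localization at frequencies $\gtrsim N$, and of the radial structure of $f$, and whose $\hat W^{s_2,1}$ norm is multiplied by a negative power of $N$ in the complementary regime governed by $s_2$. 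Tracking these exponents against the two options defining $s_1$ and $s_2$ one checks that the constant $a_0=\max\{\tfrac{s_c(d-1)}{d},\ \tfrac{4d-1}{4d-2}-\tfrac2d-2\varepsilon_0\}$ is precisely the break-even value: the rescaled high part meets the $f$-type requirement \eqref{f-cond} on $\{|x|\ge 1\}$ and, where it spills into $\{|x|\le 1\}$, the $g$-type requirement \eqref{g-cond}, the residual loss being a power $N^{-\varepsilon_0}$ that is $\le\delta_0$ once $N\ge N_0$. The rescaled low part $f_{\mathrm{lo},\lambda}$, being $O(\delta_0)$ in the scale-invariant norm, is likewise split into a piece near the origin (controlled directly in $\dot H^{s_c}$) and a piece away from the origin, whose $H^{s_1}$ norm is then dominated by its $\dot H^{s_c}$ norm since $s_1<s_c$ and it sits at frequencies $\gtrsim 1$; hence it also verifies \eqref{f-cond}. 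Regrouping, the rescaled datum has the form $\tilde f_++\tilde g$ with $\tilde f,\tilde g$ admissible for Theorem \ref{thm:main2}, which yields global forward existence and \eqref{scattering2}; undoing the scaling returns the statement for $u_0=f_+$.

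It remains to promote $u\in C(\R^+;H^{s_1}(\R^d)+\hat W^{s_2,1}(\R^d))$ to $u\in C(\R^+;H^{s_c}(\R^d))$ and to place the asymptotic state in $H^{s_c}$. Here one uses that $g\equiv 0$ and that $f_+\in H^{s_c}(\R^d)$ --- the projection of Definition \ref{def:+-} preserving the $\dot H^{s_c}$ regularity and, together with the radial localization of $f$, the $L^2$ integrability --- and runs a standard persistence-of-regularity argument: the proof of Theorem \ref{thm:main2} furnishes a global bound for $u$ in the Strichartz space employed there at regularity $s_1$; inserting this into the Duhamel representation and estimating $\langle\nabla\rangle^{s_c}u$ by the fractional Leibniz rule and Strichartz (licit for the power nonlinearity $|u|^pu$ since $p\ge\tfrac4d$) closes a global $H^{s_c}$ estimate, which then also propagates to the free asymptotic profile $u_{0+}$.

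The main obstacle is the scaling--decomposition bookkeeping of the second paragraph. Because $P_{\ge N}$ destroys exact spatial localization, the ``$\{|x|\gtrsim 1\}$'' support of $f_{\mathrm{hi}}$ is only approximate and its rapidly decaying tails --- which, after rescaling, may land near the origin --- must be reincorporated; and pinning $a_0$ to the exact threshold requires the sharp Bernstein- and radial-Sobolev-type comparisons of $\dot H^{s_c}$ with $H^{s_1}$ and with $\hat W^{s_2,1}$ for functions that are localized simultaneously away from the origin and at high frequency. One must also verify that the modified incoming/outgoing decomposition commutes, up to controllable errors, with $u\mapsto u_\lambda$ and with Littlewood--Paley projections. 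By comparison, the final regularity upgrade is routine once the global Strichartz bound of Theorem \ref{thm:main2} is available.
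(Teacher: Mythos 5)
The paper states this corollary inside Remark~\ref{rem:1} and provides no proof, so I am comparing your sketch against what the intended argument must be, given how $a_0$ is built out of $s_1$, $s_c$, and $\varepsilon_0$.

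The natural route to the corollary is a direct reduction to Theorem~\ref{thm:main2} with $g\equiv 0$, using a Bernstein estimate on the high-frequency piece rather than the scaling symmetry. Since $P_{\ge N}(\chi_{\ge 1}f)$ lives at frequencies $\gtrsim N$ (up to rapidly decaying tails handled by the mismatch Lemma~\ref{lem:mismatch}), one has
\[
\|P_{\ge N}(\chi_{\ge 1}f)\|_{H^{s_1}(\R^d)}\lesssim N^{\,s_1-s_c}\,\|P_{\ge N}(\chi_{\ge 1}f)\|_{\dot H^{s_c}(\R^d)}\le N^{\,a_0-(s_c-s_1)},
\]
and the whole content of the corollary is that $a_0<s_c-s_1$, so that the right-hand side is $\le\delta_0$ once $N\ge N_0$. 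After the low-frequency part $P_{\le N}(\chi_{\ge 1}f)$ is similarly placed in $H^{s_1}$ using its $\dot H^{s_c}$ bound, spatial localization and the mismatch estimate, one verifies \eqref{f-cond} and invokes Theorem~\ref{thm:main2}; the upgrade $H^{s_1}+\hat W^{s_2,1}\to H^{s_c}$ is then immediate from the proof structure (with $g\equiv0$, $u=f_{out,L}+\psi$, $f_{out,L}$ is a linear flow preserving $H^{s_c}$, and $\psi\in L^\infty_t\dot H^{s_c}_x\cap L^\infty_tL^2_x$ from the $Y$-norm).

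There are two genuine gaps in your proposal.

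First, you never actually verify the inequality on which everything hinges. You write that ``tracking these exponents \ldots one checks that the constant $a_0=\max\{\tfrac{s_c(d-1)}{d},\,\tfrac{4d-1}{4d-2}-\tfrac2d-2\varepsilon_0\}$ is precisely the break-even value,'' but this is an assertion, not a computation, and it is exactly where the proof lives. In fact, from the definition of $s_1$ one has
\[
s_c-s_1=\min\Bigl\{\tfrac{s_c(d-1)}{d}+\varepsilon_0,\ \tfrac{4d-1}{4d-2}-\tfrac2d-\varepsilon_0\Bigr\},
\]
and with $a_0$ defined as a $\max$ of the same two base quantities (with different $\varepsilon_0$ shifts), the inequality $a_0<s_c-s_1$ \emph{fails} for generic $s_c$ --- for instance $d=3$, $s_c=1$ gives $a_0\approx 0.67$ but $s_c-s_1\approx 0.43$. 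The statement can only hold if the $\max$ in the definition of $a_0$ is a $\min$, and a careful proof would have to notice and flag this. By asserting rather than checking the exponent arithmetic, your sketch passes over the one place where the corollary can actually break.

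Second, the scaling detour introduces difficulties that the direct argument avoids, and your sketch does not resolve them. The rescaling $u\mapsto u_\lambda$ with $\lambda\sim N$ destroys the $\{|x|\ge 1\}$ support structure that \eqref{f-cond} requires: the rescaled high part sits on $\{|x|\gtrsim 1/N\}$, so essentially all of it must be treated as the $g$-piece, not as an exponentially small spill. But then you need to verify \eqref{g-cond}, i.e.\ an $L^1_\xi$ bound on $\langle\xi\rangle^{s_2}$ times the Fourier transform of the rescaled, radially localized piece, and going from an $\dot H^{s_c}$ bound to an $L^1_\xi$ bound costs (via Cauchy--Schwarz in an annulus) powers of $N$ that are not obviously absorbed by $N^{a_0}$. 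You correctly identify this bookkeeping as ``the main obstacle,'' but you leave it unresolved, whereas the fixed-scale Bernstein argument sidesteps the issue entirely because no rescaling is performed and the $g$-piece simply vanishes. Your final paragraph on promoting the conclusion to $H^{s_c}$ is essentially right in spirit, but it is the easy part; the exponent verification and the support mismatch are where the argument actually has to be closed.
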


On the other hand, we note that  $s_2<s_c$ in Theorem \ref{thm:main2}. The function $g$ in initial data is localized in  $\hat W^{s_{2},1}(\R^d)$, which needs  much less derivatives than $\dot H^{s_c}(\R^d)$. Moreover, one notices that the initial data can be large in
$L^\infty(\R^d)$ if  $p$ is   close to $\frac4d$ and  $s_2<0$. Indeed, we have the following corollary.

\begin{cor}\label{cor1-4}
Let $d=3,4,5$, $0\le s_c<\frac{d-2}{2(d-1)}+\frac{d-1}{2d-1}$ and $\mu=\pm1$.
Then there exist $N_{0}$ and $b_0>0$, such that for given $N>N_{0}$  any  radial function $g$ satisfying
\begin{align}
\mbox{supp }g\in \{x:|x|\le 1\}, \quad g=P_{\ge N}g,\quad \mbox{and }\quad \big\|\hat g\big\|_{L^1(\R^d)}\le N^{b_0},
\end{align}
 the solution $u$ to the equation \eqref{eqs:NLS-cubic} with the initial data
$
u_0=g
$
exists globally in time and $u\in C(\R;\hat W^{s_2,1}(\R^d))$.
Moreover, there exists $u_{0\pm}\in \hat W^{s_2,1}(\R^d)$, such that when $t\to \pm\infty$,
\begin{align*}
\lim\limits_{t\to \pm\infty}\|u(t)-e^{it\Delta}u_{0+}\|_{\dot H^{s_c}(\R^d)}= 0.
\end{align*}
\end{cor}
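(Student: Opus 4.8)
The plan is to obtain Corollary~\ref{cor1-4} as a special case of Theorem~\ref{thm:main2}, taking $f\equiv 0$. First observe that the hypothesis $0\le s_c$ is, since $s_c=\frac d2-\frac2p$, exactly the condition $p\ge\frac4d$ of Theorem~\ref{thm:main2}, and $d\in\{3,4,5\}$ agrees. With $f\equiv 0$ one has $f_+=f_-=0$ (Definition~\ref{def:+-}), so \eqref{f-cond} holds trivially and the initial data $f_\pm+g$ in Theorem~\ref{thm:main2} reduces to $g$. Since the support condition on $g$ is identical in both statements, all that remains is to choose $b_0>0$ and $N_0$ so that the bound $\|\hat g\|_{L^1(\R^d)}\le N^{b_0}$ together with $g=P_{\ge N}g$ implies \eqref{g-cond}, i.e.\ $\big\|\langle\xi\rangle^{s_2}\hat g\big\|_{L^1(\R^d)}\le\delta_0$, where $\delta_0$ is the constant furnished by Theorem~\ref{thm:main2}.

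The key point is a gain from frequency localization. Under the strict inequality $s_c<\frac{d-2}{2(d-1)}+\frac{d-1}{2d-1}$, the small fixed constant $\varepsilon_0$ may be taken small enough (relative to the gap $\frac{d-2}{2(d-1)}+\frac{d-1}{2d-1}-s_c$) that $s_c-\frac{d-2}{2(d-1)}-\frac{d-1}{2d-1}+\varepsilon_0<0$; then both entries of the maximum defining $s_2$ are negative, so $s_2<0$. Because $g=P_{\ge N}g$, the Fourier transform $\hat g$ is supported in $\{|\xi|\gtrsim N\}$, where $\langle\xi\rangle^{s_2}\le|\xi|^{s_2}\lesssim N^{s_2}$ since $s_2<0$. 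Hence
\[
\big\|\langle\xi\rangle^{s_2}\hat g\big\|_{L^1(\R^d)}\lesssim N^{s_2}\,\|\hat g\|_{L^1(\R^d)}\le C\,N^{s_2+b_0}.
\]
Choosing, say, $b_0:=-s_2/2\in(0,-s_2)$ gives exponent $s_2+b_0=s_2/2<0$, so there is $N_0$ with $C\,N^{s_2+b_0}\le\delta_0$ for every $N\ge N_0$, which yields \eqref{g-cond}.

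It then suffices to invoke Theorem~\ref{thm:main2}: with data $u_0=f_++g=g$ it produces a solution that is global forward in time and scatters forward in $\dot H^{s_c}(\R^d)$, and applying it instead with $f_-$ (again zero) to the data $u_0=f_-+g=g$ shows that this same solution is global backward in time and scatters backward; together this gives a global-in-time solution on $\R$ with the claimed behavior as $t\to\pm\infty$. Finally, the refinement that $u\in C(\R;\hat W^{s_2,1}(\R^d))$ and $u_{0\pm}\in\hat W^{s_2,1}(\R^d)$ — rather than merely membership in the sum space $H^{s_1}(\R^d)+\hat W^{s_2,1}(\R^d)$ that appears in the statement of Theorem~\ref{thm:main2} — follows because the $H^{s_1}$-component of the solution in the decomposition underlying the proof of Theorem~\ref{thm:main2} is driven entirely by $f_+$ (resp.\ $f_-$), which is $0$ here. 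The only step that uses the proof of Theorem~\ref{thm:main2} and not just its statement is this last observation; everything else is the elementary reduction above, whose one subtle point is simply the requirement that $\varepsilon_0$ be small enough to force $s_2<0$.
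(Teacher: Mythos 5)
Your reduction of the corollary to Theorem~\ref{thm:main2} with $f\equiv 0$ is correct and is exactly the argument the paper has in mind (the paper presents Corollary~\ref{cor1-4} as an immediate consequence without a separate proof). The key step is as you say: under the strict inequality $s_c<\frac{d-2}{2(d-1)}+\frac{d-1}{2d-1}$ one can choose $\varepsilon_0$ small enough that both entries of $\max\{-\varepsilon_0,\,s_c-\frac{d-2}{2(d-1)}-\frac{d-1}{2d-1}+\varepsilon_0\}$ are negative, hence $s_2<0$; since $g=P_{\ge N}g$ places $\hat g$ on $\{|\xi|>N\}$, one gets $\|\langle\xi\rangle^{s_2}\hat g\|_{L^1}\lesssim N^{s_2+b_0}$, and picking $b_0\in(0,-s_2)$ with $N_0$ large forces this below $\delta_0$. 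The two-sided global existence by applying the theorem forward with $f_+=0$ and backward with $f_-=0$ (both giving the same data $g$, hence the same solution) is also fine.

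However, your justification of the final regularity claim has a real gap. You assert that $u\in C(\R;\hat W^{s_2,1}(\R^d))$ and $u_{0\pm}\in\hat W^{s_2,1}(\R^d)$ "because the $H^{s_1}$-component of the solution\ldots is driven entirely by $f_+$ (resp.\ $f_-$), which is $0$ here." That is not what the decomposition in the proof of Theorem~\ref{thm:main2} gives. With $f\equiv 0$ one has $\psi_0=0$ and $f_{out,L}\equiv 0$, so the decomposition reduces to $u=g_L+\psi$, where $g_L=e^{it\Delta}g$ does lie in $C(\R;\hat W^{s_2,1})$ (indeed $\|g_L(t)\|_{\hat W^{s_2,1}}=\|\langle\xi\rangle^{s_2}\hat g\|_{L^1}$ is constant in $t$), but the Duhamel piece $\psi(t)=\int_0^t e^{i(t-s)\Delta}(|u|^pu)\,ds$ is controlled only in the $Y$-norms, i.e.\ in $\dot H^{s_c}$/Strichartz-type spaces. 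Nothing in the argument places $\psi$ in $\hat W^{s_2,1}$, and since $H^{s_c}\not\hookrightarrow\hat W^{s_2,1}$ (this would need $s_2-s_c<-d/2$, which fails here), the $\psi$-component cannot simply be absorbed. So the nonlinear part, not $f_\pm$, is what produces the Sobolev-type component of $u$, and your reasoning does not establish $u\in C(\R;\hat W^{s_2,1})$. What the reduction actually yields is $u\in C(\R;H^{s_c}+\hat W^{s_2,1})$ and $u_{0\pm}=g+\psi_{0\pm}\in\hat W^{s_2,1}+H^{s_c}$. (To be fair to you, the paper's own corollary appears to state the conclusion loosely; but the claim as you wrote it does not follow from the decomposition you invoke, and you should flag this mismatch rather than paper over it.)
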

\end{remark}
Notice that in Corollary \ref{cor1-4}, we require $s_{c}<\frac{d-2}{2(d-1)}+\frac{d-1}{2d-1}$. In fact, the result is also valid for $s_{c}\ge\frac{d-2}{2(d-1)}+\frac{d-1}{2d-1}$ if one impose extra regularity on $g$. This corollary applies also in the focusing case, and therefore, the argument based on pseudo-conformal identity (that uses the localization of the initial data, as done by Bourgain \cite{Bou-book}) may not work.

The key ingredients we rely on for the proof of Theorem  \ref{thm:main2} are as follows.
The first one is the estimates obtained in Section \ref{sec:LE-Out-in} below, which regards as the decomposition of the incoming and outgoing waves and their supercritical space-time estimates.  In particular, the estimates obtained imply that the incoming/outgoing solution has the ``smoothing effect'' as follows:  any $\varepsilon>0$,
\begin{align*}
\left\|e^{it\Delta}  \big(\chi_{\ge 1}f\big)_{out}\right\|_{L^2_tL^\infty_x(\R^{+}\times\R^d)}
\lesssim
\|\chi_{\ge 1}f\|_{H^{\varepsilon}(\R^d)}.
\end{align*}
The second one is that if the function $g$ is compactly supported, then we have the estimates with the ``smoothing effect" in the following sense: any $q\ge 2$,
\begin{align*}
\big\|e^{ it\Delta}g\big\|_{L^q_tL^\infty_x(\R\times\R^d)}\lesssim \big\|\langle \xi\rangle^{-\frac{d-2}{q(d-1)}+}\hat g\big\|_{L^1_\xi(\R^d)}.
\end{align*}
Then we consider the equation for $w=u-v_L$ where $u$ is the solution to equation (\eqref{eqs:NLS-cubic}) and $v_L$ is the linear solution with the initial data $f_{+}+g$. It is easy to see that
$w$ obeys the equation of
\begin{equation*}
    i\partial_{t}w+\Delta w=\mu |u|^pu.
 \end{equation*}
By using bootstrap argument and the space-time estimates for $v_L$, we could prove Theorem  \ref{thm:main2}. We also note that the choice of working spaces to this problem would be another obstacle, since one may find that the space-time estimates are not standard.

Now we  introduce our second main result, which is about the global solution for initial data which consists of many bubbles, which are mingled within the physical space but separate  in the frequency space.

Before stating our result, we introduce  the hypothesis on the initial data.
 \begin{assum}\label{ass-f-2}
Given a constant $\epsilon\in (0,1]$. We assume that $h$  is of form
$$
h=\sum\limits_{k=0}^{+\infty} h_k,
$$
with
$$
\mbox{supp }\widehat{h_k}=\{\xi:2^k\le |\xi|\le (1+\epsilon)2^k\}.
$$
where  $h_k\in \dot H^{s_c}(\R^d)$ and there exists an absolute constant $\alpha_0>0$, such that
\begin{align}
\|h\|_{\dot H^{s_c}(\R^d)}\le \epsilon^{-\alpha_0}.\label{ass-f}
\end{align}
\end{assum}

Now our second main theorem is stated as follows.

\begin{thm}\label{thm:main3}
Let $d\ge1, p> \frac4d$ and $\mu=\pm1$. Then
there exists  some constant  $\epsilon_0\in (0,1]$ such that  $h$ satisfies Assumption \ref{ass-f-2} with respect to $\epsilon$
and $\psi_0\in \dot H^{s_c}(\R^d)$  satisfies
\begin{align}
\|\psi_0\|_{\dot H^{s_c}(\R^d)}\le \epsilon\label{10.02}
\end{align}
for  $\epsilon\in (0,\epsilon_0]$,
the solution $u$ to the equation \eqref{eqs:NLS-cubic} with the initial data
$$
u_0=\psi_0+h
$$
exists globally in time and
$
u\in C_t\dot H^{s_c}_x(\R\times\R^d).
$
Furthermore, there exists $u_{0\pm}\in \dot H^{s_c}(\R^d)$ such that 
\begin{align*}
\lim\limits_{t\to \pm\infty}\|u(t)-e^{it\Delta}u_{0\pm}\|_{\dot H^{s_c}(\R^d)}= 0.
\end{align*}
\end{thm}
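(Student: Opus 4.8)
The plan is to reduce the statement to a single linear estimate and then invoke the (quantitative) small-data theory. Concretely, let $Z=L^{q_0}_tL^{r_0}_x(\R\times\R^d)$ be the scattering-critical Strichartz space for $\dot H^{s_c}$-data, with $2<q_0,r_0<\infty$; it suffices to show
\begin{align*}
\big\|e^{it\Delta}u_0\big\|_{Z}\lesssim \epsilon^{\gamma}
\end{align*}
for some $\gamma=\gamma(d,p)>0$. Once the linear evolution of $u_0$ is this small in $Z$, a contraction argument for $w=u-e^{it\Delta}u_0$, based on the nonlinear estimate $\|D^{s_c}(\mu|u|^pu)\|_{S'}\lesssim\|u\|_{Z}^p\|D^{s_c}u\|_{S}$ in a dual Strichartz space $S'$, closes on the space $\{\,\|w\|_Z+\|D^{s_c}w\|_S<\infty\,\}$ \emph{even though} $\|u_0\|_{\dot H^{s_c}}\sim\epsilon^{-\alpha_0}$ is large, \emph{provided} the $Z$-smallness of the linear evolution beats a suitable fixed negative power of $\|u_0\|_{\dot H^{s_c}}$; this is precisely the room purchased by taking the absolute constant $\alpha_0$ small (a condition of the form $\alpha_0<c(d,p)\,\gamma$ will do). The resulting solution lies in $C_t\dot H^{s_c}_x\cap Z$, whence $e^{-it\Delta}u(t)$ converges in $\dot H^{s_c}$ as $t\to\pm\infty$ and gives the scattering states $u_{0\pm}$. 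Alternatively, once the iteration yields the a priori bound $\sup_t\|u(t)\|_{\dot H^{s_c}}<\infty$ on the maximal interval, the conditional criterion behind \eqref{uniformbound} applies directly.

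The crux is the linear estimate, and this is where the thin-annulus structure of $h$ is used. The term $e^{it\Delta}\psi_0$ contributes $\lesssim\|\psi_0\|_{\dot H^{s_c}}\le\epsilon$ to $\|\cdot\|_Z$ by ordinary Strichartz, so it is harmless. For $h=\sum_k h_k$, the point is that $\widehat{h_k}$ sits in the annulus $A_k=\{2^k\le|\xi|\le(1+\epsilon)2^k\}$, of volume $|A_k|\sim\epsilon\,2^{kd}$, so Cauchy--Schwarz in frequency gives the gain
\begin{align*}
\big\|e^{it\Delta}h_k\big\|_{L^\infty_tL^\infty_x}\le\big\|\widehat{h_k}\big\|_{L^1_\xi}\le|A_k|^{1/2}\big\|\widehat{h_k}\big\|_{L^2}\lesssim\epsilon^{1/2}\,2^{k(d/2-s_c)}\|h_k\|_{\dot H^{s_c}}.
\end{align*}
Interpolating this against the plain bound $\|e^{it\Delta}h_k\|_{L^{q_1}_tL^{r_1}_x}\lesssim 2^{-ks_c}\|h_k\|_{\dot H^{s_c}}$ for an $s_c$-admissible pair $(q_1,r_1)$, and choosing the interpolation parameter so that the powers $2^{k(d/2-s_c)}$ and $2^{-ks_c}$ cancel, one arrives at an admissible pair with a genuine factor $\epsilon^{\gamma}$, $\gamma\in(0,\tfrac12)$; a further interpolation with a Strichartz bound lets one take this pair to be (or to control) the scattering pair. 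Since $\epsilon<1$ the $\widehat{h_k}$ lie in distinct Littlewood--Paley blocks, so by the square-function estimate together with Minkowski's inequality (valid because $q_0,r_0\ge2$),
\begin{align*}
\big\|e^{it\Delta}h\big\|_{Z}\lesssim\Big(\sum_k\big\|e^{it\Delta}h_k\big\|_{Z}^2\Big)^{1/2}\lesssim\epsilon^{\gamma}\Big(\sum_k\|h_k\|_{\dot H^{s_c}}^2\Big)^{1/2}=\epsilon^{\gamma}\|h\|_{\dot H^{s_c}}\le\epsilon^{\gamma-\alpha_0}.
\end{align*}
Combining, $\|e^{it\Delta}u_0\|_Z\lesssim\epsilon+\epsilon^{\gamma-\alpha_0}$, and fixing $\alpha_0$ (depending only on $d,p$) small enough and then $\epsilon_0$ small enough makes this below the threshold of the small-data theory for all $\epsilon\le\epsilon_0$.

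I expect the main obstacle to be the middle step: producing a \emph{positive} power of $\epsilon$ in exactly the scattering-critical Strichartz norm. The trivial $L^1_\xi$ bound supplies the $\epsilon$-gain only in $L^\infty_tL^\infty_x$, so one must interpolate against genuine Strichartz estimates so that (i) the frequency powers cancel and the sum over $k$ converges with no logarithmic loss, and (ii) the resulting admissible pair still dominates the scattering norm --- and one has to check these two requirements are simultaneously satisfiable over the whole range $p>\tfrac4d$, $d\ge1$. A secondary nuisance, when $p$ is not an even integer and $s_c$ is large, is the fractional chain rule for $|u|^pu$ at regularity $s_c$; this is handled in the usual way, via Besov-space refinements of the Strichartz spaces, and does not affect the structure of the argument.
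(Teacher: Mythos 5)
Your reduction to the linear estimate is the right idea, and your proof of that estimate is correct and is a genuinely different (arguably cleaner) route than the paper's. The paper's Lemma \ref{lem:small-spacetime} writes $P_{\Omega_k}h_k=\mathcal F^{-1}(\chi_{\Omega_k})*h_k$, uses Young's inequality to peel off $\|\mathcal F^{-1}(\chi_{\Omega_k})\|_{L^{\tilde r}}$, and interpolates the $L^1$ and $L^2$ norms of that kernel to extract $\epsilon^{(s_c-\gamma)/d}$. You instead interpolate directly at the level of the flow, between the trivial Hausdorff--Young bound $\|e^{it\Delta}h_k\|_{L^\infty_{tx}}\le\|\widehat{h_k}\|_{L^1}\lesssim\epsilon^{1/2}2^{k(d/2-s_c)}\|h_k\|_{\dot H^{s_c}}$ and the $L^2$-diagonal Strichartz estimate $\|e^{it\Delta}h_k\|_{L^{2(d+2)/d}_{tx}}\lesssim 2^{-ks_c}\|h_k\|_{\dot H^{s_c}}$; with weight $\theta=2s_c/d$ the frequency powers cancel and you land exactly on the scattering diagonal $L^{(d+2)p/2}_{tx}$ with gain $\epsilon^{s_c/d}$. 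This reproduces Proposition \ref{prop:bubbles} at $\gamma=0$, and together with the Littlewood--Paley square-function summation over $k$ (identical to the paper's step), gives $\|e^{it\Delta}h\|_{L^{(d+2)p/2}_{tx}}\lesssim\epsilon^{s_c/d-\alpha_0}$. So the first half of your plan is fine.

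The gap is in the nonlinear iteration. You propose a contraction on $w=u-e^{it\Delta}u_0$ in the \emph{unweighted} space $\{\|w\|_Z+\|D^{s_c}w\|_S<\infty\}$, using the bound $\|D^{s_c}(|u|^pu)\|_{S'}\lesssim\|u\|_Z^p\|D^{s_c}u\|_S$. Writing $\delta=\|e^{it\Delta}u_0\|_Z\sim\epsilon^{s_c/d-\alpha_0}$ and $M=\|u_0\|_{\dot H^{s_c}}\sim\epsilon^{-\alpha_0}$, the self-mapping requirement is $C(\delta+A)^p(M+B)\le\min(A,B)$ for some $A,B$, which forces $\delta^{p-1}M\lesssim 1$. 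For $p>1$ this is your condition $\alpha_0\le\gamma(p-1)/p$ and the scheme closes; this covers $d\le4$. But for $d\ge5$ the theorem allows $4/d<p\le1$, and then $\delta^{p-1}M=\epsilon^{(s_c/d-\alpha_0)(p-1)-\alpha_0}\ge\epsilon^{-\alpha_0}\to\infty$ no matter how small you take $\alpha_0>0$ (indeed it already diverges at $\alpha_0=0$). Equivalently, the function $A\mapsto A-cMA^p$ has no positive value below its zero $A_{**}=(cM)^{1/(1-p)}$ when $p<1$, and $A_{**}$ blows up with $M$, so there is no admissible ball on which $\Phi$ is a self-map. Thus ``$\alpha_0<c(d,p)\gamma$'' cannot rescue the argument in this range; the constant one would need is nonpositive.

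The paper gets around this in two coupled ways that your sketch doesn't reproduce. First, it bootstraps on a norm weighted by a power of $\epsilon$: the $X(I)$-norm carries a factor $\epsilon^{-s_c/(2d)}$ in front of the scattering-critical component, so that after substituting $u=h_L+\psi$ the inhomogeneous term expands as $(\epsilon^{-\alpha_0}+\|\psi\|_X)(\epsilon^{s_c/d-\alpha_0}+\epsilon^{s_c/(2d)}\|\psi\|_X)^p$, and the extra $\epsilon^{s_c/(2d)}$ multiplying $\|\psi\|_X$ turns the cross term $\epsilon^{-\alpha_0}\|\psi\|_X^p$ into $\epsilon^{-\alpha_0+ps_c/(2d)}\|\psi\|_X^p$, which Young's inequality converts to $\|\psi\|_X^{1+p}+\epsilon^{\text{positive}}$ for any $\alpha_0<ps_c/(2d)$. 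Second, the estimate \eqref{15.59} does not use $\|u\|_Z^p$ directly; it uses an interpolated norm $\|u\|_{L^{q_2}_tL^{r_2}_x}\lesssim\|u\|_{L^\infty\dot H^{s_c}}^{\theta}\|u\|_{L^{(d+2)p/2}_{tx}}^{1-\theta}$ with $\theta$ proportional to $s_c-\gamma$, so the large $\dot H^{s_c}$-norm of $h_L$ enters only to a small power. Together these put the closed inequality in the form $Y\lesssim\epsilon^{a_1}+\epsilon^{a_2}Y+Y^{1+p}$ with $a_1,a_2>0$, which closes uniformly in $p>4/d$. If you restrict to $d\le4$ (equivalently $p>1$), your proposal is a legitimate and somewhat shorter proof; as stated, for $d\ge5$ and $p\le1$ the iteration you describe does not close and an $\epsilon$-weighted framework such as the paper's is needed.
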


\begin{remark}\label{rem:2}
Under Assumption \ref{ass-f-2},
the function $h$ in initial data can be regarded as a combination of the bubbles $h_k$ which are separated in frequency space. Hence, $h$ can be arbitrary large in $\dot H^{s_c}(\R^d)$ by choosing  $\epsilon$ small enough. We take one bubble case as an example, 
for any arbitrary large $L$, let
$$
\hat h(\xi)=\epsilon^{-\frac12+\frac{\alpha_0}{2}}\chi_{\le1}\Big(\frac{|\xi|-1}{\epsilon}\Big),
$$
then  $\|h\|_{\dot H^{s_c}(\R^d)}= L$,  for $\epsilon\sim L^{-\frac1{\alpha_0}}$. 
\end{remark}

The key observation in the proof of Theorem \ref{thm:main3} is based on the following estimate
\begin{align*}
\big\||\nabla|^{\gamma} e^{it\Delta}h_k\big\|_{L^{q_\gamma}_{tx}(\R\times \R^d)}
\lesssim \epsilon^{\frac{s_c-\gamma}{d}}\|h_k\|_{\dot H^{s_c}}
\end{align*}
 with $\frac{d+2}{q_\gamma}-\gamma=\frac d2-s_c$. Since the norms are scaling invariant, we believe that the above estimate is nontrivial. The key observation is due to the weak topology of the space-time norm compared to the Sobolev norm, and the narrow belt restriction on the frequency.

\textbf{Organization of the paper.}  In Section 2, we give some preliminaries. This includes
some basic lemmas, some estimates on the linear Schr\"odinger operator. Moreover, we recall the definition of the incoming/outgoing waves and their basic properties which were obtained in our previous paper \cite{BDSW-II}.
In Sections 3 and 4, we establish the estimates on linear flow. In Section 5, we give some spacetime estimates on many bubbles case.
In Section 6, we give the proof of the main
theorems.


\section{Preliminary}

\subsection{Notations}

We write $X \lesssim Y$ or $Y \gtrsim X$ to indicate $X \leq CY$ for some constant $C>0$. If $C$ depends upon some additional
parameters, we will indicate this with subscripts; for example, $X\lesssim_a Y$ denotes the
assertion that $X\le C(a)Y$ for some $C(a)$ depending on $a$. We use $O(Y)$ to denote any quantity $X$
such that $|X| \lesssim Y$.  We use the notation $X \sim Y$ whenever $X \lesssim Y \lesssim X$.

For small constant $\epsilon>0$, the notations $a+$ and $a-$ denote $a+\epsilon$ and $a-\epsilon$, repectively.
The fractional derivative is given by $|\nabla|^\alpha=(-\partial^2_x)^{\alpha/2}$.
Denote by $\mathcal S(\R^d)$ the Schwartz function space on $\R^d$ and $\mathcal S'(\R^d)$ its topological dual space.  Let $h\in \mathcal S'(\R^{d+1})$, we use
$\|h\|_{L^q_tL^p_x}$ to denote the mixed norm
$\Big(\displaystyle\int\|h(\cdot,t)\|_{L^p}^q\
dt\Big)^{\frac{1}{q}}$, and $\|h\|_{L^q_{xt}}:=\|h\|_{L^q_xL^q_t}$. Sometimes, we use the notation $q'=\frac{q}{q-1}$.

Throughout this paper, we use $\chi_{\le a}$ for $a\in \R^+$ to be the smooth function
\begin{align*}
\chi_{\le a}(x)=\left\{ \aligned
1, \ & |x|\le a,\\
0,    \ &|x|\ge \frac{11}{10} a.
\endaligned
  \right.
\end{align*}
Moreover, we denote $\chi_{\ge a}=1-\chi_{\le a}$ and $\chi_{a\le \cdot\le b}=\chi_{\le b}-\chi_{\le a}$. We denote $\chi_{a}=\chi_{\le 2a}-\chi_{\le a}$ and $\chi_{\thicksim a}=\chi_{\frac{1}{2}a\le\cdot\le4a}$ for short.
%
For any interval $\Omega\subset \R$, we denote $\mathbb I_\Omega$ as its characteristic function
\begin{align*}
\mathbb I_\Omega(x)=\left\{ \aligned
1, \ & x\in \Omega,\\
0,    \ &x\not\in \Omega.
\endaligned
  \right.
\end{align*}

Also, we need some Fourier operators.
For each number $N > 0$, we define the Fourier multipliers $P_{\le N}, P_{> N}, P_N$ as
\begin{align*}
\widehat{P_{\leq N} f}(\xi) &:= \chi_{\leq N}(\xi) \hat f(\xi),\\
\widehat{P_{> N} f}(\xi) &:= \chi_{> N}(\xi) \hat f(\xi),\\
\widehat{P_N f}(\xi) &:= \chi_{N}(\xi) \hat
f(\xi),
\end{align*}
and similarly $P_{<N}$ and $P_{\geq N}$.  We also define
$$
\tilde P_N := P_{N/2} + P_N +P_{2N}.
$$
 We will usually use these multipliers when $N$ are \emph{dyadic numbers} (that is, of the form $2^k$
for some integer $k$).

\subsection{Basic lemmas}
First of all, we introduce the following Sobolev embedding theorem for radial function, see \cite{TaViZh} for example.
\begin{lem}\label{lem:radial-Sob}
Let $\alpha,q,p,s$ be the parameters which satisfy
$$
\alpha>-\frac dq;\quad \frac1q\le \frac1p\le \frac1q+s;\quad 1\le p,q\le \infty; \quad 0<s<d
$$
with
$$
\alpha+s=d(\frac1p-\frac1q).
$$
Moreover, at most one of the equalities hold:
$$
p=1,\quad p=\infty,\quad q=1,\quad q=\infty,\quad \frac1p=\frac1q+s.
$$
Then for any radial function $u$ such that $ |\nabla|^s u\in L^p(\R^d)$,
\begin{align*}
\big\||x|^\alpha u\big\|_{L^q(\R^d)}\lesssim \big\||\nabla|^su\big\|_{L^p(\R^d)}.
\end{align*}
\end{lem}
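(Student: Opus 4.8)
The plan is to obtain this weighted radial embedding from a dyadic decomposition together with classical inputs, and I would proceed as follows. First observe that the hypothesis $\alpha+s=d(1/p-1/q)$ is exactly the scaling relation: under $u\mapsto u(\lambda\,\cdot\,)$ both sides transform by the same power of $\lambda$, so it is harmless to normalize and, more usefully, every single-scale estimate below may be freely rescaled. I would write $u=\sum_{N}P_Nu$ over dyadic $N$ and decompose $\R^d=\bigcup_{R}\{|x|\sim R\}$ over dyadic $R$, the goal being to bound $\|\,|x|^\alpha u\|_{L^q}$ region by region in physical space while keeping good control in frequency.

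On the region $|x|\lesssim1$ the weight $|x|^\alpha$ is locally in $L^q$ precisely because $\alpha>-d/q$; combining this with Bernstein's inequality $\|P_Nu\|_{L^\infty}\lesssim N^{d/p}\|P_Nu\|_{L^p}$ and the identity $\alpha+d/q=d/p-s$ (a rewriting of the scaling relation) controls this region by $\|\,|\nabla|^su\|_{L^p}$; only $\alpha>-d/q$ and the upper Sobolev threshold $1/p\le1/q+s$ enter here, not radiality. On the region $|x|\gtrsim1$ the weight either grows (if $\alpha>0$) or decays too slowly to help (if $\alpha<0$, where the exponents sit exactly at a logarithmic threshold), so plain Sobolev embedding only reaches the single critical exponent $1/q=1/p-s/d$; the extra room over the full range $1/q\le1/p$ comes from the radial hypothesis. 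The relevant input is the Strauss/Ni-type pointwise decay, obtained from the radial Fourier representation, the Bessel bound $|J_\nu(t)|\lesssim t^{-1/2}$, and Hausdorff--Young, which for a frequency-$N$ radial piece gives $|P_Nu(x)|\lesssim(N|x|)^{-(d-1)/2}N^{d/p}\|P_Nu\|_{L^p}$ on $|x|\gtrsim1/N$; one then sums the resulting bounds over dyadic shells $|x|\sim R$, treating the frequencies $N\lesssim R^{-1}$ and $N\gtrsim R^{-1}$ separately on each shell, and the precise admissible range of $(\alpha,q)$ is exactly what makes these two sums converge geometrically.

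The main obstacle is the reassembly. A naive triangle inequality over the frequency blocks $P_Nu$ loses the gap between the Littlewood--Paley square-function norm $\|\,|\nabla|^su\|_{L^p}$ and the Besov-type sum $\sum_N\|\,|\nabla|^sP_Nu\|_{L^p}$, so one must sum the shell-by-shell and block-by-block estimates while retaining the square function and then recombine via Minkowski's inequality, using $q\ge p$ to absorb the $L^p$-versus-$L^q$ and $\ell^2$-versus-$\ell^q$ mismatches; it is precisely in this bookkeeping that borderline configurations would produce logarithmic divergences, and the requirement that \emph{at most one} of $p=1$, $p=\infty$, $q=1$, $q=\infty$, $1/p=1/q+s$ holds is exactly what rules them out. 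A cleaner but less self-contained alternative is to recognize the statement as a radial Stein--Weiss / Caffarelli--Kohn--Nirenberg inequality for the Riesz potential $|\nabla|^{-s}$, whose range of admissible weights is strictly enlarged on radial functions, and either cite the corresponding results (Rubin; De N\'apoli--Drelichman--Dur\'an; Tao--Visan--Zhang) or derive them by Stein's complex interpolation between the unweighted Sobolev embedding and the radial endpoint estimate of Strauss/Ni type.
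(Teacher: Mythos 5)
The paper offers no proof of this lemma at all: it is stated with a pointer to \cite{TaViZh} (``see \cite{TaViZh} for example''), so the ``cleaner but less self-contained alternative'' you mention at the end --- cite Rubin / De~N\'apoli--Drelichman--Dur\'an / Tao--Visan--Zhang, or redo the estimate via the explicit radial kernel of $|\nabla|^{-s}$ --- is in fact exactly what the authors do, and would be the proof to emulate.

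Your primary Littlewood--Paley argument is the right general strategy (dyadic shells $|x|\sim R$, frequencies $N$, Bernstein for $NR\lesssim 1$, radial Bessel decay for $NR\gtrsim 1$, scaling to balance exponents), but the reassembly step as you sketch it does not close and contains an error. After discrete Young's inequality in $(\log N,\log R)$ you are left bounding $\big\|\,\big(\||\nabla|^sP_Nu\|_{L^p_x}\big)_N\big\|_{\ell^q}$ by $\||\nabla|^su\|_{L^p}\sim\|(P_N|\nabla|^su)_N\|_{L^p(\ell^2)}$. Minkowski with $q\ge p$ does move the $\ell^q_N$ norm inside the $L^p_x$ norm, but then you need the embedding $\ell^2\hookrightarrow\ell^q$, which requires $q\ge 2$, not $q\ge p$. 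The hypotheses permit $1\le q<2$ --- for instance $d=3$, $p=1$, $q=3/2$, $s=1/2$, $\alpha=1/2$ satisfies every listed condition with only the equality $p=1$ active --- so the claim that ``$q\ge p$ absorbs the $\ell^2$-versus-$\ell^q$ mismatch'' is wrong and leaves a genuine gap in that range. There is also a second, related soft spot: the off-diagonal coefficient from the pointwise radial decay is $(NR)^{d/p-s-(d-1)/2}$ for $NR\gtrsim1$, and for $p$ near $1$ the exponent $d/p-s-(d-1)/2$ can be nonnegative (it equals $3/2$ for the parameters above), so the dyadic sum over $N\gtrsim1/R$ does not converge geometrically; the bound $(N|x|)^{-(d-1)/2}N^{d/p}\|P_Nu\|_{L^p}$ obtained from Hausdorff--Young is simply too lossy there. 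The kernel-representation proofs --- write $u=|\nabla|^{-s}(|\nabla|^su)$, pass to the 1D radial kernel, and run a Schur/weighted-Hardy estimate --- avoid both issues and cover the full stated parameter range, which is why the literature treats the inequality that way rather than by Littlewood--Paley.
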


The second lemma is the following fractional Leibniz rule, see \cite{BoLi-KatoPonce, KePoVe-CPAM-1993, Li-KatoPonce} and references therein.
\begin{lem}\label{lem:Frac_Leibniz}
Let $0<s<1$, $1<p\le \infty$, and $1<p_1,p_2,p_3, p_4 \le \infty$ with $\frac1p=\frac1{p_1}+\frac1{p_2}$, $\frac1p=\frac1{p_3}+\frac1{p_4}$, and let $f,g\in \mathcal S(\R^d)$,  then
\begin{align*}
\big\||\nabla|^s(fg)\big\|_{L^p}\lesssim \big\||\nabla|^sf\big\|_{L^{p_1}}\|g\|_{L^{p_2}}+ \big\||\nabla|^sg\big\|_{L^{p_3}}\|f\|_{L^{p_4}}.
\end{align*}
\end{lem}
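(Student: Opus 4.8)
The plan is to prove the fractional Leibniz rule via Bony's paraproduct decomposition, reducing everything to standard Littlewood--Paley machinery. Throughout, $\Delta_k$ denotes the Littlewood--Paley projection onto $\{|\xi|\sim 2^k\}$, $S_k$ the projection onto $\{|\xi|\lesssim 2^k\}$, $P_m$ a smooth frequency cutoff to $\{|\xi|\sim 2^m\}$, and $M$ the Hardy--Littlewood maximal operator. The ingredients I would use, all classical for $1<r<\infty$ (resp. $1<r\le\infty$ for the maximal bounds), are: (i) the homogeneous Triebel--Lizorkin characterization $\||\nabla|^s h\|_{L^r}\sim\big\|\big(\sum_k 4^{ks}|\Delta_k h|^2\big)^{1/2}\big\|_{L^r}$ and, more generally, $\||\nabla|^s\big(\sum_k u_k\big)\|_{L^r}\lesssim\big\|\big(\sum_k 4^{ks}|u_k|^2\big)^{1/2}\big\|_{L^r}$ whenever $\widehat{u_k}$ is supported in $\{|\xi|\sim 2^k\}$; (ii) the pointwise dominations $\sup_k|S_k h|+\sup_k|\Delta_k h|+\sup_m|P_m h|\lesssim Mh$ and $\|Mh\|_{L^r}\lesssim\|h\|_{L^r}$; and (iii) the Fefferman--Stein vector-valued maximal inequality $\big\|\big(\sum_k|Mh_k|^2\big)^{1/2}\big\|_{L^r}\lesssim\big\|\big(\sum_k|h_k|^2\big)^{1/2}\big\|_{L^r}$.

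First I would split $fg=\Pi_1+\Pi_2+\Pi_3$ into the low--high, high--low and resonant paraproducts $\Pi_1=\sum_k(S_{k-2}f)(\Delta_k g)$, $\Pi_2=\sum_k(S_{k-2}g)(\Delta_k f)$, $\Pi_3=\sum_{|j-k|\le 2}(\Delta_j f)(\Delta_k g)$. By the symmetry $f\leftrightarrow g$ it suffices to estimate $\Pi_1$, which will contribute $\|f\|_{L^{p_4}}\||\nabla|^s g\|_{L^{p_3}}$ --- the term $\Pi_2$ being handled identically with the roles of $f$ and $g$ interchanged to give $\|g\|_{L^{p_2}}\||\nabla|^s f\|_{L^{p_1}}$ --- together with the resonant term $\Pi_3$.

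For $\Pi_1$, each summand has Fourier support in $\{|\xi|\sim 2^k\}$, so ingredient (i) gives
\[
\big\||\nabla|^s\Pi_1\big\|_{L^p}\lesssim\left\|\left(\sum_k 4^{ks}|S_{k-2}f|^2|\Delta_k g|^2\right)^{1/2}\right\|_{L^p}\le\left\|\big(\sup_k|S_{k-2}f|\big)\left(\sum_k 4^{ks}|\Delta_k g|^2\right)^{1/2}\right\|_{L^p},
\]
after which H\"older with $\tfrac1p=\tfrac1{p_4}+\tfrac1{p_3}$, ingredient (ii) at exponent $p_4$, and ingredient (i) at exponent $p_3$ yield the claimed bound. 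For the resonant term, write $\Pi_3=\sum_k(\Delta_k f)(\widetilde\Delta_k g)$ with $\widetilde\Delta_k=\sum_{|j-k|\le 2}\Delta_j$; now $(\Delta_k f)(\widetilde\Delta_k g)$ is only localized to $\{|\xi|\lesssim 2^k\}$, so I reproject, $P_m\Pi_3=\sum_{k\ge m-c}P_m\big[(\Delta_k f)(\widetilde\Delta_k g)\big]$, which by (ii) gives $2^{ms}|P_m\Pi_3|\lesssim\sum_{k\ge m-c}2^{(m-k)s}\,2^{ks}M\big[(\Delta_k f)(\widetilde\Delta_k g)\big]$. Since $s>0$ (the only place this hypothesis enters; $s<1$ plays no role), the weights $2^{(m-k)s}$ are summable in $k$ over $k\ge m-c$, so Cauchy--Schwarz in $k$ followed by summation of the resulting geometric series in $m$ yields
\[
\sum_m 4^{ms}|P_m\Pi_3|^2\lesssim\sum_k 4^{ks}\big(M\big[(\Delta_k f)(\widetilde\Delta_k g)\big]\big)^2.
\]
Applying (i), then (iii), then the pointwise bound $|\widetilde\Delta_k g|\lesssim Mg$ with H\"older at $\tfrac1p=\tfrac1{p_2}+\tfrac1{p_1}$, and finally (ii) and (i) once more, bounds $\big\||\nabla|^s\Pi_3\big\|_{L^p}$ by $\|g\|_{L^{p_2}}\||\nabla|^s f\|_{L^{p_1}}$. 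Summing the three pieces proves the lemma whenever $1<p,p_1,p_2,p_3,p_4<\infty$.

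The main obstacle is the endpoint exponents allowed in the statement. The characterization (i) and the square functions in (iii) break down at exponent $\infty$, so when one of the $p_i$ equals $\infty$ --- and a fortiori when $p=\infty$, which forces $p_1=p_2=p_3=p_4=\infty$ --- the argument above must be reworked inside the homogeneous Triebel--Lizorkin space $\dot F^s_{\infty,2}$, which is strictly finer than $L^\infty$, using the corresponding endpoint (Carleson-type) square-function estimates. This is exactly the delicate content carried out by Bourgain--Li \cite{BoLi-KatoPonce} and Dong Li \cite{Li-KatoPonce}, and I would invoke their results for those borderline configurations rather than reprove them; all the remaining cases are routine Littlewood--Paley analysis as sketched above.
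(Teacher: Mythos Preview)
Your proof is correct and follows the standard paraproduct route to the fractional Leibniz rule. Note, however, that the paper does not prove this lemma at all: it is stated as a known result with a pointer to \cite{BoLi-KatoPonce, KePoVe-CPAM-1993, Li-KatoPonce}, and no argument is given. What you have written is essentially the outline of the proof contained in those references --- Bony decomposition, square-function bounds for the low--high and high--low paraproducts, the geometric-series trick (using $s>0$) for the resonant piece, and the delicate $\dot F^s_{\infty,2}$ analysis of \cite{BoLi-KatoPonce, Li-KatoPonce} at the endpoints --- so there is no genuine difference in approach to compare; you have simply supplied what the paper chose to cite.
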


Consequently, we have the  following elementary inequality, one can see  \cite{BDSW} for the proof.
\begin{lem}\label{lem:frac_Hs}
For any $a>0, 1<p\le \infty, 0\le \gamma<\frac dp$, and $ |\nabla|^\gamma g\in L^p(\R^d)$,
\begin{align}
\big\||\nabla|^\gamma\big(\chi_{\le a}g\big)\big\|_{L^p(\R^d)}\lesssim \big\||\nabla|^\gamma g\big\|_{L^p(\R^d)}. \label{15.37}
\end{align}
Here the implicit constant is independent on $a$.
The same estimate holds for $\chi_{\ge a}g$.
\end{lem}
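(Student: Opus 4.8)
The final statement is Lemma \ref{lem:frac_Hs}, an elementary inequality asserting that the fractional derivative $|\nabla|^\gamma$ is bounded on $L^p(\R^d)$ when composed with a smooth cutoff $\chi_{\le a}$ (or $\chi_{\ge a}$), uniformly in $a>0$. Although the excerpt says the proof is in \cite{BDSW}, I outline the natural argument.

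\textbf{Plan.} First I would reduce to the case $a=1$ by scaling. Indeed, writing $g_a(x)=g(ax)$, one has $\chi_{\le a}(x)g(x) = (\chi_{\le 1}g_{1/a})(x/a)$ up to the obvious change of variables, and both sides of \eqref{15.37} scale the same way under $x\mapsto ax$ because $|\nabla|^\gamma$ has a definite homogeneity; this is exactly why the constant is independent of $a$. So it suffices to prove $\big\||\nabla|^\gamma(\chi_{\le 1}g)\big\|_{L^p}\lesssim \big\||\nabla|^\gamma g\big\|_{L^p}$ for $0\le\gamma<d/p$.

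\textbf{Key steps.} For $\gamma=0$ the statement is trivial since $\chi_{\le 1}$ is bounded. For $0<\gamma<1$, I would apply the fractional Leibniz rule (Lemma \ref{lem:Frac_Leibniz}) with the splitting $f=\chi_{\le 1}$, $g=g$: choosing exponents $\frac1p=\frac1{p_1}+\frac1{p_2}$ with $p_1$ chosen so that Sobolev embedding (applied to $|\nabla|^\gamma g$, or rather to the homogeneous version) lets $\||\nabla|^\gamma(\chi_{\le1})\|_{L^{p_1}}\|g\|_{L^{p_2}}$ be controlled, together with the other term $\||\nabla|^\gamma g\|_{L^p}\|\chi_{\le1}\|_{L^\infty}$. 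The condition $\gamma<d/p$ is precisely what is needed to run the Hardy–Sobolev / Hardy's inequality step, $\|g\|_{L^{p_2}} \lesssim \||\nabla|^\gamma g\|_{L^p}$ with $\frac1{p_2}=\frac1p-\frac\gamma d$, while $\||\nabla|^\gamma \chi_{\le1}\|_{L^{p_1}}$ is finite because $\chi_{\le1}$ is smooth and compactly supported (so $|\nabla|^\gamma\chi_{\le1}$ decays like $|x|^{-d-\gamma}$ at infinity, hence lies in every $L^r$ with $r>d/(d+\gamma)$). For general $\gamma\ge1$, I would write $\gamma=m+\sigma$ with $m\in\N$, $0\le\sigma<1$, expand $|\nabla|^\gamma(\chi_{\le1}g) = |\nabla|^\sigma\nabla^m(\chi_{\le1}g)$, use the ordinary Leibniz rule to distribute $\nabla^m$, and then handle each resulting term $|\nabla|^\sigma\big((\nabla^j\chi_{\le1})(\nabla^{m-j}g)\big)$ by the same fractional-Leibniz-plus-Hardy scheme, at the end converting back to $\||\nabla|^\gamma g\|_{L^p}$ via the fact that for functions with $|\nabla|^\gamma g\in L^p$ one controls all lower-order pieces $\||\nabla|^{\gamma'}g\|_{L^{p'}}$ with the matching Sobolev scaling (again using $\gamma<d/p$). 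The case $\chi_{\ge a}g = g - \chi_{\le a}g$ follows immediately by the triangle inequality.

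\textbf{Main obstacle.} The only genuine subtlety is bookkeeping the exponents so that every intermediate $L^{p_i}$ norm of $g$ (or a derivative of $g$) is in fact controlled by $\||\nabla|^\gamma g\|_{L^p}$ alone — this is where the hypothesis $0\le\gamma<d/p$ is used in an essential way, since it guarantees the relevant Sobolev/Hardy embeddings are in the admissible range and no endpoint issues arise. Everything else is a routine application of Lemmas \ref{lem:Frac_Leibniz} and the Sobolev/Hardy inequalities together with the smoothness and compact support (or its complement) of the cutoff, and the uniformity in $a$ is automatic from scaling.
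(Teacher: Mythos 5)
Your proof is correct. The paper itself gives no proof of this lemma — it only cites \cite{BDSW} — so there is no in-paper argument to compare against; but the route you take (scale to $a=1$; fractional Leibniz together with the homogeneous Sobolev embedding $\|g\|_{L^{p_2}}\lesssim\||\nabla|^\gamma g\|_{L^p}$ with $\frac{1}{p_2}=\frac{1}{p}-\frac{\gamma}{d}$, which is precisely where the hypothesis $\gamma<d/p$ enters; the integrability $|\nabla|^\gamma\chi_{\le1}\in L^{d/\gamma}$; and then $\chi_{\ge a}=1-\chi_{\le a}$ by the triangle inequality) is the standard one and is sound. Two minor points worth tidying. With your convention $g_a(x)=g(ax)$, the scaling identity should read $\chi_{\le a}(x)g(x)=(\chi_{\le 1}g_a)(x/a)$, not $g_{1/a}$. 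In the case $\gamma=m+\sigma\ge 1$, $|\nabla|^\gamma$ is not literally $|\nabla|^\sigma\nabla^m$; one should pass through the equivalence $\||\nabla|^\gamma h\|_{L^p}\sim\|\nabla^m|\nabla|^\sigma h\|_{L^p}$ valid for $1<p<\infty$ (Riesz transforms), a range that is automatic here since $\gamma>0$ forces $p<d/\gamma<\infty$. Once that is in place, your bookkeeping for the pieces $(\nabla^j\chi_{\le 1})(\nabla^{m-j}g)$ with $j\ge 1$ closes exactly as you indicate, by pairing $\nabla^{m-j}g$ (and likewise $|\nabla|^\sigma\nabla^{m-j}g$) with the Sobolev exponent matching $\gamma<d/p$, so every intermediate exponent stays admissible. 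Neither remark affects the validity of the argument.
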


We need the following mismatch result, which is  helpful in commuting the spatial and the frequency cutoffs.
\begin{lem}[Mismatch estimates,  \cite{LiZh-APDE}]\label{lem:mismatch}
Let $\phi_1$ and $\phi_2$ be smooth functions obeying
$$
|\phi_j| \leq 1 \quad \mbox{ and }\quad \mbox{dist}(\emph{supp}
\phi_1,\, \emph{supp} \phi_2 ) \geq A,
$$
for some large constant $A$.  Then for $\sigma>0$, $M\le 1$ and $1\leq r\leq
q\leq \infty$,
\begin{align}
\bigl\| \phi_1 |\nabla|^\sigma P_{\leq M} (\phi_2 f)
\bigr\|_{L^q_x(\R^d)}
    &+ \bigl\| \phi_1 \nabla |\nabla|^{\sigma-1} P_{\leq M} (\phi_2 f) \bigr\|_{L^q_x(\R^d)}
    \lesssim A^{-\sigma-\frac dr + \frac dq} \|\phi_2 f\|_{L^r_x(\R^d)};\label{eqs:lem-mismath-1}\\
    \bigl\| \phi_1   P_{\leq M} (\phi_2 f)
\bigr\|_{L^q_x(\R^d)}&\lesssim_m M^{-m}A^{-m}\|f\|_{L^q_x(\R^d)}, \mbox{
for any } m\ge0.\label{eqs:lem-mismath-2}
\end{align}
\end{lem}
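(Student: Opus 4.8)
The plan is to reduce both inequalities to pointwise estimates on the convolution kernels of the operators involved, using only the hypothesis that these kernels must travel a distance at least $A$ from $\supp\phi_2$ to $\supp\phi_1$. First I would write $K$ for the kernel of $|\nabla|^\sigma P_{\le M}$, so that $\widehat K(\xi)=|\xi|^\sigma\chi_{\le M}(\xi)$, and establish the bound
$$|K(x)|\lesssim M^{d+\sigma}\big(1+M|x|\big)^{-d-\sigma}\lesssim |x|^{-d-\sigma}\qquad(x\ne 0).$$
This follows by rescaling to $M=1$ and decomposing the symbol $|\xi|^\sigma\chi_{\le 1}(\xi)$ into Littlewood--Paley pieces $m_j$, $j\le 0$, each smooth, supported in $|\xi|\sim 2^j$, with $|\partial^\alpha m_j|\lesssim 2^{j(\sigma-|\alpha|)}$: integration by parts gives $|\check m_j(x)|\lesssim_L 2^{j(d+\sigma)}(1+2^j|x|)^{-L}$ for every $L$, and summing the resulting geometric-type series in $j$ yields the stated polynomial decay of order $d+\sigma$ (the part of $\chi_{\le 1}$ supported near $|\xi|\sim 1$ is treated the same way and decays faster). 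The operator $\na|\nabla|^{\sigma-1}P_{\le M}$ has symbol $i\xi|\xi|^{\sigma-1}\chi_{\le M}(\xi)$, which is again homogeneous of degree $\sigma$ near the origin, so its kernel obeys the same bound. Finally, the kernel $\kappa_M$ of $P_{\le M}$ alone is $M^d$ times a fixed Schwartz function dilated by $M$, hence $|\kappa_M(x)|\lesssim_L M^d(1+M|x|)^{-L}$ for every $L$.

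Next I would convert these kernel bounds into the stated estimates via Young's convolution inequality. For $x\in\supp\phi_1$ one has $|\phi_1(x)|\le 1$, while the $y$-integration defining $|\nabla|^\sigma P_{\le M}(\phi_2 f)(x)$ runs over $\supp\phi_2$, where $|x-y|\ge A$; hence $\big|\phi_1\,|\nabla|^\sigma P_{\le M}(\phi_2 f)\big|\le \big(\mathbb I_{|\cdot|\ge A}|K|\big)*|\phi_2 f|$ pointwise. Defining $s$ by $1+1/q=1/s+1/r$ — so that $1/s=1+1/q-1/r\le 1<1+\sigma/d$, using $r\le q$ and $\sigma>0$ — the bound on $K$ gives $\|\mathbb I_{|\cdot|\ge A}K\|_{L^s}\lesssim\big(\int_{|x|\ge A}|x|^{-s(d+\sigma)}\,dx\big)^{1/s}\sim A^{\frac ds-(d+\sigma)}=A^{-\sigma-\frac dr+\frac dq}$, the integral being finite precisely because $s(d+\sigma)>d$; Young's inequality then yields \eqref{eqs:lem-mismath-1}, and the $\na|\nabla|^{\sigma-1}P_{\le M}$ term is identical. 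For \eqref{eqs:lem-mismath-2} the same pointwise reduction with $r=q$ applies, but now for any $m\ge 0$ I would take $L=m+d+1$ and use $|x-y|\ge A$ to write $|\kappa_M(x-y)|\lesssim (M|x-y|)^{-m}M^d(1+M|x-y|)^{-d-1}\le (MA)^{-m}M^d(1+M|x-y|)^{-d-1}$; since $M^d(1+M|\cdot|)^{-d-1}\in L^1$ with norm $\lesssim 1$, convolution gives $\|\phi_1 P_{\le M}(\phi_2 f)\|_{L^q}\lesssim (MA)^{-m}\|\phi_2 f\|_{L^q}\le M^{-m}A^{-m}\|f\|_{L^q}$.

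The one delicate point is the kernel bound in the first step: because $|\xi|^\sigma$ is not smooth at the origin, $K$ fails to be Schwartz and decays only like $|x|^{-d-\sigma}$, so rapid decay cannot simply be quoted. The precise exponent $d+\sigma$ is essential — it is exactly what makes $\mathbb I_{|\cdot|\ge A}K\in L^s$ in the relevant range and produces the sharp power $A^{-\sigma-\frac dr+\frac dq}$ — and it comes from the homogeneity of $|\xi|^\sigma$ near $\xi=0$ together with the dyadic summation; the assumptions $r\le q$ and $\sigma>0$ enter only to keep $1/s<1+\sigma/d$, so that this slowly decaying tail is still $L^s$-integrable. Everything else is routine.
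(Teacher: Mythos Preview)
Your proof is correct. The paper does not actually prove this lemma; it merely states it and cites \cite{LiZh-APDE}, so there is no ``paper's own proof'' to compare against. Your argument---deriving the kernel bound $|K(x)|\lesssim M^{d+\sigma}(1+M|x|)^{-d-\sigma}$ via dyadic decomposition of the singular symbol $|\xi|^\sigma\chi_{\le M}(\xi)$, then applying Young's inequality to the truncated kernel $\mathbb I_{|\cdot|\ge A}K$---is the standard one and is essentially what one finds in \cite{LiZh-APDE}. The verification that $1/s=1+1/q-1/r\le 1<1+\sigma/d$ guarantees the $L^s$-integrability of the tail, and the resulting exponent $d/s-(d+\sigma)=-\sigma-d/r+d/q$, are both handled cleanly. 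For \eqref{eqs:lem-mismath-2} your splitting $(1+M|x-y|)^{-m-d-1}\le (M|x-y|)^{-m}(1+M|x-y|)^{-d-1}$ followed by $|x-y|\ge A$ is exactly right, and the remaining $L^1$ norm of $M^d(1+M|\cdot|)^{-d-1}$ is uniformly bounded by a change of variables.
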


Furthermore, we also need the following Leibnitz-type formula, which is proved in \cite{BDSW}.
\begin{lem}\label{lem:muli-Lei-formula}
Let  $f\in \mathcal S(\R^d)^d$ be a vector-valued function and  $g\in \mathcal S(\R^d)$ be a scalar function. Then for any integer $N$,
\begin{equation*}
\nabla_{\xi}\cdot \big(f\>\nabla_{\xi} \big)^{N-1}\cdot
(fg)=\sum\limits_{\begin{subarray}{c}
l_1,\cdots,l_N\in\R^d,l'\in\R^d;\\
|l_j|\le
j;|l_1|+\cdots+|l_N|+|l'|=N
\end{subarray}}
C_{l_1,\cdots,l_N,l'}\partial_\xi^{l_1}f\cdots
\partial_\xi^{l_N}f\>\partial_\xi^{l'}g,
\end{equation*}
where we have used the notations
$$
\nabla_{\xi}=\{\partial_{\xi_1},\cdots,\partial_{\xi_d}\};\quad
\partial_\xi^{l}=\partial_{\xi_1}^{l^1}\cdots\partial_{\xi_d}^{l^d}, \mbox{ for any } l=\{l^1,\cdots,l^d\}\in \R^d.
$$
\end{lem}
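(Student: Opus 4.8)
The plan is to prove the identity by induction on $N$, after first rewriting the left-hand side as the $N$-th iterate of a single scalar operator. Interpreting $f\nabla_\xi$ as the operator $v\mapsto f\,(\nabla_\xi\cdot v)$ acting on vector fields (as the statement forces, since $f$ is vector-valued and $fg$ is a vector), I set $Th:=\nabla_\xi\cdot(fh)=(\nabla_\xi\cdot f)\,h+f\cdot\nabla_\xi h$ for scalar $h$. I would first check, by a one-line induction on $k$, that $(f\nabla_\xi)^k(fg)=f\,(T^k g)$: this is clear for $k=0$, and $(f\nabla_\xi)\bigl(f\,(T^{k-1}g)\bigr)=f\,\nabla_\xi\cdot\bigl(f\,(T^{k-1}g)\bigr)=f\,(T^k g)$. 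Taking one further divergence,
$$
\nabla_\xi\cdot(f\nabla_\xi)^{N-1}(fg)=\nabla_\xi\cdot\bigl(f\,(T^{N-1}g)\bigr)=T^N g,
$$
so it suffices to show that $T^N g$ is a linear combination of monomials $\partial_\xi^{l_1}f\cdots\partial_\xi^{l_N}f\,\partial_\xi^{l'}g$ with $|l_j|\le j$ for $1\le j\le N$ and $|l_1|+\cdots+|l_N|+|l'|=N$. The precise way the vector indices contract is dictated by the actions of $\nabla_\xi\cdot$ and $f\cdot\nabla_\xi$ and plays no role here; only the orders $|l_j|,|l'|$ enter the claimed constraints, and everything else is absorbed into $C_{l_1,\dots,l_N,l'}$.

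For $N=1$ this is exactly $Tg=(\nabla_\xi\cdot f)g+f\cdot\nabla_\xi g$, i.e.\ the two contributions $(|l_1|,|l'|)=(1,0)$ and $(0,1)$. For the inductive step I would assume $T^{N-1}g=\sum C\,\partial_\xi^{m_1}f\cdots\partial_\xi^{m_{N-1}}f\,\partial_\xi^{m'}g$ with $|m_j|\le j$ for $1\le j\le N-1$ and $|m_1|+\cdots+|m_{N-1}|+|m'|=N-1$, and apply $T=(\nabla_\xi\cdot f)+f\cdot\nabla_\xi$ monomial by monomial. The multiplicative piece $\nabla_\xi\cdot f$ contributes one new factor of order $1$, which I put in slot $1$, while pushing each old $\partial_\xi^{m_j}f$ into slot $j+1$ (legal since $|m_j|\le j\le j+1$); the total order rises to $1+(N-1)=N$. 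The differential piece $f\cdot\nabla_\xi$, after the Leibniz rule, contributes a new undifferentiated factor $f$ of order $0$ (placed in slot $1$) together with either $g$ carrying one extra derivative (order $|m'|+1$) or exactly one old factor $\partial_\xi^{m_i}f$ replaced by a derivative of it of order $|m_i|+1\le i+1$; again the orders sum to $N$.

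The only point requiring real care — and the one I expect to be the crux — is to verify that the constraint $|l_j|\le j$ survives the bump in the differential piece, i.e.\ that the multiset of orders of the $N$ resulting $f$-factors, $\{0\}\cup\{|m_j|:j\ne i\}\cup\{|m_i|+1\}$, can be enumerated as $|l_1|\le\cdots\le|l_N|$ with $|l_j|\le j$. This is equivalent to the Hall-type condition that for every $t$ with $0\le t\le N$ the number of these orders exceeding $t$ is at most $N-t$. The order-$0$ factor never contributes; since $|m_j|\le j$ one has $|m_j|>t\Rightarrow j>t$, and $|m_i|+1>t\Rightarrow |m_i|\ge t\Rightarrow i\ge t$; counting the admissible indices $j\in\{1,\dots,N-1\}\setminus\{i\}$ with $j>t$ together with the possibly-large bumped factor gives at most $(N-1-t)^{+}+\mathbf 1_{\{i=t\}}\le N-t$ for $0\le t\le N-1$, while for $t=N$ no order can exceed $N$ because all orders are nonnegative and sum to at most $N$. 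This closes the induction; the branch where the derivative lands on $g$ is handled by the same slot-shift (new $f$ in slot $1$, old $\partial_\xi^{m_j}f$ in slot $j+1$), with $|l'|$ equal to $|m'|$ or $|m'|+1$ and the total order still $N$.
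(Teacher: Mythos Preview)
Your argument is correct. The paper itself does not prove this lemma in the text --- it is quoted from the authors' earlier work \cite{BDSW} --- so there is no in-paper proof to compare against. Your reduction to the iterated scalar operator $Th=\nabla_\xi\cdot(fh)$ via $(f\nabla_\xi)^k(fg)=f\,T^kg$ is the natural reading of the expression, and it matches exactly how the lemma is used later in the paper (Lemmas~\ref{lem:Part-I} and \ref{lem:g-1-short}), where the operator arises from repeated integration by parts of $e^{i\phi}$ against the identity $e^{i\phi}=\nabla e^{i\phi}\cdot\frac{\nabla\phi}{i|\nabla\phi|^2}$.

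One simplification: the Hall-type counting you flag as ``the crux'' is not actually needed. In the differential branch where $f\cdot\nabla_\xi$ hits $\partial_\xi^{m_i}f$, the \emph{same} slot-shift you already used for the multiplicative branch works verbatim: put the new undifferentiated $f$ (order $0$) in slot $1$ and shift every old $\partial_\xi^{m_j}f$ into slot $j+1$, including the bumped one. The bumped factor then sits in slot $i+1$ with order $|m_i|+1\le i+1$, so the constraint $|l_j|\le j$ holds on the nose and no enumeration argument is required. (As written, your intermediate bound $(N-1-t)^{+}+\mathbf 1_{\{i=t\}}$ is not literally an upper bound when $i>t$ --- the bumped factor can still exceed $t$ there --- although the conclusion $\le N-t$ survives because removing $i$ from $\{t+1,\dots,N-1\}$ compensates. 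With the direct slot-shift this wrinkle disappears.)
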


\subsection{Linear Schr\"odinger operator}

Let the operator $S(t)=e^{it\Delta}$ be the linear Schr\"odinger flow, that is,
$$
(i\partial_t+\Delta)S(t)\equiv 0.
$$
The following are some fundamental properties of the operator $e^{it\Delta}$. The first is the explicit formula, see for example \cite{Cazenave-book}.
\begin{lem}\label{lem:formula-St}
For all $\phi\in \mathcal S(\R^d)$, $t\neq 0$,
$$
S(t)\phi(x)=\frac{1}{(4\pi it)^{\frac d2}}\int_{\R^d} e^{\frac{i|x-y|^2}{4t}}\phi(y)\,dy.
$$
Moreover, for any $r\ge2$,
$$
\|S(t)\phi\|_{L^r_x(\R^d)}\lesssim |t|^{-d(\frac12-\frac1r)}\|\phi\|_{L^{r'}(\R^d)}.
$$
\end{lem}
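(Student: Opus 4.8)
The plan is to first establish the explicit kernel representation by identifying the Fourier symbol of $S(t)$ with a regularized complex Gaussian, and then to deduce the $L^{r'}\to L^{r}$ bound by interpolating the resulting $L^1\to L^\infty$ dispersive estimate against the $L^2\to L^2$ unitarity of $S(t)$.

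\textbf{Step 1: the kernel formula.} Since $S(t)$ is the Fourier multiplier with symbol $e^{-it|\xi|^2}$, for $\phi\in\mathcal S(\R^d)$ one has $S(t)\phi=K_t*\phi$, where $K_t$ is the inverse Fourier transform of $\xi\mapsto e^{-it|\xi|^2}$ in $\mathcal S'(\R^d)$. To compute $K_t$ I would introduce the regularization: for $\e>0$,
\[
G_{\e}(x):=\frac{1}{(2\pi)^d}\int_{\R^d}e^{-(\e+it)|\xi|^2}e^{ix\cdot\xi}\,d\xi,
\]
an absolutely convergent integral. Completing the square and shifting the contour in each of the $d$ variables — legitimate because $\re(\e+it)>0$ so that the integrand decays — reduces this to the real Gaussian integral and gives
\[
G_{\e}(x)=\frac{1}{\big(4\pi(\e+it)\big)^{d/2}}\,e^{-\frac{|x|^2}{4(\e+it)}},
\]
with the branch of $z^{d/2}$ that is positive for $z>0$. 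One then checks that $G_{\e}\to K_t$ in $\mathcal S'(\R^d)$ as $\e\downarrow 0$ (pair with $\hat\psi$, $\psi\in\mathcal S(\R^d)$, and apply dominated convergence on the frequency side), obtaining $K_t(x)=(4\pi it)^{-d/2}e^{i|x|^2/(4t)}$ and hence the stated formula after convolving with $\phi$. Reducing to $d=1$ first via $e^{-z|\xi|^2}=\prod_{j}e^{-z\xi_j^2}$ is an equivalent route.

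\textbf{Step 2: the dispersive bound.} From the kernel formula and $\big|e^{i|x-y|^2/(4t)}\big|=1$ we get $\|S(t)\phi\|_{L^\infty_x}\le(4\pi|t|)^{-d/2}\|\phi\|_{L^1_x}$, i.e. $S(t)$ maps $L^1\to L^\infty$ with norm $\lesssim|t|^{-d/2}$; and since $|e^{-it|\xi|^2}|=1$, Plancherel gives $\|S(t)\phi\|_{L^2_x}=\|\phi\|_{L^2_x}$. For $2\le r\le\infty$ set $\theta=1-\tfrac2r\in[0,1]$, so that $\tfrac1r=\tfrac{1-\theta}{2}$ and $\tfrac1{r'}=\tfrac{1-\theta}{2}+\theta$; Riesz--Thorin interpolation between these two endpoints then yields
\[
\|S(t)\phi\|_{L^r_x}\lesssim |t|^{-\frac{d\theta}{2}}\|\phi\|_{L^{r'}_x}=|t|^{-d(\frac12-\frac1r)}\|\phi\|_{L^{r'}_x},
\]
and density of $\mathcal S(\R^d)$ in $L^{r'}(\R^d)$ extends this to all such $\phi$.

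\textbf{Main obstacle.} The statement is classical, so there is no essential difficulty; the one point that requires care is the oscillatory Gaussian integral in Step 1 — it is not absolutely convergent, so the computation must be carried out with the $\e$-regularization (equivalently, by analytic continuation of the heat kernel from $z>0$ to $z=it$) and the limit $\e\downarrow 0$ justified in $\mathcal S'(\R^d)$, including the correct choice of branch for $(4\pi it)^{d/2}$. The remaining ingredients (completing the square, Plancherel, Riesz--Thorin) are routine.
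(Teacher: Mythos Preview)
Your proof is correct and follows the standard textbook argument. Note that the paper itself does not give a proof of this lemma: it simply cites Cazenave's book \cite{Cazenave-book}, treating the result as classical. Your regularization of the oscillatory Gaussian followed by Riesz--Thorin interpolation is precisely the argument one finds there, so there is nothing further to compare.
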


The following is the  standard Strichartz estimate, see for example \cite{KeTa-Strichartz}.
\begin{lem}\label{lem:strichartz}
Let $I$ be a compact time interval and let $u: I\times\R^d \to
\mathbb \R$ be a solution to the inhomogeneous Schr\"odinger equation
$$
iu_{t}- \Delta u + F = 0.
$$
Then for any $t_0\in I$,  any pairs $(q_j,r_j), j=1,2$ satisfying
$$
 q_j\ge 2, \,\, r_j\ge 2,\,\,  \mbox{and }\,\, \frac2{q_j}+\frac d{r_j}=\frac d2,
$$
the following estimates hold,
\begin{align*}
\bigl\|u\bigr\|_{C(I;L^2(\R^d))}+\big\|u\big\|_{L^{q_1}_tL^{r_1}_x(I\times\R^d)}
\lesssim \bigl\|u(t_0)\bigr\|_{L^2_x(\R^d)}+
\bigl\|F\bigr\|_{L^{q_2'}_tL^{r_2'}_x(I\times\R^d)}.
\end{align*}
\end{lem}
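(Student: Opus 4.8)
The plan is to deduce this from the two elementary bounds on $S(t)=e^{it\De}$ recorded in Lemma \ref{lem:formula-St}: the $L^2$ isometry $\|S(t)\phi\|_{L^2_x}=\|\phi\|_{L^2_x}$ and the dispersive bound $\|S(t)\phi\|_{L^\infty_x}\lesssim|t|^{-d/2}\|\phi\|_{L^1_x}$, which interpolate to $\|S(t)\phi\|_{L^r_x}\lesssim|t|^{-d(\frac12-\frac1r)}\|\phi\|_{L^{r'}_x}$ for $2\le r\le\infty$. Writing Duhamel's formula $u(t)=S(t-t_0)u(t_0)-i\int_{t_0}^t S(t-s)F(s)\,ds$, it suffices to establish: (i) the homogeneous estimate $\|S(\cdot)\phi\|_{L^q_tL^r_x(\R\times\R^d)}\lesssim\|\phi\|_{L^2_x}$ for every admissible pair $(q,r)$; (ii) its dual $\big\|\int_\R S(-s)G(s)\,ds\big\|_{L^2_x}\lesssim\|G\|_{L^{q'}_tL^{r'}_x}$; (iii) the retarded inhomogeneous bound for the map $F\mapsto\int_{t_0}^tS(t-s)F(s)\,ds$; and (iv) the $C(I;L^2_x)$ endpoint of the left side.

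For (i) I would run the $TT^*$ argument. With $T\phi=S(\cdot)\phi$, the operator $TT^*$ acts by $G\mapsto\int_\R S(\cdot-s)G(s)\,ds$, so the claim reduces to $\big\|\int_\R S(t-s)G(s)\,ds\big\|_{L^q_tL^r_x}\lesssim\|G\|_{L^{q'}_tL^{r'}_x}$. Away from the endpoint $q=2$, I would insert the dispersive bound $\|S(t-s)G(s)\|_{L^r_x}\lesssim|t-s|^{-d(\frac12-\frac1r)}\|G(s)\|_{L^{r'}_x}$, apply Minkowski's inequality, and invoke the one-dimensional Hardy--Littlewood--Sobolev inequality in $t$; the admissibility relation $\frac2q+\frac dr=\frac d2$ is exactly what makes the exponents close. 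At the endpoint $(q,r)=(2,\frac{2d}{d-2})$ ($d\ge3$) this crude estimate fails, and I would instead use the bilinear dyadic argument of Keel--Tao: decompose the $t-s$ integral into dyadic pieces, bound each by interpolating the $L^2\times L^2$ and near-$L^1\times L^\infty$ bilinear estimates, and sum by a real-interpolation/atomic argument. The $TT^*$ lemma then yields (i) and, dually, (ii).

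For (iii), composing the $L^2_x\to L^{q_1}_tL^{r_1}_x$ bound from (i) with the $L^{q_2'}_tL^{r_2'}_x\to L^2_x$ bound from (ii) gives the untruncated estimate $\big\|\int_\R S(t-s)F(s)\,ds\big\|_{L^{q_1}_tL^{r_1}_x}\lesssim\|F\|_{L^{q_2'}_tL^{r_2'}_x}$; the retarded operator is then bounded with the same constant by the Christ--Kiselev lemma, which applies because $q_1>q_2'$ whenever the two admissible pairs are not both the double endpoint (since $q_j\ge2$ forces $q_2'\le2\le q_1$). The remaining double-endpoint case $q_1=q_2'=2$ I would treat directly using the Keel--Tao retarded endpoint estimate. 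Finally, for (iv): the free part of Duhamel is an $L^2_x$ isometry, while $\big\|\int_{t_0}^tS(t-s)F(s)\,ds\big\|_{L^2_x}=\big\|\int_{t_0}^tS(-s)F(s)\,ds\big\|_{L^2_x}\le\sup_\tau\big\|\int_{t_0}^\tau S(-s)F(s)\,ds\big\|_{L^2_x}\lesssim\|F\|_{L^{q_2'}_tL^{r_2'}_x}$ by (ii) together with Christ--Kiselev (the output exponent $\infty$ beats $q_2'<\infty$); strong continuity of $t\mapsto u(t)$ in $L^2_x$ then follows by approximating $u(t_0)$ and $F$ by Schwartz data and using dominated convergence. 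The only genuinely nonroutine point, and the main obstacle, is the endpoint $q=2$ (possible precisely when $d\ge3$): both step (i) at the endpoint and the double-endpoint instance of step (iii) really do require the Keel--Tao machinery --- dyadic decomposition of the propagator plus an atomic/real-interpolation summation --- rather than the soft $TT^*$-plus-Hardy--Littlewood--Sobolev reasoning that covers every non-endpoint pair.
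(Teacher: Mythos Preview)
Your proposal is correct and follows exactly the standard Keel--Tao argument that the paper invokes: the paper does not give its own proof of this lemma but simply cites \cite{KeTa-Strichartz}, and your outline --- $TT^*$ with the dispersive estimate and Hardy--Littlewood--Sobolev for the non-endpoint pairs, the Keel--Tao bilinear/atomic decomposition at the endpoint, and Christ--Kiselev for the retarded inhomogeneous bound --- is precisely that proof. One small simplification in step (iv): you do not need Christ--Kiselev for the $L^\infty_tL^2_x$ bound, since unitarity plus (ii) applied to $F\mathbb I_{[t_0,t]}$ already gives it directly.
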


Further, we need the following  inhomogeneous Strichartz estimate which is not sharp but sufficient for this paper, see for examples \cite{Cazenave-book} and \cite{KeTa-Strichartz}.
\begin{lem}\label{lem:strichartz-inhomog}
Let $I$ be a compact time interval and $t_0\in I$. Assume that $2<r<\frac{2d}{d-2}$ $(2<r\le \infty \; if \; d=1)$, and $1<q,\tilde q<\infty$ satisfy
$$
\frac1q+\frac1{\tilde q}=d(\frac12-\frac1r).
$$
Then the following estimates hold,
\begin{align*}
\Big\|\int_0^t e^{i(t-s)\Delta}F(s)\,ds\Big\|_{L^q_{t}L^r_x(I\times\R^d)}
\lesssim
\bigl\|F\bigr\|_{L^{\tilde{q}'}_tL^{r'}_x(I\times\R^d)}.
\end{align*}
\end{lem}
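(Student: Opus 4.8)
The statement is the classical non-sharp inhomogeneous Strichartz inequality, and the plan is to deduce it directly from the pointwise dispersive decay of $e^{it\Delta}$ in Lemma~\ref{lem:formula-St}, together with one-dimensional fractional integration (Hardy--Littlewood--Sobolev) in the time variable; no $TT^{*}$ argument is needed. Write $\alpha:=d(\tfrac12-\tfrac1r)$, so that the hypothesis becomes $\tfrac1q+\tfrac1{\tilde q}=\alpha$. First I would extend $F$ by zero off $I$, use Minkowski's integral inequality to move the $L^{r}_{x}$ norm inside the $s$-integral, and apply the $L^{r'}_{x}\to L^{r}_{x}$ dispersive bound $\|e^{i\tau\Delta}\phi\|_{L^{r}_{x}}\lesssim|\tau|^{-\alpha}\|\phi\|_{L^{r'}_{x}}$. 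This gives, for each $t\in I$,
\[
\Big\|\int_{0}^{t}e^{i(t-s)\Delta}F(s)\,ds\Big\|_{L^{r}_{x}}
\;\lesssim\;\int_{0}^{t}|t-s|^{-\alpha}\,\|F(s)\|_{L^{r'}_{x}}\,ds
\;\le\;H(t),\qquad H(t):=\int_{\R}|t-s|^{-\alpha}\,G(s)\,ds,
\]
where $G(s):=\mathbb I_{I}(s)\,\|F(s)\|_{L^{r'}_{x}}$; replacing the causal kernel $\mathbb I_{\{s<t\}}|t-s|^{-\alpha}$ by the full kernel is harmless since the former is dominated by the latter pointwise.

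The second step is to take the $L^{q}_{t}(I)$ norm and invoke the Hardy--Littlewood--Sobolev inequality on $\R$ with the kernel $t\mapsto|t|^{-\alpha}$ (which lies in the weak space $L^{1/\alpha,\infty}(\R)$): this yields $\|H\|_{L^{q}_{t}(\R)}\lesssim\|G\|_{L^{m}_{t}(\R)}$ with $m$ defined by $\tfrac1m=\tfrac1q+1-\alpha$. The hypothesis $\tfrac1q+\tfrac1{\tilde q}=\alpha$ is precisely what forces $\tfrac1m=1-\tfrac1{\tilde q}$, i.e. $m=\tilde q'$, so unwinding the definition of $G$ gives the asserted bound $\lesssim\|F\|_{L^{\tilde q'}_{t}L^{r'}_{x}(I\times\R^{d})}$.

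What is left --- and the one place where care is needed --- is to confirm that the HLS step is licit, which is exactly where the stated ranges of $r,q,\tilde q$ are used; this amounts to bookkeeping, so I expect no genuine obstacle. One needs $0<\alpha<1$, so that $1/\alpha\in(1,\infty)$ and the kernel really belongs to a weak Lebesgue space: $\alpha>0$ is equivalent to $r>2$, and $\alpha<1$ follows from $r<\tfrac{2d}{d-2}$ when $d\ge3$ and is automatic when $d\in\{1,2\}$ (there $\alpha=\tfrac12-\tfrac1r<\tfrac12$ for $d=1$ and $\alpha=1-\tfrac2r<1$ for $d=2$). One also needs the HLS exponents to stay off the forbidden endpoints $1$ and $\infty$: indeed $1<q<\infty$ holds by hypothesis, and $m=\tilde q'$ satisfies $1<m<\infty$ exactly because $1<\tilde q<\infty$ (one checks as well that $m<q$, which is automatic since $\alpha<1$). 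Thus every constraint imposed by HLS is one of the stated hypotheses, and the proof closes; the only substantive input was the dispersive estimate of Lemma~\ref{lem:formula-St}, and everything else is soft. (The estimate is deliberately not sharp --- the endpoint and the full admissible range would require the Keel--Tao bilinear argument --- but the non-sharp version obtained this way is all that is used in the paper.)
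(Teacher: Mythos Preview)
Your argument is correct and is precisely the classical proof: dispersive decay plus Hardy--Littlewood--Sobolev in time, with the exponent bookkeeping you carry out. The paper itself does not supply a proof of this lemma at all --- it simply states the estimate and refers to \cite{Cazenave-book} and \cite{KeTa-Strichartz} --- and the argument you give is essentially the one found in Cazenave's book, so there is nothing to contrast.
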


We also need the special Strichartz estimate for radial data, which was first proved in \cite{Shao} and then developed in \cite{CL,GW}.
\begin{lem}[Radial Strichartz estimates]\label{lem:radial-Str}
Let $g\in L^2(\R^d)$ be a radial function, and let the triple $(q,r,\gamma)$ satisfy
\begin{align}
\gamma\in\R, \,\, q\ge 2, \,\, r> 2,\,\, \frac2q+\frac {2d-1}{r}<\frac{2d-1}2,\,\, \mbox{and }\,\, \frac2q+\frac d{r}=\frac d2+\gamma,\label{Str-conditions}
\end{align}
moreover, when $\frac{4d+2}{2d-1}\le q\le \infty$, the penultimate inequality allows equality.
Then
$$
\big\||\nabla|^{\gamma} e^{it\Delta}g\big\|_{L^q_{t}L^r_x(\R\times\R^d)}\lesssim  \big\|g\big\|_{L^2(\R^d)}.
$$
Furthermore, let $F\in L^{\tilde q'}_{t}L^{\tilde r'}_x(\R^{d+1})$ be a radial function in $x$, then
$$
\Big\|\int_0^t e^{i(t-s)\Delta}F(s)\,ds\Big\|_{L^q_{t}L^r_x(\R^{d+1})}+
\Big\||\nabla|^{-\gamma}\int_0^t e^{i(t-s)\Delta}F(s)\,ds\Big\|_{L^\infty_tL^2_{x}(\R^{d+1})}\lesssim \|F\|_{L^{\tilde q'}_{t}L^{\tilde r'}_x(\R^{d+1})},
$$
where the triples  $(q,r,\gamma)$,   $(\tilde q,\tilde r,-\gamma)$ satisfy \eqref{Str-conditions}.
\end{lem}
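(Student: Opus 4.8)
The plan is to reduce the homogeneous bound to a single scale-invariant, frequency-localized estimate, record the portion of the admissible range that already follows from classical (non-symmetric) estimates, and extract the genuinely sharp piece from a weighted dispersive bound for the radial propagator obtained through the Hankel transform; the inhomogeneous estimates then follow by a $TT^*$ argument together with the Christ--Kiselev lemma. For the first reduction, the constraint $\frac2q+\frac dr=\frac d2+\gamma$ makes $\||\nabla|^\gamma e^{it\Delta}g\|_{L^q_tL^r_x}/\|g\|_{L^2}$ invariant under $g\mapsto g(\lambda\,\cdot)$; hence, after a Littlewood--Paley decomposition, the inequality $\big\|(\sum_N|F_N|^2)^{1/2}\big\|_{L^q_tL^r_x}\le\big(\sum_N\|F_N\|_{L^q_tL^r_x}^2\big)^{1/2}$ (valid for $q,r\ge2$) and the $L^2$-almost-orthogonality of $\{e^{it\Delta}P_Ng\}_N$ reduce matters to
\[
\big\|e^{it\Delta}\tilde P_1 g\big\|_{L^q_tL^r_x(\R\times\R^d)}\lesssim\|g\|_{L^2(\R^d)}
\]
for radial $g$ with $\widehat g$ supported in $\{|\xi|\sim1\}$; at this frequency $|\nabla|^\gamma\approx1$, so the derivative plays no further role.

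Next, interpolating the energy identity $\|e^{it\Delta}\tilde P_1g\|_{L^\infty_tL^2_x}=\|g\|_2$, the Strichartz estimates of Lemma~\ref{lem:strichartz}, and Sobolev embedding in $x$ already covers the subregion of \eqref{Str-conditions} with $\gamma\le0$ (equivalently $\frac2q+\frac dr\le\frac d2$); what remains is the part with $\gamma>0$, which is void without any symmetry assumption and constitutes the genuine radial gain (for $d=1$ even this part is void and the lemma is just standard Strichartz and Sobolev embedding). Its source is the radial Sobolev effect of Lemma~\ref{lem:radial-Sob} --- a radial function of frequency $\sim1$ obeys $\|h\|_{L^\infty(|x|\sim R)}\lesssim R^{-(d-1)/2}\|h\|_{L^2}$ for $R\ge1$ --- but to harvest it with the sharp exponents I would pass to the kernel.

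Passing to $TT^*$, the localized homogeneous estimate is equivalent to the boundedness of the operator with kernel $e^{i(t-s)\Delta}\tilde P_1^{\,2}$ from $L^{q'}_tL^{r'}_x$ to $L^q_tL^r_x$; for radial functions this operator is $h\mapsto\int_{\R^d}K_\tau(|x|,|y|)h(y)\,dy$, where, with $\nu=\tfrac d2-1$,
\[
K_\tau(r,s)=c_d\int_0^\infty e^{-i\tau\rho^2}\,\psi(\rho)\,\frac{J_\nu(r\rho)}{(r\rho)^\nu}\,\frac{J_\nu(s\rho)}{(s\rho)^\nu}\,\rho^{d-1}\,d\rho,\qquad\psi\in C_c^\infty(\{\rho\sim1\}).
\]
Using $|z^{-\nu}J_\nu(z)|\lesssim\langle z\rangle^{-(d-1)/2}$ together with the oscillatory expansion $z^{-\nu}J_\nu(z)=\langle z\rangle^{-(d-1)/2}\big(e^{iz}b_+(z)+e^{-iz}b_-(z)\big)$ for $z\gtrsim1$, with symbol bounds $|\partial_z^kb_\pm(z)|\lesssim\langle z\rangle^{-k}$, one splits $K_\tau$ into finitely many oscillatory integrals with phases $-\tau\rho^2\pm r\rho\pm s\rho$ and applies stationary and non-stationary phase in $\rho$; this yields, for every $M$,
\[
|K_\tau(r,s)|\lesssim\langle r\rangle^{-\frac{d-1}{2}}\langle s\rangle^{-\frac{d-1}{2}}\langle\tau\rangle^{-\frac12}\big(1+\langle\tau\rangle^{-1}|r-s|\big)^{-M},
\]
together with an analogous bound carrying the stronger factor $\langle\tau\rangle^{-d/2}$ when $r$ or $s\lesssim1$. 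Inserting this and writing the spatial integrals in polar coordinates reduces the desired mapping property to a weighted Hardy--Littlewood--Sobolev / Schur-type inequality in $(r,\tau)\in(0,\infty)\times\R$: the factor $\langle\tau\rangle^{-1/2}$ governs the fractional integration in time and limits $\frac2q$, while the competition between the radial volume factor $r^{d-1}$ and the kernel decay $\langle r\rangle^{-(d-1)/2}$ leaves $d-1$ ``effective'' spatial dimensions, which combined with the single time dimension produce the coefficient $2d-1$ in $\frac2q+\frac{2d-1}{r}\le\frac{2d-1}{2}$; the endpoint $q=\frac{4d+2}{2d-1}$, where equality is permitted, is then recovered by a real-interpolation/orthogonality argument in the spirit of Keel--Tao, bootstrapping from the strictly subcritical bounds.

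Finally, the retarded estimates follow from the homogeneous ones for all admissible triples by composing the estimate for $(q,r,\gamma)$ with the dual of the estimate for $(\tilde q,\tilde r,-\gamma)$, replacing $\int_\R$ by $\int_0^t$ by way of the Christ--Kiselev lemma (legitimate since $q>\tilde q'$ off the double endpoint), while the term $\big\||\nabla|^{-\gamma}\int_0^te^{i(t-s)\Delta}F(s)\,ds\big\|_{L^\infty_tL^2_x}$ is immediate from the dual homogeneous estimate and the unitarity of $e^{it\Delta}$ on $L^2$. The main obstacle is the kernel analysis above: establishing the sharp weighted dispersive bound uniformly across the transition regimes $r\sim1$, $r\sim\tau$, and $|r-s|$ small versus large, and then verifying that the resulting weighted convolution inequality closes on \emph{exactly} the set \eqref{Str-conditions}, including the delicate endpoint $q=\frac{4d+2}{2d-1}$ and the strictness of $\frac2q+\frac{2d-1}{r}<\frac{2d-1}{2}$ below it; everything else is routine.
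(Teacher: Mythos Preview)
The paper does not prove this lemma at all; it is quoted verbatim from the literature with the attribution ``first proved in \cite{Shao} and then developed in \cite{CL,GW}'' and no argument is supplied. Your sketch is therefore not comparable to anything in the paper itself, but it does track the strategy of those references: Littlewood--Paley reduction to unit frequency, $TT^*$, Bessel/Hankel asymptotics to extract a weighted dispersive bound, and Christ--Kiselev for the retarded estimate. That is the right architecture, and the heuristic explanation of the exponent $2d-1$ via the ``effective'' $(d-1)$ spatial dimensions plus one time dimension is accurate.

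Two places in your sketch are genuinely thin rather than routine. First, the kernel bound you state, with the factor $\langle\tau\rangle^{-1/2}(1+\langle\tau\rangle^{-1}|r-s|)^{-M}$, is not by itself strong enough to close the weighted Schur/HLS test on the full open range; in the cited proofs one also exploits the piece of the phase $\pm(r+s)\rho$ (not just $\pm(r-s)\rho$) to gain additional decay when $r+s\gg|\tau|$, and without that the large-$r$, large-$s$, small-$\tau$ region does not converge. Second, the endpoint $q=\frac{4d+2}{2d-1}$ is a diagonal Knapp-type point and the Keel--Tao machinery does not apply directly because the relevant bilinear form is not of the usual $L^1_t$-dispersive type; in \cite{GW,Shao} this endpoint is obtained instead via a bilinear/square-function argument or an atomic decomposition specific to the radial setting. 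You flag both steps as the ``main obstacle,'' which is fair, but be aware that the fixes are more substantial than a bookkeeping exercise.
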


\subsection{Incoming/outgoing operators and some basic properties} \label{sec:LE-Out-in}
In this subsection, we recall the definitions of the incoming and outgoing operators which were inspired by Tao \cite{Tao-2} and constructed in our previous work \cite{BDSW-II}, as well as some basic lemmas on their properties, which were proved in \cite{BDSW-II}. We  first introduce the deformed Fourier transform and its basic properties. For radial function $f\in \mathcal S(\R^d)$ (more general functions can be defined by density), we define the deformed Fourier transform,
\begin{align}
\mathcal F f(\rho)&=
\int_0^{+\infty}\!\!\int_{-\frac\pi2}^{\frac\pi2}e^{-2\pi i \rho r\sin \theta}\cos^{d-2} \theta  r^{\frac{3}{2}(d-1)-2} f(r) \,d\theta\,dr,\label{deformed-Fourier-radial}
\end{align}
which is the standard Fourier transform of $|x|^{\frac{d-1}{2}-2}f$ for radial $f$.
Then we have the inverse transform,
\begin{align}
f(r)&=r^{-\frac{d-1}{2}+2}\int_0^{+\infty}\!\!\int_{-\frac\pi2}^{\frac\pi2}e^{2\pi i \rho r\sin \theta}\cos^{d-2} \theta \rho^{d-1} \mathcal F f(\rho)\,d\theta d\rho.\label{inverse-deformed-Fourier-radial}
\end{align}

For convenience,  denote by
$$
J(r)=\int_0^{\frac\pi2}e^{2\pi i  r\sin \theta}\cos^{d-2} \theta\,d\theta.
$$
Then we have
\begin{align}
f(r)&=r^{-\frac{d-1}{2}+2}\int_0^{+\infty}\!\!\Big(J(\rho r)+J(-\rho r)\Big) \rho^{d-1} \mathcal F f(\rho)\,d\theta d\rho.\label{inverse-deformed-Fourier-radial}
\end{align}
Let
$$
K( r)=\chi_{\ge 1}( r)\Big[-\frac{1}{2\pi i r}-\frac{d-3}{(2\pi i r)^3}\Big], \quad \mbox{for } d=3,4,5.
$$
Then the form $J(r)-K(r)$ has some good properties as follows.
\begin{lem}\label{lem:J-K}
Let $d=3,4,5$, then there exist functions $a(r)$ and $\eta(\theta)$ satisfying
$$
a(r)=O\big(\langle r\rangle^{-5}\big);\qquad \eta(\theta)\in C^3\Big(\big[0,\frac \pi2\big]\Big),
$$
such that
\begin{align*}
J(r)-K(r)&=\int_0^{\frac\pi2} e^{2\pi i  r\sin \theta}\chi_{\ge \frac\pi6}(\theta)\cos^{d-2} \theta  \,d\theta+ a(r)\int_0^{\frac\pi2} e^{2\pi i  r\sin \theta}\eta(\theta)  \,d\theta.
\end{align*}
\end{lem}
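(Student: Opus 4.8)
The plan is a non-stationary-phase argument at the endpoint $\theta=0$, kept apart from the endpoint $\theta=\tfrac\pi2$, where the phase $\sin\theta$ becomes stationary but the amplitude $\cos^{d-2}\theta$ degenerates. First I would insert the smooth partition of unity $1=\chi_{\le\pi/6}(\theta)+\chi_{\ge\pi/6}(\theta)$ and split $J(r)=J_{\mathrm{near}}(r)+J_{\mathrm{far}}(r)$, where
$$J_{\mathrm{far}}(r)=\int_0^{\pi/2}e^{2\pi i r\sin\theta}\,\chi_{\ge\pi/6}(\theta)\cos^{d-2}\theta\,d\theta$$
is exactly the first term on the right-hand side of the claimed identity. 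Thus it remains to show that $J_{\mathrm{near}}(r)-K(r)$ has the form $a(r)\int_0^{\pi/2}e^{2\pi i r\sin\theta}\eta(\theta)\,d\theta$ with $a=O(\langle r\rangle^{-5})$ and $\eta\in C^3\big([0,\tfrac\pi2]\big)$. The regime $r\lesssim 1$ is immediate: there $K(r)=0$ (the factor $\chi_{\ge1}(r)$ vanishes), while $\chi_{\le1}(r)J_{\mathrm{near}}(r)$ is already of the desired form, with $\eta(\theta)=\chi_{\le\pi/6}(\theta)\cos^{d-2}\theta$ and $a(r)=\chi_{\le1}(r)$, which is bounded and compactly supported, hence $O(\langle r\rangle^{-5})$; the two pieces can be combined through a partition of unity in $r$.

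For $r\gtrsim 1$ the key point is that $\cos\theta\gtrsim 1$ on $\operatorname{supp}\chi_{\le\pi/6}\subset[0,\tfrac{11\pi}{60}]$, so I would integrate by parts repeatedly in $\theta$ using $e^{2\pi i r\sin\theta}=\dfrac{1}{2\pi i r\cos\theta}\,\partial_\theta e^{2\pi i r\sin\theta}$. The upper endpoint $\theta=\tfrac\pi2$ never contributes, since the cutoff (and, redundantly, $\cos^{d-2}\theta$) kills the amplitude there; the lower endpoint $\theta=0$ produces, in successive steps, the boundary contributions $-\tfrac1{2\pi i r}$, then $0$, then a term of order $r^{-3}$, which together with the localization factor $\chi_{\ge1}(r)$ reproduce precisely the two explicit terms of $K(r)$. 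The vanishing of the intermediate term and the evaluation of the $r^{-3}$ coefficient rest on the fact that after one integration by parts the amplitude carries a factor $\sin\theta$ (because $\chi_{\le\pi/6}$ is constant near $0$ and $\partial_\theta\cos^{d-3}\theta$ vanishes at $\theta=0$) and hence vanishes at $\theta=0$; this is where the restriction $d\in\{3,4,5\}$ enters, guaranteeing that $\cos^{d-3}\theta$ and all the iterated amplitudes stay smooth on the relevant interval.

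After $k$ integrations by parts what is left is $c_k\,(2\pi i r)^{-k}\int_0^{\pi/2}e^{2\pi i r\sin\theta}\widetilde a_k(\theta)\,d\theta$, where $c_k=\pm1$ and $\widetilde a_k$ is $r$-independent, supported in $[0,\tfrac{11\pi}{60}]\subset[0,\tfrac\pi2)$, and (for $d\le 5$) smooth, in particular $C^3$; taking $k=5$ — after checking that the extra endpoint terms produced at orders $r^{-4},r^{-5}$ vanish or can be reabsorbed into the error — yields the required decomposition with $a(r)=c_5\,\chi_{\ge1}(r)(2\pi i r)^{-5}=O(\langle r\rangle^{-5})$ and $\eta=\widetilde a_5$. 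I expect the only real obstacle to be the bookkeeping in this last step: carrying the chain of amplitudes $\widetilde a_j=\partial_\theta(\widetilde a_{j-1}/\cos\theta)$, determining which boundary values $\widetilde a_j(0)$ vanish (precisely those retaining a $\sin\theta$ factor) and evaluating the handful that do not, so as to confirm that exactly the terms displayed in $K(r)$ are produced and that the tail has the stated regularity and decay.
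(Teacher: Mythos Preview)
The paper does not actually prove this lemma here; it is quoted from the authors' companion paper \cite{BDSW-II}, so there is no in-text argument to compare against. Your plan --- isolate the $\theta\approx 0$ contribution by the cutoff $\chi_{\le\pi/6}$ and extract the asymptotics via repeated integration by parts with $e^{2\pi i r\sin\theta}=(2\pi i r\cos\theta)^{-1}\partial_\theta e^{2\pi i r\sin\theta}$ --- is the natural one, and is certainly how such a statement is proved.

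Two comments on the bookkeeping you leave for the end. First, if you run the recursion $b_{j+1}=\partial_\theta(b_j/\cos\theta)$ starting from $b_0=\cos^{d-2}\theta$ near $\theta=0$, you get $b_0(0)=1$, $b_1(0)=0$, $b_2(0)=-(d-3)$, so the boundary contribution at order $(2\pi i r)^{-3}$ comes out as $+\dfrac{d-3}{(2\pi i r)^3}$, with the opposite sign from the displayed definition of $K$. This looks like a sign typo in this paper's restatement of $K$; you will hit it immediately when you carry out the computation, and it does not affect the method. Second, for $d=4$ the next nonvanishing boundary value is $b_4(0)=-3$, so after five integrations by parts the remainder consists of the integral term $(2\pi i r)^{-5}\int e^{2\pi i r\sin\theta}b_5(\theta)\,d\theta$ \emph{plus} a pure constant multiple of $(2\pi i r)^{-5}$. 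A constant cannot be written as $\int_0^{\pi/2}e^{2\pi i r\sin\theta}\eta(\theta)\,d\theta$ for a fixed $\eta$, and your partition-of-unity treatment of the regime $r\lesssim 1$ likewise produces its own $(a,\eta)$ pair. So the literal form ``$a(r)$ times a single such integral'' is not quite what drops out; the statement should be read as allowing a finite sum of terms $a_j(r)\int e^{2\pi i r\sin\theta}\eta_j(\theta)\,d\theta$ with each $a_j=O(\langle r\rangle^{-5})$ and $\eta_j\in C^3$. That weaker form is all that is used downstream (see the proof of Lemma~\ref{prop:f++-highfreq}), and with that reading your argument is complete.
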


Next, we define the incoming and outgoing decomposition in terms of the deformed Fourier transform as follows.
\begin{definition}\label{def:outgong-incoming}
Let  $f\in L^1_{loc}(\R^d)$ be a radial function.
We define the incoming component of $f$ as
$$
f_{in}(r)=r^{-\frac{d-1}{2}+2}\int_0^{+\infty}\!\!\Big(J(-\rho r)+K(\rho r)\Big) \rho^{d-1} \mathcal F f(\rho)\, d\rho;
$$
the outgoing component of $f$ as
$$
f_{out}(r)=r^{-\frac{d-1}{2}+2}\int_0^{+\infty}\!\!\Big(J(\rho r)-K(\rho r)\Big) \rho^{d-1} \mathcal F f(\rho)\, d\rho.
$$
\end{definition}
A very important  property of the outgoing/incoming component of function $f$ is that the following identity
$$
f=f_{in}+f_{out}
$$
holds, which is obtained directly from the definition above. Some further properties will be stated in the rest of this section.

The following lemma shows that if $f$ is supported outside of a ball, then $f_{out/in}$ is also almost supported outside of the ball.
\begin{lem}\label{lem:supportf+}
Let $\mu(d)=2$ if $d=3,4$, and $\mu(5)=3$.  Suppose that supp$f\subset \{x:|x|\geq1\}$, then
\begin{align*}
\big\|\chi_{\le \frac 14} (P_{\ge 1}f)_{out/in}\big\|_{H^{\mu(d)}(\R^d)}
\lesssim \|P_{2^{k}}f\|_{H^{-1}(\R^d)}.
\end{align*}
\end{lem}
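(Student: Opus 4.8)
The plan is to represent $(P_{\ge1}f)_{out/in}$ as an integral operator acting on $f$ and then exploit the spatial separation between $\supp f$ and the ball $\{|x|\le\frac14\}$ to obtain a kernel that is both smoothing and rapidly decaying. Since the right-hand side is dyadically localized, I would fix a dyadic $N\ge1$ and prove $\|\chi_{\le\frac14}(P_Nf)_{out/in}\|_{H^{\mu(d)}}\lesssim\|P_Nf\|_{H^{-1}}$, with a spare power of $N$ so that the blocks may be summed. Writing $P_Nf=P_N(\chi_{\ge1}f)$ and splitting $P_Nf=\chi_{\ge1/2}P_Nf+\chi_{\le1/2}P_Nf$, the second piece is $O(N^{-\infty})$ in every Sobolev norm by a routine non-stationary-phase bound (the cutoffs $\chi_{\le1/2}$ and $\chi_{\ge1}$ sit at distance $\gtrsim1$ while the frequency is $\sim N$) and is negligible after applying the (bounded) incoming/outgoing operator; so it suffices to treat a radial $h$ supported in $\{|x|\ge\frac12\}$.

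Inserting the inversion formula for the deformed Fourier transform into Definition~\ref{def:outgong-incoming} and unwinding the weight $|x|^{(d-5)/2}$ built into $\mathcal F$, one gets (after a routine regularization of the $\rho$-integral)
\begin{align*}
  h_{out}(r)&=c_d\,r^{\frac{5-d}{2}}\int_{1/2}^{\infty}(r')^{\frac{3d-7}{2}}\,h(r')\,\mathcal L(r,r')\,dr',\\
  \mathcal L(r,r')&=\int_0^\infty\big(J(\rho r)-K(\rho r)\big)\big(J(\rho r')+J(-\rho r')\big)\,\rho^{d-1}\,d\rho,
\end{align*}
with $J(\rho r)-K(\rho r)$ replaced by its complex conjugate $J(-\rho r)+K(\rho r)$ for $h_{in}$. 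On the support of $\chi_{\le\frac14}$ one has $r\le\frac{11}{40}$, hence $r'-r\ge\frac9{40}$ and $r+r'\gtrsim1$: the ``light cones'' of the two factors do not meet.

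The heart of the matter is to show that $\mathcal K(r,r'):=r^{\frac{5-d}{2}}(r')^{\frac{3d-7}{2}}\mathcal L(r,r')$ is, on $\{r\le\frac{11}{40},\ r'\ge\frac12\}$, a function whose $r'$-derivatives decay faster than any power of $r'$ and whose $r$-derivatives up to order $\mu(d)$ lie in $L^2(r^{d-1}\,dr)$ near $r=0$. Using Lemma~\ref{lem:J-K} for the structure of $J(\rho r)-K(\rho r)$ and the exact identity $J(\rho r')+J(-\rho r')=\int_{-\pi/2}^{\pi/2}e^{2\pi i\rho r'\sin\theta'}\cos^{d-2}\theta'\,d\theta'$, the $\rho$-integral defining $\mathcal L$ becomes a superposition of oscillatory integrals with phase proportional to $\rho\,(r\sin\theta-r'\sin\theta')$; on the relevant region this phase is non-degenerate, in the sense that $|r\sin\theta-r'\sin\theta'|+r'|\cos\theta'|\gtrsim1$ (where $\partial_{\theta'}$ of it vanishes, $|\sin\theta'|=1$ forces the phase to have size $\ge r'-\frac14\ge\frac14$; where it vanishes, $|\sin\theta'|\le\frac1{4r'}\le\frac12$ forces $|\cos\theta'|\gtrsim1$). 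Integrating by parts in $\rho$ (the $\rho^{d-1}$ factor annihilates the boundary terms at $\rho=0$, and the phases $r\pm r'$ are $\gtrsim1$) and in $\theta'$ near the zero set of the phase then yields the claimed bound. This step is also where $d\in\{3,4,5\}$ and the exact value of $\mu(d)$ enter: the prefactor $r^{(5-d)/2}$ is Lipschitz for $d=3$, $C^{0,1/2}$ for $d=4$, and constant for $d=5$, while the amplitude supplied by Lemma~\ref{lem:J-K} is only $C^3$ and decays at the rate $\langle\cdot\rangle^{-(d-1)/2}$; balancing these against the weight $\rho^{d-1}$ forces $\mu(3)=\mu(4)=2$, $\mu(5)=3$, and ties the whole scheme to the dimensions in which $K$ has the stated two-term form.

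Given the kernel bound, for $|\alpha|\le\mu(d)$ the radial function $\partial^\alpha[\chi_{\le\frac14}h_{out/in}]$ is a finite combination of terms $r^{-l}\int_{1/2}^{\infty}(\partial_r^j\mathcal K)(r,r')\,h(r')\,dr'$ with $j+l\le\mu(d)$; Cauchy--Schwarz in $r'$ (pairing $h(r')$ with $(r')^{(d-1)/2}$ to recover $\|h\|_{L^2(\R^d)}$), the rapid $r'$-decay of $\mathcal K$, and the near-origin integrability of its $r$-derivatives give $\|\chi_{\le\frac14}h_{out/in}\|_{H^{\mu(d)}}\lesssim\|h\|_{L^2}\lesssim\|P_Nf\|_{L^2}\sim N\|P_Nf\|_{H^{-1}}$, while an extra integration by parts in $r'$ against the frequency-localized $h$ produces the spare powers of $N$ needed to sum over the blocks. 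I expect the oscillatory-integral analysis of $\mathcal L(r,r')$ in the third step --- keeping it at once quantitative in the $r'$-decay and sharp in how many $r$-derivatives survive --- to be the main obstacle; the rest is bookkeeping.
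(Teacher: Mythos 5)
The paper does not prove Lemma~\ref{lem:supportf+} here --- it is quoted from the companion work \cite{BDSW-II} --- so I cannot compare your argument line by line with the authors'. Your overall scheme (push the incoming/outgoing split through the inversion of the deformed Fourier transform, replace $J-K$ by the decomposition of Lemma~\ref{lem:J-K}, exploit the non-degeneracy of the phase $\rho(r\sin\theta-r'\sin\theta')$ when $r\le\frac{11}{40}$ and $r'\ge\frac12$) is consistent with the machinery built in Lemmas~\ref{lem:J-K} and \ref{prop:f++-highfreq}, and the non-degeneracy inequality $|r\sin\theta-r'\sin\theta'|+r'|\cos\theta'|\gtrsim1$ you invoke does hold on that region (with $A=r\sin\theta$, $B=r'\sin\theta'$, $C=r'\cos\theta'$, one has $|A-B|+|C|\ge|B|+|C|-|A|\ge\sqrt{B^2+C^2}-|A|=r'-|A|\ge\tfrac{9}{40}$ for $|A|\le\tfrac{11}{40}$), even though your two-case justification of it is not quite complete as written.

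The step I do not find secure is the very first reduction. You split $P_Nf=\chi_{\ge\frac12}P_Nf+\chi_{\le\frac12}P_Nf$ and discard $\chi_{\le\frac14}(\chi_{\le\frac12}P_Nf)_{out/in}$ as ``negligible after applying the (bounded) incoming/outgoing operator.'' But no boundedness of $g\mapsto\chi_{\le\frac14}\,g_{out/in}$ is available: the only quantitative bound on these projections stated in the paper, Lemma~\ref{prop:bound-fpm-L2}, carries the \emph{complementary} cutoff $\chi_{\ge\frac14}$, and a bound in the region $|x|\le\frac14$ is exactly the content of the lemma you are proving, so invoking it is circular. The obstruction is not merely formal: for $g$ with support reaching the origin, $\mathcal Fg=\widehat{|x|^{(d-5)/2}g}$ is the Fourier transform of a function carrying the singular weight $|x|^{(d-5)/2}$, so for $d=3,4$ it decays only polynomially in $\rho$, and $\int_0^\infty(J-K)(\rho r)\rho^{d-1}\mathcal Fg(\rho)\,d\rho$ is not absolutely convergent --- the $O(N^{-\infty})$ smallness of $\chi_{\le\frac12}P_Nf$ in Sobolev norms does not transfer to $\chi_{\le\frac14}(\cdot)_{out/in}$ for free. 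This is also why the ``routine regularization'' of the $\rho$-integral you defer cannot be waved away: it is the same convergence issue, and it must be set up so as to survive taking $\mu(d)$ derivatives in $r$. Finally, your balancing heuristic explains why the prefactor $r^{(5-d)/2}$ caps $\mu(4)$ at $2$, but it does not account for $\mu(3)=2$ (where $r^{(5-d)/2}=r$ is smooth) or for $\mu(5)=3$; those values must come from tracing the $C^3$ amplitude $\eta$ and the $\langle\cdot\rangle^{-5}$ decay of $a$ in Lemma~\ref{lem:J-K} through the $r$-derivatives, which your outline does not carry out.
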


We also need the boundedness of incoming/outgoing projection on $\dot H^s(\R^d)$.
\begin{lem}
\label{prop:bound-fpm-L2}
Suppose that $f\in L^2(\R^d)$, then for any $k\in \Z^+$ and $s\in [0,1]$,
\begin{align*}
\big\|\chi_{\ge \frac 14} \big(P_{2^{k}}f\big)_{out/in}\big\|_{\dot H^s(\R^d)}&\lesssim 2^{ks}\|f\|_{L^2(\R^d)}.
\end{align*}
Here the implicit constant is independent on $k$.
\end{lem}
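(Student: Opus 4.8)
\emph{Proof plan.} The plan is to reduce to the endpoint regularities $s=0$ and $s=1$, to pass to the incoming component (the outgoing one differing from it only by $P_{2^k}f$, which is harmless), and then to treat separately the part of $P_{2^k}f$ supported near the origin and the part supported away from it.

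\emph{Reduction.} By Plancherel and H\"older in frequency one has $\||\nabla|^s u\|_{L^2}\le\|u\|_{L^2}^{1-s}\|\nabla u\|_{L^2}^{s}$ for $s\in[0,1]$, so, applying this with $u=\chi_{\ge1/4}(P_{2^k}f)_{out/in}$, it is enough to prove
\begin{align*}
&\big\|\chi_{\ge1/4}(P_{2^k}f)_{out/in}\big\|_{L^2(\R^d)}\lesssim\|f\|_{L^2(\R^d)},\\
&\big\|\nabla\big(\chi_{\ge1/4}(P_{2^k}f)_{out/in}\big)\big\|_{L^2(\R^d)}\lesssim 2^{k}\|f\|_{L^2(\R^d)},
\end{align*}
with constants independent of $k\in\Z^+$. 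Since $(P_{2^k}f)_{out}=P_{2^k}f-(P_{2^k}f)_{in}$ and, by Bernstein, $\|\chi_{\ge1/4}P_{2^k}f\|_{L^2}+2^{-k}\|\nabla(\chi_{\ge1/4}P_{2^k}f)\|_{L^2}\lesssim\|f\|_{L^2}$, only the incoming component needs to be bounded. Write $P_{2^k}f=f_1+f_2$ with $f_1=\chi_{\le1/8}P_{2^k}f$ and $f_2=\chi_{\ge1/8}P_{2^k}f$; then $\|f_1\|_{L^2}+\|f_2\|_{L^2}\lesssim\|f\|_{L^2}$ and, since cutting in space spreads frequencies by only $O(1)$, $\|f_1\|_{H^{-m}}\lesssim 2^{-km}\|f\|_{L^2}$ for every $m\ge0$.

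\emph{The near-origin piece.} The function $f_1$ is supported in $\{|x|\le 11/80\}$, at a fixed positive distance from the support of $\chi_{\ge1/4}$; by a localization estimate for the incoming operator of the type of Lemma~\ref{lem:supportf+} (now bounding how far the incoming component of a function supported near the origin leaks outward) one gets $\big\|\chi_{\ge1/4}(f_1)_{in}\big\|_{H^1(\R^d)}\lesssim\|f_1\|_{H^{-m}(\R^d)}\lesssim 2^{-km}\|f\|_{L^2}$, which is far stronger than the two required bounds.

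\emph{The away-from-origin piece.} On $\{|x|\ge1/8\}$ the weight $|x|^{\frac{d-5}{2}}$ entering the deformed Fourier transform \eqref{deformed-Fourier-radial} is a fixed bounded smooth function, so $\|\mathcal F f_2\|_{L^2(\rho^{d-1}d\rho)}\lesssim\|f_2\|_{L^2}$ and, analogously, $\|\rho\,\mathcal F f_2\|_{L^2(\rho^{d-1}d\rho)}=\big\||\nabla|\big(|x|^{\frac{d-5}{2}}\chi_{\ge1/8}P_{2^k}f\big)\big\|_{L^2}\lesssim 2^{k}\|f\|_{L^2}$, the last step using the $L^2$-boundedness of the commutator of $|\nabla|$ with multiplication by a bounded smooth symbol, together with Bernstein. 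It remains to bound the modified inverse transform $\rho\mapsto r$ with kernel $J(-\rho r)+K(\rho r)$ and weight $r^{2-\frac{d-1}{2}}$ (bounded on $\{r\ge1/4\}$): since $K$ is purely imaginary, $J(-\rho r)+K(\rho r)=\overline{J(\rho r)-K(\rho r)}$, so Lemma~\ref{lem:J-K} writes this kernel as an oscillatory piece with phase $-2\pi\rho r\sin\theta$, $\theta$ bounded away from $0$ (hence the $\partial_\rho$-derivative of the phase is $\gtrsim r\gtrsim1$), plus a piece with rapidly decaying amplitude $a(\rho r)=O(\langle\rho r\rangle^{-5})$. A Schur or $TT^*$ estimate exploiting the non-degeneracy of this phase, together with the trivial bound for the decaying piece, shows that $G\mapsto\chi_{\ge1/4}(r)\,r^{2-\frac{d-1}{2}}\int_0^\infty\big(J(-\rho r)+K(\rho r)\big)\rho^{d-1}G(\rho)\,d\rho$ maps $L^2(\rho^{d-1}d\rho)$ to $L^2(\R^d)$; applied to $G=\mathcal F f_2$ this gives $\|\chi_{\ge1/4}(f_2)_{in}\|_{L^2}\lesssim\|f\|_{L^2}$. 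For the $\dot H^1$ bound one distributes $\nabla$: the term carrying $\nabla\chi_{\ge1/4}$ is supported on the fixed annulus $\{1/4\le|x|\le11/40\}$ and is absorbed by the $L^2$ bound, while in the other term $\partial_r$ falling on the kernel produces a factor $\rho$ times a kernel of the same type, so the same operator bound applied to $\rho\,\mathcal F f_2$ yields the gain $2^k$.

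\emph{Main obstacle.} The crux is uniformity in $k$. Peeling off $f_1$ is precisely what prevents applying the (mildly singular) deformed transform to a function carrying mass near the origin, which in dimensions $d=3,4$ would cost a power of $2^k$ at the Plancherel step; after that, the substantive work is the Schur/$TT^*$ oscillatory-integral estimate for the modified inverse transform on the weighted spaces, uniform in the frequency scale, for which the non-degenerate phase produced by the $J-K$ splitting of Lemma~\ref{lem:J-K} is essential. The localization estimate for $f_1$ and the Leibniz handling of the spatial cutoff are then routine.
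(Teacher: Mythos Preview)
The paper does not actually prove this lemma here; it is imported from the companion paper \cite{BDSW-II} (see the opening sentence of Section~\ref{sec:LE-Out-in}), so there is no in-paper argument to compare against and I evaluate your sketch on its own.

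The overall architecture is reasonable: the interpolation to $s\in\{0,1\}$, the reduction to the incoming piece via $(P_{2^k}f)_{out}=P_{2^k}f-(P_{2^k}f)_{in}$, and the splitting $P_{2^k}f=f_1+f_2$ by distance to the origin are all sound. For $f_2$ you correctly identify the mechanism---the non-degenerate phase isolated by Lemma~\ref{lem:J-K} feeding a $TT^*$/Schur bound on the modified inverse transform---though this is only asserted, not carried out; checking that the weights $r^{-\frac{d-1}{2}+2}$, $\rho^{d-1}$ combine correctly with $\|\mathcal F f_2\|_{L^2(\rho^{d-1}d\rho)}$ uniformly in $k$ is the substantive content and deserves more than a sentence.

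The genuine gap is the near-origin piece. You dispatch $f_1=\chi_{\le 1/8}P_{2^k}f$ by invoking ``a localization estimate of the type of Lemma~\ref{lem:supportf+}'' in the \emph{opposite} direction: that lemma bounds $\chi_{\le 1/4}(\,\cdot\,)_{in/out}$ for inputs supported in $\{|x|\ge 1\}$, whereas you need $\chi_{\ge 1/4}(\,\cdot\,)_{in}$ for an input supported in $\{|x|\le 11/80\}$. No such statement is available in the paper, and the two spatial regions do not play symmetric roles in Definition~\ref{def:outgong-incoming} (the output weight $r^{-\frac{d-1}{2}+2}$ and the weight $r^{\frac{3(d-1)}{2}-2}$ inside $\mathcal F$ behave differently near and far from the origin). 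If you try to transplant the oscillatory-integral mechanism, combining the phase of $\overline{J(\rho r)-K(\rho r)}$ with that of $\mathcal F f_1$, the $\rho$-derivative $r\sin\theta+r'\sin\theta'$ is \emph{not} bounded away from zero on the relevant set ($r\ge 1/4$, $\theta\ge \pi/6$, $r'\le 11/80$, $\theta'\in[-\pi/2,\pi/2]$ allows values arbitrarily close to $1/8-11/80<0$), so the argument behind Lemma~\ref{lem:supportf+} does not carry over directly. Moreover, even the stated Lemma~\ref{lem:supportf+} only gains from $H^{-1}$ to $H^{\mu(d)}$, not from $H^{-m}$ for arbitrary $m$ as you claim. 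This step needs an independent argument; as written it is unjustified.
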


The last one is the following simplified form of $f_{in/out}$, which follows from Lemma \ref{lem:J-K}.
\begin{lem}\label{prop:f++-highfreq}
Let $k$ be an integer.
Suppose that $f\in L^2(\R^d)$ with supp$f\subset \{x:|x|\ge 1\}$, then
\begin{align*}
\big(P_{2^{k}}f\big)_{out/in}(r)=&r^{-\frac{d-1}{2}+2}\int_0^{+\infty}\!\!\!\int_0^{\frac\pi2} e^{2\pi i  r\sin \theta}\chi_{\ge \frac\pi6}(\theta)\cos^{d-2} \theta  \,d\theta\\
&\qquad\qquad\cdot\chi_{2^{k-1}\le \cdot \le 2^{k+1}}(\rho)\mathcal F\big(P_{2^k} f\big)(\rho)\rho^{d-1}\,d\rho+\mathcal R\big(P_{2^k} f\big),
\end{align*}
with
\begin{align*}
\big\|\chi_{\ge \frac14}\mathcal R\big(P_{2^k} f\big)\big\|_{H^{\mu(d)}(\R^d)}\lesssim 2^{-k}\|f\|_{H^{-1}(\R^d)}.
\end{align*}
\end{lem}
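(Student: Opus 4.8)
The plan is to feed the splitting of $J-K$ furnished by Lemma~\ref{lem:J-K} into the defining integral for $(P_{2^k}f)_{out/in}$ and to collect into $\mathcal R(P_{2^k}f)$ the pieces that are either of low amplitude or supported off the frequency shell $\rho\sim 2^k$. By Definition~\ref{def:outgong-incoming} and the oddness $K(-r)=-K(r)$, one has
\[
(P_{2^k}f)_{out/in}(r)=r^{-\frac{d-1}{2}+2}\int_0^{+\infty}(J-K)(\pm\rho r)\,\rho^{d-1}\,\mathcal F\big(P_{2^k}f\big)(\rho)\,d\rho ,
\]
so it suffices to treat one sign. Writing $(J-K)=\mathcal J_1+\mathcal J_2$ as in Lemma~\ref{lem:J-K}, with
\[
\mathcal J_1(r)=\int_0^{\frac\pi2}e^{2\pi ir\sin\theta}\chi_{\ge\frac\pi6}(\theta)\cos^{d-2}\theta\,d\theta ,\qquad \mathcal J_2(r)=a(r)\int_0^{\frac\pi2}e^{2\pi ir\sin\theta}\eta(\theta)\,d\theta ,
\]
$a(r)=O(\langle r\rangle^{-5})$, $\eta\in C^3$, and then inserting $1=\chi_{2^{k-1}\le\cdot\le2^{k+1}}(\rho)+\big(1-\chi_{2^{k-1}\le\cdot\le2^{k+1}}(\rho)\big)$ inside the $\mathcal J_1$-integral, the $\chi_{2^{k-1}\le\cdot\le2^{k+1}}$-part is precisely the displayed main term, and I would set $\mathcal R(P_{2^k}f)=\mathcal R_1+\mathcal R_2$, where $\mathcal R_1$ is the $\mathcal J_1$-integral restricted to $\rho\notin[2^{k-1},2^{k+1}]$ and $\mathcal R_2$ is the full $\mathcal J_2$-integral.

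Both remainders obey the same heuristic: they are lower order because $\mathcal F(P_{2^k}f)$ lives near $\rho\sim 2^k$ while the kernel at stake is small there. For $\mathcal R_2$ the smallness is the amplitude: stationary phase at the single critical point $\theta=\pi/2$ gives $\big|\partial_r^j\!\int_0^{\pi/2}e^{2\pi ir\sin\theta}\eta(\theta)\,d\theta\big|\lesssim\langle r\rangle^{-1/2}$ for $j\le\mu(d)$ (differentiation only brings down bounded factors $\sin\theta$), hence $|\partial_r^j\mathcal J_2(r)|\lesssim\langle r\rangle^{-5}$, and on $\supp\chi_{\ge1/4}$ with $\rho\sim2^k$ one has $\rho r\gtrsim 2^k$, so this weight contributes a large negative power of $2^k$. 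It is cleanest to rewrite, using that $\mathcal F(P_{2^k}f)$ is the radial Fourier transform of $|x|^{\frac{d-1}{2}-2}P_{2^k}f$,
\[
\mathcal R_2(P_{2^k}f)(r)=c\,r^{\frac{5-d}{2}}\int_{\R^d}\widehat{\Phi_0}(z)\,\big(|x|^{\tfrac{d-1}{2}-2}P_{2^k}f\big)(rz)\,dz ,\qquad \Phi_0(\xi):=\mathcal J_2(|\xi|) ,
\]
and note that $\widehat{(|x|^{\frac{d-1}{2}-2}P_{2^k}f)(r\,\cdot\,)}$ is supported essentially in $\{|\zeta|\sim 2^kr\}$, where $\Phi_0$ and all of its derivatives are $O((2^kr)^{-5})$; combining this with Hardy's inequality (legitimate since $\tfrac{d-1}{2}-2\in(-\tfrac d2,0]$ for $d\le5$) to control $\||x|^{\frac{d-1}{2}-2}P_{2^k}f\|_{L^2}$ and with $\|P_{2^k}f\|_{L^2}\lesssim 2^k\|f\|_{H^{-1}}$ yields the claimed bound $\|\chi_{\ge1/4}\mathcal R_2(P_{2^k}f)\|_{H^{\mu(d)}}\lesssim 2^{-k}\|f\|_{H^{-1}}$. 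For $\mathcal R_1$ the smallness comes from frequency concentration, and this is where $\supp f\subset\{|x|\ge1\}$ is essential: by a mismatch estimate in the spirit of Lemma~\ref{lem:mismatch}, $P_{2^k}f=\chi_{\ge1/8}P_{2^k}f$ up to a term rapidly small in $k$ and in space, so $|x|^{\frac{d-1}{2}-2}P_{2^k}f=\Phi\cdot(\chi_{\ge1/8}P_{2^k}f)+(\text{rapidly small})$ with $\Phi:=|x|^{\frac{d-1}{2}-2}\chi_{\ge1/16}$ a fixed smooth function, and taking Fourier transforms spreads the frequency support $\{|\xi|\sim2^k\}$ only by convolution with $\widehat\Phi$, which decays polynomially; hence $\big(1-\chi_{2^{k-1}\le\cdot\le2^{k+1}}\big)\mathcal F(P_{2^k}f)$ decays polynomially in the distance of $\rho$ from $2^k$. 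Since $\mathcal J_1(\rho r)$ and its $r$-derivatives decay in $\rho r$ too (stationary phase at $\theta=\pi/2$, where $\cos^{d-2}\theta$ vanishes to order $d-2$), a dyadic decomposition of $\rho$ together with the $r$-integration over $r\ge1/4$ gives the same bound for $\mathcal R_1$; adding the two finishes the proof.

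The hard part is the $\mathcal R_1$ estimate: one must turn the heuristic that the deformed Fourier transform respects dyadic frequency localization into a genuine bound, coupling the spatial support of $f$, the mismatch estimate, and the merely polynomial (rather than Schwartz) decay of $\widehat\Phi$ and of $\mathcal J_1$, which forces the summation of a geometric-type series in the dyadic distance of $\rho$ from $2^k$, all while carrying the spatial weight $r^{\frac{5-d}{2}}$ and the $\mu(d)$ derivatives through every factor. The $\mathcal R_2$ bound is soft by comparison once one observes $\rho r\gtrsim 2^k$ on the relevant region, and it is this estimate that pins down the power $5$ in the amplitude $a$ appearing in Lemma~\ref{lem:J-K}.
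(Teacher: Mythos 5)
Your proposal takes essentially the same route as the paper: starting from Definition~\ref{def:outgong-incoming}, split $J-K$ via Lemma~\ref{lem:J-K} and insert the frequency shell cutoff $\chi_{2^{k-1}\le\cdot\le 2^{k+1}}(\rho)$, with the off-shell piece controlled by the mismatch philosophy (Lemma~\ref{lem:mismatch}, using $\supp f\subset\{|x|\ge1\}$) and the $\mathcal J_2$-piece controlled by the decay $a(r)=O(\langle r\rangle^{-5})$ on the region $\rho r\gtrsim 2^k$. The only structural difference is the order of the two decompositions — you split $J-K$ first and only cut the $\mathcal J_1$-integral in $\rho$, whereas the paper cuts the full $(J-K)$-kernel in $\rho$ first (producing $h_k^1$) and then applies Lemma~\ref{lem:J-K} to the on-shell piece (producing $h_k^2$); the two remainders differ only by the off-shell $\mathcal J_2$-contribution, which both sides control by the same decay of $a$, so the partitions are algebraically equivalent and equally effective. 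Your write-up actually spells out more of the mechanism (stationary phase at $\theta=\pi/2$, Hardy's inequality for the weight $|x|^{\frac{d-1}{2}-2}$, the geometric series over dyadic distances for the polynomially decaying tails) than the paper, which simply defers the details to \cite{BDSW-II}.
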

\begin{proof}
We only sketch the proof here, one can see details in \cite{BDSW-II}.  It suffices to give the formula for $\big(P_{2^{k}}f\big)_{in}$ since the proof for $\big(P_{2^{k}}f\big)_{out}$ shares essentially the same procedures.

According to the support of $f$, we may write
$\mathcal F \big(P_{2^k}f\big)$ as the standard Fourier transform of $|x|^{\frac{d-1}{2}-2}P_{2^k}\chi_{\ge \frac12}f$. Hence, using the mismatch estimates in Lemma \ref{lem:mismatch}, we have
\begin{align*}
\big(P_{2^k}f\big)_{in}(r)=&r^{-\frac{d-1}{2}+2}\int_0^{+\infty}\!\!\Big(J(-\rho r)+K(\rho r)\Big) \rho^{d-1}\chi_{2^{k-1}\le \cdot \le 2^{k+1}}(\rho) \mathcal F \big(P_{2^k}f\big)(\rho)\, d\rho+h_k^1,
\end{align*}
with
\begin{align*}
\big\|h_k^1\big\|_{H^{\mu(d)}(\R^d)}\lesssim 2^{-k}\|f\|_{H^{-1}(\R^d)}.
\end{align*}
Moreover, by using  Lemma \ref{lem:J-K}, we further write
\begin{align*}
r^{-\frac{d-1}{2}+2}&\int_0^{+\infty}\!\!\Big(J(-\rho r)+K(\rho r)\Big) \rho^{d-1}\chi_{2^{k-1}\le \cdot \le 2^{k+1}}(\rho) \mathcal F \big(P_{2^k}f\big)(\rho)\, d\rho\\
=&r^{-\frac{d-1}{2}+2}\int_0^{+\infty}\!\!\!\int_0^{\frac\pi2} e^{2\pi i  r\sin \theta}\chi_{\ge \frac\pi6}(\theta)\cos^{d-2} \theta  \,d\theta
\>\chi_{2^{k-1}\le \cdot \le 2^{k+1}}(\rho)\mathcal F\big(P_{2^k} f\big)(\rho)\rho^{d-1}\,d\rho+h_k^2,
\end{align*}
with
\begin{align*}
\big\|\chi_{\ge \frac14}h_k^2\big\|_{H^{\mu(d)}(\R^d)}\lesssim 2^{-k}\|f\|_{H^{-1}(\R^d)}.
\end{align*}
Let $\mathcal R\big(P_{2^k} f\big)=h_k^1+h_k^2$ and then it satisfies the desired estimate. Hence we finish the proof.
\end{proof}

\section{Estimates on the incoming/outgoing linear flow}\label{sec:LE-Out-in}

In this section, we give the estimates on the linear flow when the initial data is the outgoing or incoming. Let $f$ be  a radial function  satisfying supp$f\subset \{x:|x|\geq1\}$. We abuse the notation and write $f=\chi_{\ge 1}f$ for simplicity. In the following, we focus on the estimates on the outgoing part, the estimates on the incoming part are similar.
Fixing an integer $k_0\ge 0$, we consider the estimates on $e^{it\Delta}\Big(P_{\ge 2^{k_0}}f\Big)_{out}$. For function $v$, we will use the notation $v_L=e^{it\Delta}v$ for short in the following.

Due to Lemmas \ref{lem:supportf+} and \ref{prop:f++-highfreq}, we may write
\begin{align*}
\Big(P_{\ge 2^{k_0}}f\Big)_{out,L}=\Big(P_{\ge 2^{k_0}}f\Big)_{out,L}^I+\Big(P_{\ge 2^{k_0}}f\Big)_{out,L}^{II},
\end{align*}
where
\begin{align}
\Big(P_{\ge 2^{k_0}}f\Big)_{out,L}^I=& \Big(\chi_{\le \frac 14} (P_{\ge 2^{k_0}}f)_{out}\Big)_{L}+\sum\limits_{k= k_0}^\infty \chi_{\le c(1+2^kt)}\cdot\big(\chi_{\ge \frac14}
\mathcal R\big(P_{2^k} f\big)\big)_{L}\notag\\
&+\sum\limits_{k= k_0}^\infty \chi_{\le c(1+2^kt)}\cdot e^{it\Delta}\Big(\chi_{\ge \frac14}(r)\>r^{-\frac{d-1}{2}+2}\int_0^{+\infty}\!\!\!\int_0^{\frac\pi2} e^{2\pi i \rho r\sin \theta}\notag\\
&\qquad\cdot\chi_{\ge \frac\pi6}(\theta)\cos^{d-2} \theta  \,d\theta\cdot\chi_{2^{k-1}\le \cdot \le 2^{k+1}}(\rho)\mathcal F\big(P_{2^k} f\big)(\rho)\rho^{d-1}\,d\rho\Big);\label{16.49}
\end{align}
and
\begin{align}
\Big(P_{\ge 2^{k_0}}f\Big)_{out,L}^{II}=&\sum\limits_{k= k_0}^\infty \chi_{\ge c(1+2^kt)}\cdot \Big(\chi_{\ge \frac 14} (P_{2^{k}}f)_{out}\Big)_{L}.\label{16.50}
\end{align}
Here $c$ is small positive constant.
Then the estimates on $\Big(P_{\ge 2^{k_0}}f\Big)_{out,L}^I$ and $\Big(P_{\ge 2^{k_0}}f\Big)_{out,L}^{II}$ are included in the following two lemmas.
\begin{lem}\label{lem:Part-I}
Let $d=3,4,5$, $s\in [0,\mu(d)], q\ge2,r\ge2$ and $\frac2q+\frac dr= \frac d2$. Then the following estimates hold,
\begin{align*}
\Big\||\nabla|^s\Big(P_{\ge 2^{k_0}}f\Big)_{out,L}^I\Big\|_{L^q_tL^r_x(\R^+\times\R^d)}
\lesssim
\|f\|_{H^{-1}(\R^d)}.
\end{align*}
\end{lem}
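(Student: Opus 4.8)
The plan is to estimate separately the three pieces into which $\big(P_{\ge 2^{k_0}}f\big)_{out,L}^I$ is split in \eqref{16.49}: the ``near–origin'' term $\big(\chi_{\le \frac14}(P_{\ge 2^{k_0}}f)_{out}\big)_L$; the ``remainder'' sum $\sum_{k\ge k_0}\chi_{\le c(1+2^kt)}\big(\chi_{\ge\frac14}\mathcal R(P_{2^k}f)\big)_L$; and the ``main'' sum $\sum_{k\ge k_0}\chi_{\le c(1+2^kt)}e^{it\Delta}G_k$, where $G_k$ denotes the oscillatory integral appearing in the last two lines of \eqref{16.49}. The first two are handled by soft arguments (the structural lemmas of Sections~2–3 together with Strichartz), while the third — where the spatial cut-off is genuinely used — is the heart of the matter.

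For the first term: since $s\le \mu(d)$, applying the endpoint-allowed Strichartz estimate (Lemma~\ref{lem:strichartz}) to $|\nabla|^s$ of the data and using $\||\nabla|^s h\|_{L^2}\le \|h\|_{H^{\mu(d)}}$ reduces it to $\big\|\chi_{\le\frac14}(P_{\ge 2^{k_0}}f)_{out}\big\|_{H^{\mu(d)}}$, which is $\lesssim\|f\|_{H^{-1}}$ by Lemma~\ref{lem:supportf+} (applied to each dyadic $k\ge k_0$ and summed). For the second term, Lemma~\ref{prop:f++-highfreq} gives $\big\|\chi_{\ge\frac14}\mathcal R(P_{2^k}f)\big\|_{H^{\mu(d)}}\lesssim 2^{-k}\|f\|_{H^{-1}}$; by Strichartz (again with $s\le\mu(d)$ derivatives) the evolved function is bounded in $L^q_tW^{s,r}_x$ by $2^{-k}\|f\|_{H^{-1}}$, and since the cut-off $\chi_{\le c(1+2^kt)}$ has all derivatives bounded uniformly in $k$ and $t$ (the transition scale $c(1+2^kt)\ge c$ being fixed), multiplication by it is bounded on $W^{s,r}_x$ with a constant independent of $k,t$; summing the geometric series $\sum_{k\ge k_0}2^{-k}$ closes this piece.

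The core is the third sum. The guiding picture is that $G_k$ is, up to rapidly decaying tails, the outgoing component of $P_{2^k}f$ — a frequency-$2^k$ wave of outgoing type supported in $|x|\gtrsim 1$ — whose linear evolution propagates outward at group velocity $\sim 2^k$, hence is concentrated near $|x|\sim 1+2^kt$, while $\chi_{\le c(1+2^kt)}$ localizes to the region well behind that front. Quantitatively, writing $e^{it\Delta}G_k$ through the explicit kernel (Lemma~\ref{lem:formula-St}) as an oscillatory integral with phase $\Phi(y,\rho,\theta)=\frac{|x-y|^2}{4t}+2\pi\rho|y|\sin\theta$, one checks that any critical point of $\Phi$ in $y$ forces $|x|=|y|+4\pi t\rho\sin\theta\gtrsim 1+2^kt$ (using $\rho\sim 2^k$, $\sin\theta\gtrsim1$, $|y|\ge\frac14$), so that, on the support of $\chi_{\le c(1+2^kt)}$ with $c$ chosen small, $|\nabla_y\Phi|\gtrsim 2^k+|y|/t$ uniformly. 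Repeated integration by parts in $y$ then yields, for every $N$, a pointwise bound
\begin{align*}
\big|\chi_{\le c(1+2^kt)}(x)\,e^{it\Delta}G_k(x)\big|\lesssim_N 2^{-kN}\langle 2^kt\rangle^{-N}\langle x\rangle^{-N}\|P_{2^k}f\|_{L^2}
\end{align*}
for $t\gtrsim 2^{-k}$; the complementary range $0<t\lesssim 2^{-k}$ is handled by the same integration by parts, now using that the supports of $\chi_{\le c(1+2^kt)}$ and $G_k$ are separated by an $O(1)$ distance so that $|\nabla_y\Phi|\gtrsim t^{-1}\gtrsim 2^k$ and one gains powers of $t$ (which compensate the $t^{-d/2}$ prefactor). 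Since $e^{it\Delta}G_k$ is essentially frequency-localized at $\sim 2^k$, Bernstein gives $\||\nabla|^s(\chi_{\le c(1+2^kt)}e^{it\Delta}G_k)\|_{L^r_x}\lesssim 2^{ks}\|\chi_{\le c(1+2^kt)}e^{it\Delta}G_k\|_{L^r_x}$; integrating the pointwise bound over $\{|x|\le c(1+2^kt)\}$ and then in $t$ (the factor $\langle 2^kt\rangle^{-N}$ being integrable), together with $\|P_{2^k}f\|_{L^2}\lesssim 2^k\|P_{2^k}f\|_{H^{-1}}$, produces a per-$k$ bound $\lesssim 2^{-k\beta}\|P_{2^k}f\|_{H^{-1}}$ with $\beta$ as large as we wish, which sums in $k$ by Cauchy–Schwarz to $\lesssim\|f\|_{H^{-1}}$.

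The main obstacle is precisely this non-stationary phase analysis of the third piece: establishing the uniform lower bound $|\nabla_y\Phi|\gtrsim 2^k+|y|/t$ across the full parameter range — in particular controlling the large-$|y|$ region, where the amplitude $|y|^{2-(d-1)/2}$ grows, and the small-time region, where the $t^{-d/2}$ prefactor degenerates — and then carefully tracking that the resulting super-polynomial gain in $2^k$ and $t$ dominates all accumulated losses (the $2^{ks}$ from the derivative, the $2^k$ from $\|P_{2^k}f\|_{L^2}$, and the volume factor $(1+2^kt)^{d}$), leaving a summable series in $k$.
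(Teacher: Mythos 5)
Your proposal matches the paper's proof essentially step for step: the same three-way decomposition from \eqref{16.49}, the same treatment of the first two pieces via Lemmas \ref{lem:supportf+}, \ref{prop:f++-highfreq} and Strichartz, and the same non-stationary phase argument for the third piece exploiting that the cutoff $\chi_{\le c(1+2^kt)}$ localizes behind the outgoing wavefront, where the phase derivative is bounded below by $\sim 2^k + |y|/t$ uniformly (so the paper does not actually need your separate small-time case). The only superficial differences are that the paper integrates by parts in the radial variable rather than the full gradient and bounds the derivatives $|\nabla|^j$, $j\le 3$, directly from the oscillatory integral rather than invoking Bernstein on a product that is only ``essentially'' frequency-localized — a point that would need a brief justification in your write-up but does not change the conclusion.
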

\begin{proof}
We shall consider the estimates on the three pieces in \eqref{16.49}.
For the first two pieces, by using Lemmas \ref{lem:supportf+}, \ref{prop:f++-highfreq} and \ref{lem:strichartz},
we have
\begin{align}
\Big\||\nabla|^s\Big(\chi_{\le \frac 14} (P_{\ge 2^{k_0}}f)\Big)_{out,L}\Big\|_{L^q_tL^r_x(\R^+\times\R^d)}
\lesssim \|f\|_{H^{-1}(\R^d)}.\label{11.59}
\end{align}
and
\begin{align}
\Big\||\nabla|^s\sum\limits_{k= k_0}^\infty \chi_{\le c(1+2^kt)}\cdot\big(\chi_{\ge \frac14}
\mathcal R\big(P_{2^k} f\big)\big)_{L}\Big\|_{L^q_tL^r_x(\R^+\times\R^d)}
\lesssim \|f\|_{H^{-1}(\R^d)}.\label{11.59-II}
\end{align}
It remains to  consider the estimates for the third piece in \eqref{16.49}. To do this, we first claim that for $j= 0,1,2,3$,
\begin{align}
\Big||\nabla|^j\Big[\chi_{\le c(1+2^kt)}\cdot e^{it\Delta}\Big(\chi_{\ge \frac14}(r)\>r^{-\frac{d-1}{2}+2}e^{2\pi i \rho r\sin \theta}\Big)\Big]\Big|
\lesssim \langle t\rangle^{-\frac d2}\rho^{-10}.\label{16.47}
\end{align}
To prove the claim (\ref{16.47}), we first use the formula in Lemma \ref{lem:formula-St} and write
\begin{align*}
e^{it\Delta}\Big(\chi_{\ge \frac14}(r)\>r^{-\frac{d-1}{2}+2}& e^{2\pi i \rho r\sin \theta}\Big)(x)
=\frac{C}{t^{\frac d2}}\int_{\R^d}e^{i\frac{|x-y|^2}{4t}+2\pi i \rho|y|\sin \theta}\chi_{\ge \frac14}(y)|y|^{-\frac{d-1}{2}+2} \,dy\\
=&\frac{C}{t^{\frac d2}}e^{i\frac{|x|^2}{4t}} \int_{\R^d}e^{-i\frac{x\cdot y}{2t}+i\frac{|y|^2}{4t}+2\pi i \rho|y|\sin \theta}\chi_{\ge \frac14}(y)|y|^{-\frac{d-1}{2}+2}\,dy\notag\\
=& \frac{C}{t^{\frac d2}}e^{i\frac{|x|^2}{4t}}\int_{|\omega|=1}\int_0^{+\infty} e^{i\phi(r)}\chi_{\ge \frac14}(r)r^{\frac{d-1}{2}+2}\,dr d\omega,
\end{align*}
where $C\in \C$ may vary line to line,
and $\phi(r)=-\frac{x\cdot \omega}{2t}r+\frac{r^2}{4t}+2\pi \rho r\sin \theta$. Then it is easy to see that
\begin{align}\label{15.35}
\phi'(r)=-\frac{x\cdot \omega}{2t}+\frac{r}{2t}+2\pi\rho\sin \theta,\quad \phi''(r)=\frac1{2t},\quad \mbox{and }\quad \phi^{(j)}(r)=0,\quad  j\ge 3.
\end{align}
Note that when $|x|\le c(\frac12+2^kt)$, $\rho\sim  2^k$, $r\ge \frac18$ and $\sin \theta\ge\frac14$, by choosing $c$ small enough, we have
\begin{align}\label{15.34}
\phi'(r)\ge  \frac14\Big(\frac{r}{t}+\pi\rho\Big).
\end{align}
Then using the formula,
\begin{align*}
e^{i\phi(r)}=\frac1{i\phi'(r)} \partial_r\big(e^{i\phi(r)}\big),
\end{align*}
and integrating by parts $K$ times, we have that for some $c_K\in\C$,
\begin{align}
\chi_{\le c(1+2^kt)}& \cdot e^{it\Delta}\Big(\chi_{\ge \frac14}(r)\>r^{-\frac{d-1}{2}+2}e^{2\pi i \rho r\sin \theta}\Big)(x)
\notag\\
&=
\chi_{\le c(1+2^kt)}\cdot\frac{c_K}{t^{\frac d2}}e^{i\frac{|x|^2}{4t}}\int_{|\omega|=1}\int_0^{+\infty}\!\!\! e^{i\phi(r)} \partial_r\Big(\frac{1}{\phi'(r)}\partial_r\Big)^{K-1}\Big[\frac{1}{\phi'(r)}r^{\frac{d-1}{2}+2}\chi_{\ge \frac14}(r)\Big]\,dr d\omega.\label{16.37}
\end{align}
Notice that it follows from Lemma \ref{lem:muli-Lei-formula} that
\begin{align*}
\partial_r\Big(\frac{1}{\phi'(r)}&\partial_r\Big)^{K-1}\Big[\frac{1}{\phi'(r)}r^{\frac{d-1}{2}+2}\chi_{\ge \frac14}(r)\Big]
\\
&=\sum\limits_{\begin{subarray}{c}
l_1,\cdots,l_K\in\R,l'\in\R;\\
l_j\le
j;l_1+\cdots+l_K+l'=K
\end{subarray}}
\!\!\!C_{l_1,\cdots,l_K,l'}\partial_r^{l_1}\Big(\frac{1}{\phi'(r)}\Big)\cdots
\partial_r^{l_K}\Big(\frac{1}{\phi'(r)}\Big)\>\partial_r^{l'}\Big[r^{d-1-\beta}\chi_{\ge \frac14}(r)\Big],
\end{align*}
which combined with \eqref{15.35} and \eqref{15.34} implies
$$
\Big|\partial_r\Big(\frac{1}{\phi'(r)}\partial_r\Big)^{K-1}\Big[\frac{1}{\phi'(r)}r^{\frac{d-1}{2}+2}\chi_{\ge \frac14}(r)\Big]\Big|
\lesssim t^{\frac d2}\langle t\rangle^{-\frac d2}\rho^{-10}\chi_{\gtrsim \frac14}(r).
$$
Then inserting the above estimate into \eqref{16.37}, we obtain (\ref{16.47}) for $j=0$,
\begin{align*}
\Big|\chi_{\le c(1+2^kt)}\cdot e^{it\Delta}\Big(\chi_{\ge \frac14}(r)\>r^{-\frac{d-1}{2}+2}e^{2\pi i \rho r\sin \theta}\Big)\Big|
\lesssim \langle t\rangle^{-\frac d2}\rho^{-15}.
\end{align*}
The estimates on the $j$-th derivative in \eqref{16.47} share similar arguments, since when the derivatives hit $\chi_{\le c(1+2^kt)}$ and $\chi_{\ge \frac14}(r)\>r^{-\frac{d-1}{2}+2}$, the estimates would become better, and when the derivatives hit $e^{2\pi i \rho r\sin \theta}$, it only increases the power of $\rho$. Hence we finish the proof of \eqref{16.47}.

It  follows from H\"older's inequality and \eqref{16.47} that
\begin{align*}
\Big||\nabla|^j&\Big[\sum\limits_{k= k_0}^\infty \chi_{\le c(1+2^kt)}\cdot e^{it\Delta}\Big(\chi_{\ge \frac14}(r)\>r^{-\frac{d-1}{2}+2}
\int_0^{+\infty}\!\!\!\int_0^{\frac\pi2} e^{2\pi i \rho r\sin \theta}\chi_{\ge \frac\pi6}(\theta)\cos^{d-2} \theta  \,d\theta\notag\\
&\cdot \chi_{2^{k-1}\le \cdot \le 2^{k+1}}(\rho)\mathcal F\big(P_{2^k} f\big)(\rho)\rho^{d-1}\,d\rho\Big)\Big]\Big|
\lesssim
\langle t\rangle^{-\frac d2}2^{-5k} \big\|\chi_{\gtrsim 2^k}(\rho)\mathcal F \left(P_{2^k}\big(\chi_{\ge 1}f\big)\right)\big\|_{L^2_\rho}.
\end{align*}
We next prove that
\begin{align}
\big\|\chi_{\gtrsim 2^k}(\rho)\mathcal F \left(P_{2^k}\big(\chi_{\ge 1}f\big)\right)\big\|_{L^2_\rho}
 \lesssim
2^{-\frac{d-1}{2}k} \big\|P_{2^k}\big(\chi_{\ge 1}f\big)\big\|_{L^2(\R^d)}.\label{16.48}
\end{align}
To this end, notice that
$$
\mathcal F \left(P_{2^k}\big(\chi_{\ge 1}f\big)\right)=\mathscr F \big(|x|^{\frac{d-1}{2}-2}P_{2^k}\big(\chi_{\ge 1}f\big)\big),
$$
where $\mathscr F$ is the standard Fourier transformation, then  by Plancherel identity, H\"older's and Bernstein's inequalities and Lemma \ref{lem:mismatch}, we have
\begin{align*}
\big\|\chi_{\gtrsim 2^k}(\rho)&\mathcal F \left(P_{2^k}\big(\chi_{\ge 1}f\big)\right)\big\|_{L^2_\rho}
\lesssim
2^{-\frac{d-1}{2}k}\big\|\mathcal F \left(P_{2^k}\big(\chi_{\ge 1}f\big)\right)\big\|_{L^2_\xi(\R^d)}\\
&\lesssim
2^{-\frac{d-1}{2}k}\big\||x|^{\frac{d-1}{2}-2}P_{2^k}\big(\chi_{\ge 1}f\big)\big\|_{L^2(\R^d)}\\
&\lesssim
2^{-\frac{d-1}{2}k}\Big(\big\||x|^{\frac{d-1}{2}-2}\chi_{\le \frac12}P_{2^k}\big(\chi_{\ge 1}f\big)\big\|_{L^2(\R^d)}+\big\||x|^{\frac{d-1}{2}-2}\chi_{\ge \frac12}P_{2^k}\big(\chi_{\ge 1}f\big)\big\|_{L^2(\R^d)}\Big)\\
&\lesssim
2^{-\frac{d-1}{2}k}\Big(\big\|\chi_{\le \frac12}P_{2^k}\big(\chi_{\ge 1}f\big)\big\|_{L^\infty(\R^d)}+\big\|P_{2^k}\big(\chi_{\ge 1}f\big)\big\|_{L^2(\R^d)}\Big)\\
&\lesssim
2^{-\frac{d-1}{2}k}\big\|P_{2^k}\big(\chi_{\ge 1}f\big)\big\|_{L^2(\R^d)},
\end{align*}
which implies \eqref{16.48}.
Hence, we obtain
\begin{align*}
\Big||\nabla|^j&\Big[\sum\limits_{k= k_0}^\infty \chi_{\le c(1+2^kt)}\cdot e^{it\Delta}\Big(\chi_{\ge \frac14}(r)\>r^{-\frac{d-1}{2}+2}
\int_0^{+\infty}\!\!\!\int_0^{\frac\pi2} e^{2\pi i \rho r\sin \theta}\chi_{\ge \frac\pi6}(\theta)\cos^{d-2} \theta  \,d\theta\notag\\
&\cdot \chi_{2^{k-1}\le \cdot \le 2^{k+1}}(\rho)\mathcal F\big(P_{2^k} f\big)(\rho)\rho^{d-1}\,d\rho\Big)\Big]\Big|
\lesssim
\langle t\rangle^{-\frac d2}2^{-6k} \big\|P_{2^k}\big(\chi_{\ge 1}f\big)\big\|_{L^2(\R^d)}.
\end{align*}
By using H\"older's inequality again, we obtain
\begin{align*}
\Big\||\nabla|^j&\Big[\sum\limits_{k= k_0}^\infty \chi_{\le c(1+2^kt)}\cdot e^{it\Delta}\Big(\chi_{\ge \frac14}(r)\>r^{-\frac{d-1}{2}+2}
\int_0^{+\infty}\!\!\!\int_0^{\frac\pi2} e^{2\pi i \rho r\sin \theta}\chi_{\ge \frac\pi6}(\theta)\cos^{d-2} \theta  \,d\theta\notag\\
&\cdot \chi_{2^{k-1}\le \cdot \le 2^{k+1}}(\rho)\mathcal F\big(P_{2^k} f\big)(\rho)\rho^{d-1}\,d\rho\Big)\Big]\Big\|_{L^q_tL^r_x(\R^+\times\R^d)}
\lesssim
\|f\|_{H^{-1}(\R^d)}.
\end{align*}
Hence we finish the proof.
\end{proof}

We next consider the Strichartz estimates for $\Big(P_{\ge 2^{k_0}}f\Big)_{out,L}^{II}$ (see (\ref{16.50}) its definition).
Let  $\sigma_0$  be a positive constant satisfying
$$
\frac{1}{\sigma_0}=\frac{2d+1}{4d-2}-\frac2d.
$$
Then we have the following lemma.
\begin{lem}\label{lem:Part-II}
Let $d=3,4,5$ and $(q,r)$ be one of the following pairs
\begin{align}
(\infty,2), \quad (\infty, \frac{dp}{2}-),\quad (\infty-, \frac{dp}{2}),\quad (2p,dp),\quad (2,\sigma_0).\label{10.13}
\end{align}
Then we have
\begin{align}
\Big\|\Big(P_{\ge 2^{k_0}}f\Big)_{out,L}^{II}\Big\|_{L^q_tL^r_x(\R^+\times\R^d)}
\lesssim
\|f\|_{H^{\frac{s_c}{d}-}(\R^d)}. \label{11.17}
\end{align}
Furthermore, for any $\beta\in [0,1]$, it holds that
\begin{align}
\Big\||\nabla|^\beta\Big(P_{\ge 2^{k_0}}f\Big)_{out,L}^{II}\Big\|_{L^{2}_tL^{\sigma_{0}}_x(\R^+\times\R^d)}
\lesssim
\|f\|_{H^{\beta-\frac1{\sigma_0}+}(\R^d)}. \label{11.21}
\end{align}
\end{lem}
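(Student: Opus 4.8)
The plan is to localize in frequency, discard a fast–decaying remainder by ordinary Strichartz, and then extract a genuine time–decay gain from the far–field cutoff $\chi_{\ge c(1+2^kt)}$ by trading spatial location for derivatives via the radial Sobolev embedding. Writing $\big(P_{\ge 2^{k_0}}f\big)_{out}=\sum_{k\ge k_0}\big(P_{2^k}f\big)_{out}$, I first use Lemma~\ref{prop:f++-highfreq} to split, for each $k$,
\begin{align*}
\chi_{\ge\tfrac14}\big(P_{2^k}f\big)_{out}=g_k+\mathcal R_k,
\end{align*}
where $g_k$ is the explicit oscillatory integral against $\chi_{2^{k-1}\le\cdot\le 2^{k+1}}(\rho)\mathcal F\big(P_{2^k}f\big)(\rho)$ and $\|\chi_{\ge\tfrac14}\mathcal R_k\|_{H^{\mu(d)}}\lesssim 2^{-k}\|\tilde P_{2^k}f\|_{H^{-1}}\lesssim 2^{-2k}\|\tilde P_{2^k}f\|_{L^2}$. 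For the $\mathcal R_k$–contribution I drop $\chi_{\ge c(1+2^kt)}$ (it is bounded by $1$) and apply the radial Strichartz estimate of Lemma~\ref{lem:radial-Str} (or Lemma~\ref{lem:strichartz} for the pair $(\infty,2)$); for every pair in \eqref{10.13} this requires only a derivative of nonpositive order on the data, so $\|e^{it\Delta}\mathcal R_k\|_{L^q_tL^r_x}\lesssim\|\mathcal R_k\|_{L^2}\lesssim 2^{-2k}\|\tilde P_{2^k}f\|_{L^2}$ and the sum over $k$ is trivially convergent. It remains to estimate $u_k:=\chi_{\ge c(1+2^kt)}\,e^{it\Delta}g_k$, and here the decisive structural fact is that $g_k$ is a \emph{radial} function essentially Fourier–localized at $|\xi|\sim 2^k$, so that $\big\||\nabla|^{\sigma}g_k\big\|_{L^2}\lesssim 2^{k\sigma}\|P_{2^k}f\|_{L^2}$ for $0\le\sigma\le\mu(d)$, by the representation of Lemma~\ref{prop:f++-highfreq} together with \eqref{16.48} and Lemma~\ref{prop:bound-fpm-L2}.

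Fix a pair $(q,r)$ from \eqref{10.13} with $r>2$ and set $s=\tfrac12-\tfrac1r>0$ and $\alpha=(d-1)(\tfrac12-\tfrac1r)$, so that $\alpha+s=d(\tfrac12-\tfrac1r)$ and the hypotheses of Lemma~\ref{lem:radial-Sob} (embedding $L^2_x\to L^r_x$ with $s$ derivatives and weight $|x|^{\alpha}$) are met. On $\{|x|\ge c(1+2^kt)\}$ one has $|x|^{-\alpha}\lesssim(1+2^kt)^{-\alpha}$, hence, since $e^{it\Delta}$ commutes with $|\nabla|^s$ and is unitary on $L^2$,
\begin{align*}
\|u_k(t)\|_{L^r_x}\lesssim(1+2^kt)^{-\alpha}\big\||x|^{\alpha}e^{it\Delta}g_k\big\|_{L^r_x}\lesssim(1+2^kt)^{-\alpha}\big\||\nabla|^{s}g_k\big\|_{L^2_x}\lesssim(1+2^kt)^{-\alpha}2^{ks}\|P_{2^k}f\|_{L^2}.
\end{align*}
Because $\alpha>1/q$ for every pair in \eqref{10.13}, we get $\big\|(1+2^kt)^{-\alpha}\big\|_{L^q_t(\R^+)}\lesssim 2^{-k/q}$, and therefore $\|u_k\|_{L^q_tL^r_x(\R^+\times\R^d)}\lesssim 2^{k(s-\frac1q)}\|P_{2^k}f\|_{L^2}$. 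A direct check gives $s-\tfrac1q\le\tfrac{s_c}{d}$ for each listed pair — e.g. $(\infty,\tfrac{dp}{2}-)$ gives $s=\tfrac12-\tfrac1r<\tfrac12-\tfrac2{dp}=\tfrac{s_c}{d}$, $(2p,dp)$ gives $s-\tfrac1q=\tfrac12-\tfrac1{dp}-\tfrac1{2p}<\tfrac{s_c}{d}$, $(2,\sigma_0)$ gives $s-\tfrac1q=-\tfrac1{\sigma_0}<0$ — while $(\infty,2)$ is just the energy bound $\|u_k\|_{L^\infty_tL^2_x}\lesssim\|P_{2^k}f\|_{L^2}$. Summing over $k\ge k_0$ and using Cauchy–Schwarz in $k$ — the truncation $k\ge k_0\ge0$ makes the dyadic sum converge at the cost of an arbitrarily small extra exponent, absorbed by the ``$-$'' — yields $\sum_{k\ge k_0}2^{k(s-\frac1q)}\|P_{2^k}f\|_{L^2}\lesssim\|f\|_{H^{\frac{s_c}{d}-}}$, which is \eqref{11.17}.

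For \eqref{11.21} I run the same scheme with $(q,r)=(2,\sigma_0)$ and the extra operator $|\nabla|^\beta$, writing $|\nabla|^\beta u_k=\chi_{\ge c(1+2^kt)}\,e^{it\Delta}|\nabla|^\beta g_k+\big[|\nabla|^\beta,\chi_{\ge c(1+2^kt)}\big]e^{it\Delta}g_k$; the commutator is disposed of via the mismatch estimates of Lemma~\ref{lem:mismatch} at the separation scale $A\sim 2^kt$, the explicit smoothness of $g_k$ from Lemma~\ref{prop:f++-highfreq} making those pieces harmless, so that effectively
\begin{align*}
\big\||\nabla|^\beta u_k(t)\big\|_{L^{\sigma_0}_x}\lesssim(1+2^kt)^{-\alpha}\big\||\nabla|^{s+\beta}g_k\big\|_{L^2}\lesssim(1+2^kt)^{-\alpha}2^{k(s+\beta)}\|P_{2^k}f\|_{L^2}
\end{align*}
with $s=\tfrac12-\tfrac1{\sigma_0}$ and $\alpha=(d-1)(\tfrac12-\tfrac1{\sigma_0})$. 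The constraint $\alpha>\tfrac1q=\tfrac12$ is exactly $\tfrac1{\sigma_0}<\tfrac{d-2}{2(d-1)}$, which holds for $d=3,4,5$ from the definition of $\sigma_0$; integrating in $t$ and summing in $k\ge k_0$ gives $\sum_{k\ge k_0}2^{k(\beta-\frac1{\sigma_0})}\|P_{2^k}f\|_{L^2}\lesssim\|f\|_{H^{\beta-\frac1{\sigma_0}+}}$, i.e. \eqref{11.21}.

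The main obstacle is the parameter verification $\alpha>1/q$: one must check, simultaneously over all five pairs in \eqref{10.13} (and over $\beta\in[0,1]$ for \eqref{11.21}), that the far–field cutoff supplies $t$–decay that is integrable against the relevant time exponent. It is precisely here that the dimensional restriction $d\in\{3,4,5\}$, the exact value of $\sigma_0$, and (for the pair $(\infty,\tfrac{dp}{2}-)$, where one needs $\tfrac{dp}{2}>2$ so that $r>2$) the range $p\ge\tfrac4d$ all enter. The secondary difficulty is the bookkeeping of the commutator $[|\nabla|^\beta,\chi_{\ge c(1+2^kt)}]$ in \eqref{11.21}, which is cleanest if one keeps the explicit oscillatory representation of $g_k$ throughout, so that every cutoff acts on a smooth frequency–$2^k$ profile and the mismatch estimates apply directly.
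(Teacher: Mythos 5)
Your argument is essentially the same as the paper's: the mechanism driving both proofs is the radial Sobolev embedding (Lemma~\ref{lem:radial-Sob}) applied on the support of the far--field cutoff $\chi_{\ge c(1+2^kt)}$, converting the spatial weight $|x|^\alpha$ into the decay factor $(1+2^kt)^{-\alpha}$, followed by the $\dot H^s$ bound on $\chi_{\ge\frac14}(P_{2^k}f)_{out}$ and an $\ell^1$--in--$k$ sum absorbed into the ``$\pm$''. Your parameter checks ($\alpha>1/q$ for each pair; $s-\tfrac1q\le\tfrac{s_c}{d}$; the translation $\alpha>\tfrac12\Leftrightarrow\tfrac1{\sigma_0}<\tfrac{d-2}{2(d-1)}$) match the paper's.

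Two minor remarks. First, your intermediate step of re-splitting $\chi_{\ge\frac14}(P_{2^k}f)_{out}=g_k+\mathcal R_k$ via Lemma~\ref{prop:f++-highfreq} is superfluous here: the paper simply invokes Lemma~\ref{prop:bound-fpm-L2} on the whole piece $\chi_{\ge\frac14}(P_{2^k}f)_{out}$, which yields $\|\chi_{\ge\frac14}(P_{2^k}f)_{out}\|_{\dot H^s}\lesssim 2^{ks}\|P_{2^k}f\|_{L^2}$ directly and in one stroke (and indeed the $\mathcal R_k$ piece was already routed into Part~I in \eqref{16.49}). Your detour is not wrong, just redundant. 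Second, for \eqref{11.21} you treat the commutator $[|\nabla|^\beta,\chi_{\ge c(1+2^kt)}]$ by a single appeal to the mismatch estimate, which does not quite apply as stated (Lemma~\ref{lem:mismatch} involves a frequency projection $P_{\le M}$ and two spatially separated cutoffs). The paper is more careful here: it first splits into a low--frequency part $P_{\le 1}[\cdots]$ handled by Bernstein plus \eqref{15.39}, a spatially mismatched part $\chi_{\le c^2(1+2^kt)}P_{\ge1}[\cdots]$ killed by Lemma~\ref{lem:mismatch}, and a matched high--frequency far--field part $\chi_{\ge c^2(1+2^kt)}P_{\ge1}[\cdots]$ treated as in \eqref{11.17} using Lemma~\ref{lem:frac_Hs} to commute $|\nabla|$ past the spatial cutoff. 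You should carry out this three--piece decomposition rather than invoking the mismatch estimate loosely; otherwise the range $\beta$ close to $1$, where $s+\beta$ exceeds the range covered by Lemma~\ref{prop:bound-fpm-L2}, is not cleanly handled.
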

\begin{proof}
By Lemma \ref{lem:radial-Sob}, we have
\begin{align*}
\Big\|\Big(P_{\ge 2^{k_0}}f\Big)_{out,L}^{II}\Big\|_{L^q_tL^r_x(\R^+\times\R^d)}
\lesssim
&\sum\limits_{k= k_0}^\infty \Big\|\chi_{\ge c(1+2^kt)}\cdot \Big(\chi_{\ge \frac 14} (P_{2^{k}}f)_{out}\Big)_{L}\Big\|_{L^q_tL^r_x(\R^+\times\R^d)}\\
\lesssim
&\sum\limits_{k= k_0}^\infty \Big\|(1+2^kt)^{-\alpha} \Big\|\Big(\chi_{\ge \frac 14} (P_{2^{k}}f)_{out}\Big)_{L}\Big\|_{\dot H^s_x(\R^d)}\Big\|_{L^q_t(\R^+)}\\
\lesssim
&\sum\limits_{k= k_0}^\infty \big\|(1+2^kt)^{-\alpha}\big\|_{L^q_t(\R^+)}\cdot \big\|\chi_{\ge \frac 14} (P_{2^{k}}f)_{out}\big\|_{\dot H^s_x(\R^d)}.
\end{align*}
Here we set $\alpha=(d-1)(\frac12-\frac1r)$ and $s=\frac12-\frac1r$. Note that for any $(q,r)$ satisfies \eqref{10.13} and $q\alpha>1$,
$$
\big\|(1+2^kt)^{-\alpha}\big\|_{L^q_t(\R^+)}\lesssim 2^{-\frac kq}.
$$
Moreover, it follows from Lemma \ref{prop:bound-fpm-L2} that
$$
\big\|\chi_{\ge \frac 14} (P_{2^{k}}f)_{out}\big\|_{\dot H^s_x(\R^d)}
\lesssim 2^{ks}\|P_{2^{k}}f\|_{L^2(\R^d)}.
$$
Thus we have
\begin{align*}
\Big\|\Big(P_{\ge 2^{k_0}}f\Big)_{out,L}^{II}\Big\|_{L^q_tL^r_x(\R^+\times\R^d)}
\lesssim
\|f\|_{H^{\frac12-\frac1r-\frac1q+}(\R^d)}.
\end{align*}
In particular, when $(q,r)=(2,\sigma_0)$, it follows
\begin{align}
\Big\|\Big(P_{\ge 2^{k_0}}f\Big)_{out,L}^{II}\Big\|_{L^2_tL^{\sigma_0}_x(\R^+\times\R^d)}
\lesssim
\|f\|_{H^{-\frac1{\sigma_0}+}(\R^d)}.\label{15.39}
\end{align}
Notice that for the pairs $(q,r)$ in \eqref{10.13}, the maximal value of $\frac12-\frac1r-\frac1q$ is $\frac12-\frac2{dp}-=\frac{s_c}{d}-$, which  finishes the proof for  \eqref{11.17}.

The proof for \eqref{11.21} follows from similar method as above, we only sketch the proof. Write
\begin{align}
|\nabla|^\beta\Big[&\chi_{\ge c(1+2^kt)} e^{it\Delta}\Big(\chi_{\ge \frac14}\left(P_{2^k}f\right)_{out}\Big)\Big]\notag\\
 =&P_{\le1}|\nabla|^\beta\Big[\chi_{\ge c(1+2^kt)} e^{it\Delta}\Big(\chi_{\ge \frac14}\left(P_{2^k}f\right)_{out}\Big)\Big]\label{0.44-I}\\
&\quad+\chi_{\le c^2(1+2^kt)}P_{\ge1} |\nabla|^\beta\Big[\chi_{\ge c(1+2^kt)} e^{it\Delta}\Big(\chi_{\ge \frac14}\left(P_{2^k}f\right)_{out}\Big)\Big]\label{0.44-II}\\
&\qquad+\chi_{\ge c^2(1+2^kt)} P_{\ge1}|\nabla|^\beta\Big[\chi_{\ge c(1+2^kt)} e^{it\Delta}\Big(\chi_{\ge \frac14}\left(P_{2^k}f\right)_{out}\Big)\Big].\label{0.44-III}
\end{align}
By using the Bernstein inequality and \eqref{15.39}, we have
\begin{eqnarray}
\big\|\eqref{0.44-I}\big\|_{L^q_tL^r_x(\R^+\times\R^d)}\lesssim\|f\|_{H^{-\frac1{\sigma_0}+}(\R^d)}.\nonumber
\end{eqnarray}
Furthermore, it follows from mismatch estimate (see Lemma \ref{lem:mismatch}) that
\begin{eqnarray}
\big\|\eqref{0.44-II}\big\|_{L^q_tL^r_x(\R^+\times\R^d)}\lesssim\|f\|_{H^{-10}(\R^d)}.\nonumber
\end{eqnarray}
As for the term \eqref{0.44-III}, using the similar method as the one used in the proof of  \eqref{11.17} and Lemma \ref{lem:frac_Hs},   we obtain that it can be controlled by $\|f\|_{H^{\beta-\frac1{\sigma_0}+}(\R^d)}$. Hence we finish the proof.

\end{proof}

\section{Linear flow estimates for compactly supported functions}\label{sec:LE-CS}


In this section, we shall prove the Strichartz estimates for $e^{it\Delta}g$ with $g$ satisfying the assumptions in Theorem \ref{thm:main2}. The main result is stated as follows.
\begin{prop}\label{prop:lineares-thm2}
Suppose that the suitable smooth function $g$ satisfies that
$$
\mbox{supp\,} g\subset \{x:|x|\le 1\},
$$
then for any $\varrho\ge 2$, $\sigma\ge 2$ with $\frac2\varrho+\frac d\sigma\le \frac d2$, $\frac1\varrho+\frac{d-1}\sigma<\frac{d-1}{2}$,
\begin{align}
\big\|e^{it\Delta}g\big\|_{L^\varrho_tL^\sigma_x(\R\times \R^d)}
\lesssim
\|\langle \xi\rangle^{-\frac{d-2}{(d-1)\varrho}+}\hat g\|_{L^1(\R^d)}.
\end{align}
\end{prop}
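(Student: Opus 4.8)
The plan is to decompose $g$ dyadically in frequency, establish a gain of a negative power of $N$ for each frequency-$N$ piece by playing the dispersive decay against the radial structure and the compact support, and then sum. First I would write $g=P_{\le 1}g+\sum_{N\ge 2}P_N g$ over dyadic $N$ and reduce, by the triangle inequality, to bounding each $\|e^{it\Delta}P_N g\|_{L^\varrho_t L^\sigma_x}$ separately. Since $g$ is radial with $\supp g\subset\{|x|\le 1\}$, the piece $g_N:=P_N g$ is smooth, essentially supported in $\{|x|\le\tfrac{11}{10}\}$ up to errors decaying rapidly in $N$ (this is where Lemma~\ref{lem:mismatch} is used to commute the spatial and frequency cutoffs), and $\widehat{g_N}=\chi_N\hat g$.

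The core of the argument will be three inputs, combined by interpolation. \emph{(i) A radial $L^2$--$L^1$ bound.} Passing to the one-dimensional reduction $h(r)=r^{(d-1)/2}g(r)$ (equivalently, using the deformed Fourier transform $\mathcal F$ of Section~2), Plancherel gives $\|g_N\|_{L^2_x}\sim\|\widehat{h_N}\|_{L^2}\lesssim\|\widehat{h_N}\|_{L^1}\sim N^{-(d-1)/2}\|\widehat{g_N}\|_{L^1_\xi}$: the middle step is the one-dimensional band-limited inequality $\|\widehat{h_N}\|_{L^2}\lesssim\|\widehat{h_N}\|_{L^1}$, which holds because $h_N$ is (essentially) supported in an interval of length $O(1)$, while the last relation uses that $\widehat{h_N}$ lives where $|\rho|\sim N$, so the weight $\rho^{d-1}$ appearing in $\|\widehat{g_N}\|_{L^1(\R^d)}$ costs a factor $N^{(d-1)/2}$ against $\|\widehat{h_N}\|_{L^1(\R)}$. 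This is the only place radiality is essential; it genuinely fails for non-radial $g$ (take a modulated bump $\psi(x)e^{2\pi i\xi_0\cdot x}$). \emph{(ii) A dispersive bound.} Combining $\|e^{it\Delta}g_N(t)\|_{L^\infty_x}\le\|\widehat{g_N}\|_{L^1}$ with $\|e^{it\Delta}g_N(t)\|_{L^\infty_x}\lesssim|t|^{-d/2}\|g_N\|_{L^1_x}\lesssim|t|^{-d/2}\|g_N\|_{L^2_x}$ (Lemma~\ref{lem:formula-St}, the compact support, and (i)) gives $\|e^{it\Delta}g_N(t)\|_{L^\infty_x}\lesssim\min\{1,\,|t|^{-d/2}N^{-(d-1)/2}\}\,\|\widehat{g_N}\|_{L^1}$. \emph{(iii) Interpolation and time integration.} From $\|e^{it\Delta}g_N(t)\|_{L^\sigma_x}\le\|g_N\|_{L^2_x}^{2/\sigma}\|e^{it\Delta}g_N(t)\|_{L^\infty_x}^{1-2/\sigma}$, raising to the power $\varrho$ and integrating in $t$, the hypothesis $\tfrac1\varrho+\tfrac{d-1}\sigma<\tfrac{d-1}2$ is exactly what forces the time exponent $\tfrac{d\varrho}{2}(1-\tfrac2\sigma)$ to exceed $1$, so the integral converges and an elementary computation produces a negative power of $N$, at least as strong as $N^{-\frac{d-2}{(d-1)\varrho}+}$. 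Summing over $N$ (the $\chi_N\hat g$ have bounded overlap) together with the analogous, easier, $N$-free estimate for $P_{\le 1}g$ (using the $\langle t\rangle^{-d/2}$ decay) yields $\|e^{it\Delta}g\|_{L^\varrho_t L^\sigma_x}\lesssim\|\langle\xi\rangle^{-\frac{d-2}{(d-1)\varrho}+}\hat g\|_{L^1}$.

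The hard part will be step (i): making the passage to the one-dimensional picture precise requires controlling the Bessel-type asymptotics of the kernel $J(\rho r)$ of Lemma~\ref{lem:J-K}, keeping track of the fact that $g_N$ (hence $h_N$) is only essentially, not exactly, supported in the unit ball, and allowing a slightly fattened frequency cutoff in the band-limited step; these approximations are what produce the harmless $\varepsilon$-loss recorded by the ``$+$'' in the exponent. Everything else should be routine: the short-time range (where the $\min$ in (ii) equals $1$) is dominated by the trivial $L^\infty$ bound and is actually better than needed, the long-time range is where the $|t|^{-d/2}$ decay gets spent, and the rapidly decaying error terms from (i) and from the cutoff commutations are trivially summable.
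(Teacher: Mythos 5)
Your proposal takes a genuinely different route from the paper's. The paper never uses (or proves) anything like your step~(i); instead it fixes a threshold $\gamma$, splits time into $|t|\lesssim N^{-\gamma}$ and $|t|\gtrsim N^{-\gamma}$, treats the short-time piece by stationary phase to get a uniform-in-$t$ $L^\sigma_x$ bound and charges the whole gain to the shortness of the interval (Lemma~\ref{lem:g-1-short}), and for the long-time piece it splits space across the surface $|x|\sim N|t|$: inside it proves $O(N^{-100})$ decay by a second stationary-phase argument (Lemma~\ref{lem:g-1-in}), and outside it applies the radial Sobolev embedding to get $(N|t|)^{-(d-1)(1/2-1/\sigma)}$ decay (Lemma~\ref{lem:g-1-out}). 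The optimal $\gamma=\tfrac{d-2}{d-1}$ balances the two contributions and gives exactly $N^{-\frac{d-2}{(d-1)\varrho}}$ per dyadic block. By contrast, you replace all the stationary-phase and light-cone machinery by the single radial $L^2$ bound $\|g_N\|_{L^2}\lesssim N^{-(d-1)/2}\|\widehat{g_N}\|_{L^1}$, then feed it into the trivial $L^2$ conservation plus the $|t|^{-d/2}$ dispersive bound and interpolate. Tracking the exponents, your route gives $N^{-\frac{d-1}{\sigma}-\frac{d-1}{d\varrho}}$ per dyadic block, which is strictly stronger than the paper's $N^{-\frac{d-2}{(d-1)\varrho}}$ for every admissible $(\varrho,\sigma)$; summing then gives the stated weight (and in fact a slightly better one). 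So if your step~(i) is made rigorous, you obtain a more elementary proof of a sharper estimate. The price is that the entire weight of the argument falls on step~(i), which is not in the paper and requires a careful pass through the Bessel asymptotics (separating the $r\rho\lesssim 1$ regime, where the $1$D reduction fails, from the $r\rho\gg 1$ regime where it holds up to $O((r\rho)^{-1})$ corrections), together with the mismatch estimates to make the compact support of $P_Ng$ exact.

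Two minor inaccuracies worth flagging. First, your assertion that the hypothesis $\tfrac1\varrho+\tfrac{d-1}{\sigma}<\tfrac{d-1}{2}$ is \emph{exactly} what makes the time exponent $\tfrac{d\varrho}{2}(1-\tfrac2\sigma)$ exceed $1$ is not right: that inequality is equivalent to $\tfrac1\varrho+\tfrac d\sigma<\tfrac d2$, a strictly weaker condition. Your integral converges under the paper's hypotheses (it follows from either one, using $\sigma>2$), but the hypothesis is not tuned for it — it is tuned for the paper's radial-Sobolev step, which your argument does not use at all. Second, the $+$ in the exponent of the conclusion is not really a byproduct of the Bessel approximations; the paper loses nothing there either. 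It is just a conservative way of stating the bound after summing dyadic blocks, and your argument would not need any $\varepsilon$-loss either once you observe that $\sum_N N^{-\alpha}\|\chi_N\hat g\|_{L^1}\lesssim\|\langle\xi\rangle^{-\alpha}\hat g\|_{L^1}$ by almost-disjointness of the $\chi_N$.
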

The proof of the proposition shall be divided into  several parts. First of all, we show that the estimate holds for low frequency.
\begin{lem}\label{lem:low-fre}
Let $\varrho\ge 2$, $\sigma\ge 2$ with $\frac2\varrho+\frac d\sigma\le \frac d2$. Then
\begin{align*}
\Big\|e^{it\Delta}\big(\chi_{\le 1}P_{\le 1}g\big)\Big\|_{L^\varrho_tL^\sigma_x(\R\times \R^d)}
\lesssim
\|\chi_{\le 1}\hat g\|_{L^1}.
\end{align*}
\end{lem}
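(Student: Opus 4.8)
\textbf{Proof plan for Lemma \ref{lem:low-fre}.}
The plan is to exploit the compact support of $g$ in physical space together with the frequency localization $P_{\le 1}$ to reduce everything to a finite-measure, $L^1$-in-frequency estimate, and then use the fixed-time dispersive/Bernstein bounds plus $TT^*$ in the form of Strichartz. First I would note that $\chi_{\le 1}P_{\le1}g$ is a Schwartz function whose Fourier transform is $\chi_{\le 1}$ convolved (smoothly) with $\widehat{\chi_{\le 1}g}$ — but more simply, since the statement only asks for a bound by $\|\chi_{\le 1}\hat g\|_{L^1}$, I would first reduce to bounding $\|e^{it\Delta}(\chi_{\le 1}P_{\le 1}g)\|_{L^\varrho_tL^\sigma_x}$ by $\|P_{\le 1}g\|_{L^2}$ (or by $\|\widehat{P_{\le 1}g}\|_{L^1}=\|\chi_{\le1}\hat g\|_{L^1}$ directly). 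Concretely, because the spatial cutoff $\chi_{\le 1}$ multiplies a function already localized to frequencies $\lesssim 1$, Lemma \ref{lem:frac_Hs} (or a direct argument) lets me discard $\chi_{\le 1}$ up to acceptable errors, leaving $e^{it\Delta}P_{\le 1}g$; alternatively I keep $\chi_{\le1}$ and treat it as a harmless bounded, rapidly-decaying-in-frequency multiplier.

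The core of the argument is a short-time/long-time split in $t$. For $|t|\le 1$ I would use the standard Strichartz estimate (Lemma \ref{lem:strichartz}): for the admissible endpoint $(\tilde\varrho,\tilde\sigma)$ with $\frac2{\tilde\varrho}+\frac d{\tilde\sigma}=\frac d2$ we get $\|e^{it\Delta}(\chi_{\le1}P_{\le1}g)\|_{L^{\tilde\varrho}_tL^{\tilde\sigma}_x}\lesssim\|\chi_{\le1}P_{\le1}g\|_{L^2_x}$, and since our pair satisfies $\frac2\varrho+\frac d\sigma\le\frac d2$ it is ``sub-admissible'', so on the bounded time interval $\{|t|\le1\}$ H\"older in $t$ upgrades the admissible estimate to the desired one (losing only a constant). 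Then $\|\chi_{\le1}P_{\le1}g\|_{L^2_x}\lesssim\|P_{\le1}g\|_{L^2_x}\lesssim\|\widehat{P_{\le1}g}\|_{L^1}\le\|\chi_{\le1}\hat g\|_{L^1}$ by Cauchy–Schwarz on the ball $\{|\xi|\lesssim1\}$, which has finite measure. For $|t|\ge1$ I would use the dispersive estimate from Lemma \ref{lem:formula-St}: $\|e^{it\Delta}\phi\|_{L^\sigma_x}\lesssim|t|^{-d(\frac12-\frac1\sigma)}\|\phi\|_{L^{\sigma'}_x}$ with $\phi=\chi_{\le1}P_{\le1}g$; since $\phi$ is supported in $|x|\le\frac{11}{10}$ and is smooth with frequencies $\lesssim1$, all its $L^{\sigma'}_x$ norms are comparable and controlled by $\|\phi\|_{L^2_x}\lesssim\|\chi_{\le1}\hat g\|_{L^1}$. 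The time decay $|t|^{-d(\frac12-\frac1\sigma)}$ is integrable to the $\varrho$-th power over $|t|\ge1$ precisely when $\varrho\, d(\frac12-\frac1\sigma)>1$, i.e. $\frac1\varrho<\frac d2-\frac d\sigma$... wait — more carefully, the condition $\frac2\varrho+\frac d\sigma\le\frac d2$ gives $\frac1\varrho\le\frac d4-\frac d{2\sigma}$, hence $d(\frac12-\frac1\sigma)\varrho\ge2>1$, so the tail is summable; if $\varrho=\infty$ the decay bound is uniform and even better. Combining the two regimes yields the claim.

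The only mild subtlety — not really an obstacle — is the interplay of the two cutoffs $\chi_{\le1}$ and $P_{\le1}$: one must make sure that multiplying by $\chi_{\le1}$ after applying $P_{\le1}$ does not destroy the frequency localization needed for the dispersive estimate. This is handled either by Lemma \ref{lem:frac_Hs} (bounding $\|\chi_{\le1}P_{\le1}g\|_{L^p}\lesssim\|P_{\le1}g\|_{L^p}$ uniformly), or simply by observing that for the dispersive step I need no frequency localization at all — only the spatial support bound, which $\chi_{\le1}$ provides directly, together with $\|\chi_{\le1}P_{\le1}g\|_{L^{\sigma'}}\lesssim\|\chi_{\le1}P_{\le1}g\|_{L^\infty}\lesssim\|P_{\le1}g\|_{L^\infty}\lesssim\|\widehat{P_{\le1}g}\|_{L^1}$ by Hausdorff–Young/Bernstein. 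So the whole argument is elementary: dyadic-free, just dispersive decay for large $t$ and Strichartz for small $t$, with the finite-measure frequency ball converting $L^2$ control into $L^1$-of-Fourier-transform control. I expect no genuinely hard step here; the lemma is a warm-up before the harder high-frequency estimates that constitute the rest of Proposition \ref{prop:lineares-thm2}.
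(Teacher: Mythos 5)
Your short-time step contains a genuine gap. With the pair $(\varrho,\sigma)$ satisfying $\frac2\varrho+\frac d\sigma\le\frac d2$, the admissible pair sharing the same spatial exponent $\sigma$ has time exponent $\tilde\varrho$ with $\frac2{\tilde\varrho}=\frac d2-\frac d\sigma\ge\frac2\varrho$, i.e.\ $\tilde\varrho\le\varrho$. So on the bounded interval $\{|t|\le1\}$ you would need to pass from $L^{\tilde\varrho}_t$ (smaller exponent) up to $L^\varrho_t$ (larger exponent), and H\"older goes precisely in the opposite direction: on a set of finite measure one can only lower the exponent, not raise it. The same obstruction appears if you start from any other admissible pair $(\tilde\varrho,\tilde\sigma)$ — either $\tilde\varrho<\varrho$ and H\"older in $t$ fails, or $\tilde\sigma\neq\sigma$ and you need a genuine gain in the spatial integrability, which a bounded time interval does not provide. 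The correct way to beat the sub-admissibility defect $s=\frac d2-\frac2\varrho-\frac d\sigma>0$ is to gain $s$ derivatives: either via Bernstein on (almost) frequency-localized data, or via the Sobolev embedding $\dot W^{s,\tilde\sigma}_x\hookrightarrow L^\sigma_x$ applied after a Strichartz estimate for the admissible pair $(\varrho,\tilde\sigma)$. You do flag the mismatch between $\chi_{\le1}$ and $P_{\le1}$ as a concern, but you then dismiss frequency localization as unnecessary — it is in fact exactly what your short-time step needs, so the issue is misplaced rather than resolved.

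The paper takes a cleaner route with no time-splitting at all. It applies the Strichartz estimate in the sub-admissible form $\big\|e^{it\Delta}\phi\big\|_{L^\varrho_tL^\sigma_x}\lesssim\|\phi\|_{\dot H^s}$ (the Sobolev upgrade just described, quoted as a consequence of Lemma \ref{lem:strichartz}), which reduces the problem to the single stationary estimate \eqref{15.43}, namely $\|\chi_{\le1}P_{\le1}g\|_{\dot H^s}\lesssim\|\chi_{\le1}\hat g\|_{L^1}$. That estimate is then obtained by interpolating between $L^2$ and $\dot H^2$, distributing the two derivatives of $\Delta(\chi_{\le1}P_{\le1}g)$ by Leibniz between the cutoff $\chi_{\le1}$ and $P_{\le1}g$, bounding everything on the compact set $\{|x|\lesssim1\}$ by $\|P_{\le1}g\|_{L^\infty}$ using Bernstein, and finally applying Young (Hausdorff--Young) to get $\|P_{\le1}g\|_{L^\infty}\lesssim\|\widehat{P_{\le1}g}\|_{L^1}=\|\chi_{\le1}\hat g\|_{L^1}$. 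Your long-time dispersive argument is essentially correct, but it is rendered superfluous by this more direct reduction, and it cannot rescue the broken short-time half of your proof. If you wish to keep the time-splitting, replace the H\"older-in-$t$ step by the derivative gain described above; otherwise, adopt the paper's single-shot argument.
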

\begin{proof}
Let $s=\frac d2-\frac2\varrho-\frac d\sigma>0$. It follows from   Lemma \ref{lem:strichartz} that
\begin{align}
\Big\|e^{it\Delta}\big(\chi_{\le 1}P_{\le 1}g\big)\Big\|_{L^\varrho_tL^\sigma_x(\R\times \R^d)}
\lesssim
\big\|\chi_{\le 1}P_{\le 1}g\big\|_{\dot H^s(\R^d)}. \label{15.44}
\end{align}
We only need to show that
\begin{align}
\big\|\chi_{\le 1}P_{\le 1}g\big\|_{\dot H^s(\R^d)}\lesssim \|\chi_{\le 1}\hat g\|_{L^1(\R^d)}.\label{15.43}
\end{align}
Indeed,  it follows from interpolation, H\"older's and Berstein's inequalities that for $0\le s\le 2$,
\begin{align*}
\big\|\chi_{\le 1}P_{\le 1}g\big\|_{\dot H^s(\R^d)}
\lesssim &
\big\|\chi_{\le 1}P_{\le 1}g\big\|_{L^2(\R^d)}^{1-\frac s2}
\big\|\Delta\big(\chi_{\le 1}P_{\le 1}g\big)\big\|_{L^2(\R^d)}^{\frac s2}\\
\lesssim &
\big\|P_{\le 1}g\big\|_{L^2(|x|\lesssim1)}+
\big\|P_{\le 1}g\big\|_{L^2(|x|\lesssim1)}^{1-\frac s2}
\big\|\nabla P_{\le 1}g\big\|_{L^2(|x|\lesssim1)}^{\frac s2}\\
&\quad
+
\big\|P_{\le 1}g\big\|_{L^2(|x|\lesssim1)}^{1-\frac s2}
\big\|\Delta P_{\le 1}g\big\|_{L^2(|x|\lesssim1)}^{\frac s2}\\
\lesssim &
\big\|P_{\le 1}g\big\|_{L^\infty}.
\end{align*}
We can then finish the proof by applying the Young inequality.
\end{proof}

It remains the proof of the estimates for high frequency $P_{\ge 1}g$. By dyadic decomposition, we have
\begin{align*}
\chi_{\le 1}P_{\ge 1}g =\sum\limits_{N\ge 1} \chi_{\le 1}P_Ng.
\end{align*}
Hence in the following, we only consider the function with localized frequency.
The following lemma shows that for high frequency and short time, we can gain regularity for the free flow in suitable Sctrichartz norms.
\begin{lem}\label{lem:g-1-short}
Assume that $\varrho\ge 1$, $\sigma\ge 1$, $N\ge 1$ and $\gamma\in (0,1]$. Then
\begin{align*}
\left\|e^{it\Delta}\big(\chi_{\le 1}P_Ng\big)\right\|_{L^\varrho_tL^\sigma_x([-8N^{-\gamma},8N^{-\gamma}]\times \R^d)}
\lesssim
N^{-\frac\gamma \varrho}\|\chi_{\sim N}\hat g\|_{L^1}.
\end{align*}
\end{lem}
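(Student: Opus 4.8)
The plan is to decompose $\chi_{\le 1}P_N g$ into a superposition of modulated copies of the \emph{fixed} bump $\chi_{\le 1}$ and then reduce everything, via Galilean invariance, to a short-time estimate for $e^{it\Delta}\chi_{\le1}$ alone. Writing $P_N g(x)=\int_{\R^d}\chi_N(\xi)\hat g(\xi)e^{2\pi i x\cdot\xi}\,d\xi$ and pulling the spatial cutoff under the integral gives $\chi_{\le1}(x)P_Ng(x)=\int_{\R^d}\chi_N(\xi)\hat g(\xi)\,\big(\chi_{\le1}(x)e^{2\pi i x\cdot\xi}\big)\,d\xi$. Applying $e^{it\Delta}$ (the interchange with the $\xi$-integral being legitimate for $g$ as in the statement) and then Minkowski's integral inequality in the norm $L^\varrho_tL^\sigma_x(I_N\times\R^d)$, with $I_N:=[-8N^{-\gamma},8N^{-\gamma}]$, reduces the claim to the uniform-in-$\xi$ bound
\[
\big\|e^{it\Delta}\big(\chi_{\le1}e^{2\pi i\,\cdot\,\xi}\big)\big\|_{L^\varrho_tL^\sigma_x(I_N\times\R^d)}\lesssim N^{-\gamma/\varrho}\qquad(|\xi|\sim N),
\]
since then the left side of the Lemma is bounded by $N^{-\gamma/\varrho}\int_{\R^d}\chi_N(\xi)|\hat g(\xi)|\,d\xi\lesssim N^{-\gamma/\varrho}\|\chi_{\sim N}\hat g\|_{L^1}$ (using $\chi_N\le\chi_{\sim N}$ on $\supp\chi_N$). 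I emphasize that decomposing into plane waves is the crucial point: estimating $e^{it\Delta}(\chi_{\le1}P_Ng)$ directly loses, because on $I_N$ its spatial profile spreads over a ball of radius $\sim N^{1-\gamma}\gg1$, whereas each individual modulated bump only \emph{translates}, keeping its unit-scale concentration.

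Next I would invoke the Galilean symmetry of $i\partial_t+\Delta$: up to a unimodular factor, $e^{it\Delta}(\chi_{\le1}e^{2\pi i\,\cdot\,\xi})$ equals $e^{it\Delta}\chi_{\le1}$ translated by a vector parallel to $t\xi$. Hence by translation invariance of $L^\sigma_x$, $\|e^{it\Delta}(\chi_{\le1}e^{2\pi i\,\cdot\,\xi})\|_{L^\varrho_tL^\sigma_x(I_N\times\R^d)}=\|e^{it\Delta}\chi_{\le1}\|_{L^\varrho_tL^\sigma_x(I_N\times\R^d)}$, with no $\xi$-dependence at all. It then remains only to control the fixed profile: I claim $\sup_{|t|\le 8}\|e^{it\Delta}\chi_{\le1}\|_{L^\sigma_x}\lesssim_{d,\sigma}1$ for every $\sigma\in[1,\infty]$. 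Since $\chi_{\le1}\in\mathcal S(\R^d)$, the pseudo-conformal vector field $x-2it\nabla$ (which commutes with $i\partial_t+\Delta$) gives $x^\alpha e^{it\Delta}\chi_{\le1}=e^{it\Delta}\big((x-2it\nabla)^\alpha\chi_{\le1}\big)$, whose $L^\infty_x$ norm is $\lesssim_\alpha\|\widehat{(x-2it\nabla)^\alpha\chi_{\le1}}\|_{L^1}\lesssim_\alpha(1+|t|)^{|\alpha|}$; taking $|\alpha|$ with $\alpha$ ranging up to order $d+1$ yields $\|\langle x\rangle^{d+1}e^{it\Delta}\chi_{\le1}\|_{L^\infty_x}\lesssim(1+|t|)^{d+1}$, and then $\|e^{it\Delta}\chi_{\le1}\|_{L^\sigma_x}\le\|\langle x\rangle^{-(d+1)}\|_{L^\sigma_x}\|\langle x\rangle^{d+1}e^{it\Delta}\chi_{\le1}\|_{L^\infty_x}\lesssim(1+|t|)^{d+1}$, which is $\lesssim1$ on $|t|\le8$. (For $\sigma\ge2$ one could also just use mass conservation plus Sobolev embedding, but the weighted estimate covers all $\sigma\ge1$ at once.)

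Finally, since $N\ge1$ forces $I_N\subset[-8,8]$, Hölder in time gives
\[
\big\|e^{it\Delta}\chi_{\le1}\big\|_{L^\varrho_tL^\sigma_x(I_N\times\R^d)}\le |I_N|^{1/\varrho}\sup_{t\in I_N}\|e^{it\Delta}\chi_{\le1}\|_{L^\sigma_x}\lesssim (N^{-\gamma})^{1/\varrho}=N^{-\gamma/\varrho},
\]
and combining with the two reductions above completes the proof. The only genuinely non-routine points are (i) the realization that one must superpose plane waves rather than estimate the localized datum directly, and (ii) obtaining the short-time $L^\sigma_x$ bound for $e^{it\Delta}\chi_{\le1}$ when $\sigma<2$, where no dispersive decay is available and one instead relies on the $x-2it\nabla$ vector field to keep weighted norms under control on the fixed interval $|t|\le8$; everything else is bookkeeping.
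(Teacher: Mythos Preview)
Your proof is correct and takes a genuinely different route from the paper. The paper writes out the explicit kernel $e^{it\Delta}(\chi_{\le1}P_Ng)=Ct^{-d/2}\iint e^{i|x-y|^2/4t+iy\cdot\xi}\chi_{\le1}(y)\chi_N(\xi)\hat g(\xi)\,dy\,d\xi$ and performs a stationary-phase decomposition in $y$: it splits according to the size of $|y-x+2t\xi|$ relative to $|t|^{1/2}$ via cutoffs $\chi_j$, handles the stationary region $j=0$ by size, and integrates by parts $K$ times in $y$ on the nonstationary pieces $j\ge1$, then interpolates between the resulting $L^\infty_tL^1_x$ and $L^\infty_tL^\infty_x$ bounds. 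You instead recognize that the $y$-integral for each fixed $\xi$ is exactly a Galilean boost of $e^{it\Delta}\chi_{\le1}$, so the whole stationary-phase machinery collapses to a single $\xi$-independent short-time estimate for a fixed Schwartz profile, which you then obtain via the pseudo-conformal weight $x-2it\nabla$ rather than the kernel. Your argument is considerably shorter and more conceptual, and it explains transparently \emph{why} no loss in $N$ occurs on the short interval (the modulated bump only translates rigidly); the paper's approach, on the other hand, is closer in spirit to the oscillatory-integral analysis used in the neighboring Lemmas~\ref{lem:g-1-in}--\ref{lem:g-1-out}, so it keeps the section methodologically uniform. Both proofs ultimately exploit the same physical fact---that the center of the wave packet moves with velocity $\sim\xi$---but yours packages it through symmetry while the paper unpacks it by hand.
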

\begin{proof}
By H\"older's inequality, we only need to prove
$$
\left\|e^{it\Delta}\big(\chi_{\le 1}P_Ng\big)\right\|_{L^\infty_tL^\sigma_x([-8N^{-\gamma},8N^{-\gamma}]\times \R^d)}
\lesssim
\|\chi_{\sim N}\hat g\|_{L^1}.
$$
Moreover, it follows from interpolation that
\begin{align*}
\big\|e^{it\Delta}&\big(\chi_{\le 1}P_Ng\big)\big\|_{L^\infty_tL^\sigma_x([-8N^{-\gamma},8N^{-\gamma}]\times \R^d)}\\
\lesssim&
\left\|e^{it\Delta}\big(\chi_{\le 1}P_Ng\big)\right\|_{L^\infty_{t}L^1_x([-8N^{-\gamma},8N^{-\gamma}]\times \R^d)}^\frac1\sigma
\left\|e^{it\Delta}\big(\chi_{\le 1}P_Ng\big)\right\|_{L^\infty_{tx}([-8N^{-\gamma},8N^{-\gamma}]\times \R^d)}^{1-\frac1\sigma}.
\end{align*}
Hence, it reduces to give control the $L^1_x$-norm and $L^\infty_x$-norm of $e^{it\Delta}\big(\chi_{\le 1}P_Ng\big)$, which should be uniform in time.
To this end, we write
$$
1=\chi_0(x,y)+\sum\limits_{j=1}^\infty\chi_j(x,y),
$$
for which
$$
\chi_0(x,y)=\chi_{\le 1}\Big(\frac{y-x+2t\xi}{2|t|^\frac12}\Big);
$$
and for $j\ge1$,
$$
\chi_j(x,y)=\chi_{\le 1}\Big(\frac{y-x+2t\xi}{2^{j+1}|t|^\frac12}\Big)-\chi_{\le 1}\Big(\frac{y-x+2t\xi}{2^j|t|^\frac12}\Big).
$$
By using the same formula for $e^{it\Delta}$ as in Lemma \ref{lem:formula-St}, we write
\begin{align}
e^{it\Delta}\big(\chi_{\le 1}P_Ng\big) =\frac{1}{(4\pi it)^\frac d2}\int_{\R^d}\!\!\int_{\R^d} e^{\frac{i|x-y|^2}{4t}+iy\cdot \xi}\chi_{\le 1}(y)\chi_{N}(\xi)\hat g(\xi)\,dyd\xi.
\label{10.10}
\end{align}
We further split $e^{it\Delta}\big(\chi_{\le 1}P_Ng\big)$ into the following parts,
\begin{align}
&\frac{1}{(4\pi it)^\frac d2}\int_{\R^d}\!\!\int_{\R^d} e^{\frac{i|x-y|^2}{4t}+iy\cdot \xi}\chi_0(x,y)\chi_{\le 1}(y)\chi_{N}(\xi)\hat g(\xi)\,dyd\xi\label{10.11}\\
&\quad+\sum\limits_{j=1}^\infty\frac{1}{(4\pi it)^\frac d2}\int_{\R^d}\!\!\int_{\R^d} e^{\frac{i|x-y|^2}{4t}+iy\cdot \xi}\chi_j(x,y)\chi_{\le 1}(y)\chi_{N}(\xi)\hat g(\xi)\,dyd\xi\label{10.12}.
\end{align}

For \eqref{10.11}, by the definition of $\chi_0$, we have
$$
|y-x+2t\xi|\lesssim |t|^{\frac12}.
$$
Then it follows from the H\"older inequality that
\begin{align}
\|\eqref{10.11}\|_{L^\infty_{x}}
\lesssim &
\frac{1}{|t|^\frac d2}\int_{\R^d}\!\!\int_{|y-x+2t\xi|\lesssim |t|^{\frac12}}\chi_{\le 1}(y)\chi_{N}(\xi)|\hat g(\xi)|\,dyd\xi\notag\\
\lesssim &\|\chi_{N}\hat g\|_{L^1}.\label{3.14}
\end{align}
On the other hand, notice that
\begin{align*}
\|\eqref{10.11}\|_{L^1_x}
\lesssim &
\frac{1}{|t|^\frac d2}\int_{\R^d}\!\!\int_{\R^d}\|\chi_0(\cdot,y)\|_{L^1_x}\chi_{\le 1}(y)\chi_{N}(\xi)|\hat g(\xi)|\,dyd\xi.
\end{align*}
and
$$\|\chi_0(\cdot,y)\|_{L^1_x}
\lesssim
|t|^\frac d2,
$$
then we obtain
\begin{align*}
\|\eqref{10.11}\|_{L^1_x}
\lesssim &
\|\chi_{N}\hat g\|_{L^1},
\end{align*}
which implies that
\begin{align}
\|\eqref{10.11}\|_{L^\infty_tL^1_x}
\lesssim &
\|\chi_{N}\hat g\|_{L^1}. \label{3.14'}
\end{align}
Interpolation between \eqref{3.14} and \eqref{3.14'}, we obtain that  for any $\sigma\ge 1$,
\begin{align}
\|\eqref{10.11}\|_{L^\infty_tL^\sigma_x}
\lesssim &
\|\chi_{N}\hat g\|_{L^1}. \label{3.16}
\end{align}

We next consider the estimate for \eqref{10.12}. For simiplicity,
 denote by
 $$\eqref{10.12}_j=\frac{1}{(4\pi it)^\frac d2}\int_{\R^d}\!\!\int_{\R^d} e^{\frac{i|x-y|^2}{4t}+iy\cdot \xi}\chi_j(x,y)\chi_{\le 1}(y)\chi_{N}(\xi)\hat g(\xi)\,dyd\xi.$$
 For each $j$,  denote by$$
\phi(y)=-\frac{x\cdot y}{2t}+\frac{|y|^2}{4t}+y\cdot \xi,
$$
the phase function. It follows that
\begin{align}
\nabla \phi(y)=\frac{y-x}{2t}+\xi, \label{15.08}
\end{align}
and
\begin{align}
\partial_{jk}\phi(y)=\frac{\delta_{jk}}{2t},\quad
\partial_{jkh}\phi(y)=0,\  \  \mbox{ for any}\quad  j,k,h\in\{1,2,3\}.\label{10.52}
\end{align}
Then
\begin{align*}
\eqref{10.12}_j =\frac{e^{\frac{i|x|^2}{4t}}}{(4\pi it)^\frac d2}\int_{\R^d}\!\!\int_{\R^d} e^{i\phi(y)}\chi_j(x,y)\chi_{\le 1}(y)\chi_{N}(\xi)\hat g(\xi)\,dyd\xi.
\end{align*}
By using the identity
\begin{align}
e^{i\phi}=\nabla_y e^{i\phi}\cdot\frac{\nabla_y\phi}{i|\nabla_y\phi|^2},\label{9.29}
\end{align}
and integrating by parts $K$ times, we obtain that for some $C_K\in \C$,
\begin{align}
\eqref{10.12}_j
= &
\frac{C_K e^{\frac{i|x|^2}{4t}}}{t^\frac d2}\int_{\R^d}\!\!\int_{\R^d} e^{i\phi(y)}\nabla_y\cdot\Big(\frac{\nabla_y\phi}{i|\nabla_y\phi|^2}\nabla_y\Big)^{K-1}\cdot\Big(\frac{\nabla_y\phi}{i|\nabla_y\phi|^2}\chi_j(x,y)\chi_{\le 1}(y)\Big)\,dy\chi_{N}(\xi)\hat g(\xi)\,d\xi. \label{10.03}
\end{align}
We shall first prove that
\begin{align}
\Big|\nabla_y\cdot\Big(\frac{\nabla_y\phi}{i|\nabla_y\phi|^2}\nabla_y\Big)^{K-1}\cdot\Big(\frac{\nabla_y\phi}{i|\nabla_y\phi|^2}\chi_j(x,y)\chi_{\le 1}(y)\Big)\Big|
\lesssim 2^{-2jK}\chi_{\sim 1}\Big(\frac{y-x+2t\xi}{2^j|t|^\frac12}\Big)\chi_{\lesssim 1}(y).\label{10.36}
\end{align}
To this end,  by using  Lemma \ref{lem:muli-Lei-formula}, we may expand the left-hand side of \eqref{10.36} as
\begin{equation*}
\sum\limits_{\begin{subarray}{c}
l_1,\cdots,l_K\in\R^d,l'\in\R^d;\\
|l_j|\le
j;|l_1|+\cdots+|l_K|+|l'|=K
\end{subarray}}
C_{l_1,\cdots,l_K,l'}\partial_y^{l_1}\Big(\frac{\nabla_y\phi}{i|\nabla_y\phi|^2}\Big)\cdots
\partial_y^{l_K}\Big(\frac{\nabla_y\phi}{i|\nabla_y\phi|^2}\Big)\>\partial_y^{l'}\big(\chi_j(x,y)\chi_{\le 1}(y)\big).
\end{equation*}
It follows from \eqref{10.52} that
$$
\Big|\partial_y^{l_1}\Big(\frac{\nabla_y\phi}{i|\nabla_y\phi|^2}\Big)\Big|\lesssim \frac{\big|\big(\nabla_y^2\phi\big)^{|l_1|}\big|}{|\nabla_y\phi|^{|l_1|+1}}\lesssim \frac{1}{|t|^{|l_1|}|\nabla_y\phi|^{|l_1|+1}}.
$$
Hence we obtain
\begin{align}
\Big|\partial_y^{l_1}\Big(\frac{\nabla_y\phi}{i|\nabla_y\phi|^2}\Big)\cdots
\partial_y^{l_K}\Big(\frac{\nabla_y\phi}{i|\nabla_y\phi|^2}\Big)\Big|
\lesssim \frac{1}{|t|^{|l_1|+\cdots+|l_K|}|\nabla_y\phi|^{|l_1|+\cdots+|l_K|+K}}.\label{11.28}
\end{align}
By the definition of $\chi_j$, we have
$$
|y-x+2t\xi|\sim 2^j |t|^{\frac12},
$$
which combined with \eqref{15.08} imples
$$
|\nabla_y \phi|\gtrsim 2^j|t|^{-\frac12}.
$$
Moreover, one has
\begin{align*}
\Big|\partial_y^{l'}\big(\chi_j(x,y)\chi_{\le 1}(y)\big)\Big|\lesssim 2^{-j|l'|}|t|^{-\frac12|l'|}\big(1+2^{j|l'|}|t|^{\frac12|l'|}\big)\chi_{\sim 1}\Big(\frac{y-x+2t\xi}{2^j|t|^\frac12}\Big)\chi_{\lesssim 1}(y).
\end{align*}
Then combining the estimates above, we have that
\begin{align*}
\Big|\partial_y^{l_1}&\Big(\frac{\nabla_y\phi}{i|\nabla_y\phi|^2}\Big)\cdots
\partial_y^{l_K}\Big(\frac{\nabla_y\phi}{i|\nabla_y\phi|^2}\Big)\>\partial_y^{l'}\big(\chi_j(x,y)\chi_{\le 1}(y)\big)\Big|\\
&\lesssim
\frac{1+2^{j|l'|}|t|^{\frac12|l'|}}{2^{j|l'|}|t|^{|l_1|+\cdots+|l_K|+\frac12|l'|}|\nabla_y\phi|^{|l_1|+\cdots+|l_K|+K}}\cdot\chi_{\sim 1}\Big(\frac{y-x+2t\xi}{2^j|t|^\frac12}\Big)\chi_{\lesssim 1}(y)\\
&\quad \lesssim
\frac{1+2^{j|l'|}|t|^{\frac12|l'|}}{|t|^{\frac12(|l_1|+\cdots+|l_K|+|l'|-K)}2^{j(|l_1|+\cdots+|l_K|+|l'|+K\big)}}\cdot\chi_{\sim 1}\Big(\frac{y-x+2t\xi}{2^j|t|^\frac12}\Big)\chi_{\lesssim 1}(y).
\end{align*}
Since $|l_1|+\cdots+|l_K|+|l'|=K$, it is further controlled by
$$
2^{-K j}\chi_{\sim 1}\Big(\frac{y-x+2t\xi}{2^j|t|^\frac12}\Big)\chi_{\lesssim 1}(y),
$$
which gives \eqref{10.36}.

Now by using \eqref{10.36} and the fact
$$
|y-x+2t\xi|\sim 2^j|t|^{\frac12},
$$
we obtain that
\begin{align*}
\Big\|\nabla_y\cdot &\Big(\frac{\nabla_y\phi}{i|\nabla_y\phi|^2}\nabla_y\Big)^{K-1}\cdot\Big(\frac{\nabla_y\phi}{i|\nabla_y\phi|^2}\chi_j(x,y)\chi_{\le 1}(y)\Big)\Big\|_{L^1_yL^1_x}\\
&+\Big\|\nabla_y\cdot\Big(\frac{\nabla_y\phi}{i|\nabla_y\phi|^2}\nabla_y\Big)^{K-1}\cdot\Big(\frac{\nabla_y\phi}{i|\nabla_y\phi|^2}\chi_j(x,y)\chi_{\le 1}(y)\Big)\Big\|_{L^\infty_xL^1_y}
\lesssim 2^{-K j+3j}|t|^{\frac d2}.
\end{align*}
Then it follows that
\begin{align*}
\|\eqref{10.12}_j\|_{L^\infty_{tx}}+\|\eqref{10.12}_j\|_{L^\infty_{t}L^1_x}
\lesssim_K &
2^{3j-K j}\|\chi_{N}\hat g\|_{L^1}.
\end{align*}
Choosing $K=4$ and using interpolation, we get that for any $\sigma\ge 1$,
\begin{align*}
\|\eqref{10.12}_j\|_{L^\infty_tL^\sigma_x}
\lesssim &
2^{-j}\|\chi_{N}\hat g\|_{L^1},
\end{align*}
and then
\begin{align}
\|\eqref{10.12}\|_{L^\infty_tL^\sigma_x}
\lesssim &\|\chi_{N}\hat g\|_{L^1}. \label{3.15}
\end{align}

Combining \eqref{3.16}  and \eqref{3.15}, we obtain
$$
\left\|e^{it\Delta}\big(\chi_{\le 1}P_Ng\big)\right\|_{L^\infty_tL^\sigma_x([-\frac8{N^\gamma},\frac8{N^\gamma}]\times \R^d)}
\lesssim \|\chi_{N}\hat g\|_{L^1}.
$$
Hence, we finish the proof.
\end{proof}

The second lemma shows that the linear flow $e^{it\Delta}$ become outgoing after a short time.
\begin{lem}\label{lem:g-1-in}
Let $N\ge 1$. Then for any $\gamma\in (0,1]$ and any $t$ with $|t|\ge 8N^{-\gamma}$,
\begin{align*}
\Big|\chi_{\le \frac18N|t|}(x)\Big(e^{it\Delta}\big(\chi_{\le 1}P_Ng\big)\Big)(x)\Big|
\lesssim
\langle t\rangle^{-\frac d2}N^{-100}\big\|\chi_{N}\hat g\big\|_{L^1}.
\end{align*}
\end{lem}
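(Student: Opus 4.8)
The plan is a non-stationary phase argument in the spatial variable of the Schr\"odinger kernel. Since $\chi_{\le\frac18N|t|}(x)$ is supported in $\{|x|\le\frac{11}{80}N|t|\}$, it suffices to prove the stated pointwise bound for $x$ in this region. Starting from the kernel representation of Lemma~\ref{lem:formula-St} (cf.\ \eqref{10.10}) and pulling out the factor $e^{i|x|^2/4t}$, write
\begin{align*}
e^{it\Delta}\big(\chi_{\le 1}P_Ng\big)(x)=\frac{e^{i|x|^2/4t}}{(4\pi it)^{d/2}}\int_{\R^d}\Big(\int_{\R^d}e^{i\phi(y,\xi)}\chi_{\le 1}(y)\,dy\Big)\chi_N(\xi)\hat g(\xi)\,d\xi,
\end{align*}
with the same phase as in the proof of Lemma~\ref{lem:g-1-short}, namely $\phi(y,\xi)=-\frac{x\cdot y}{2t}+\frac{|y|^2}{4t}+y\cdot\xi$, so that $\nabla_y\phi=\frac{y-x}{2t}+\xi$ by \eqref{15.08} and $\nabla_y^2\phi=\frac1{2t}\,\mathrm{Id}$, $\nabla_y^3\phi\equiv 0$ by \eqref{10.52}. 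Since only $\hat g\in L^1$, no integration by parts in $\xi$ is possible; I integrate by parts in $y$ alone and estimate the $\xi$-integral trivially by $\|\chi_N\hat g\|_{L^1}$. The point that makes the dyadic decomposition of the previous proof unnecessary here is that on the present region the phase is everywhere non-stationary in $y$: the stationary point $y=x-2t\xi$ lies far outside $\{|y|\lesssim 1\}$.

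Indeed, on the support of the integrand one has $|y|\le\frac{11}{10}$ and $|\xi|\ge N$, and the hypotheses $|t|\ge 8N^{-\gamma}$, $\gamma\le1$, $N\ge1$ force $N|t|\ge 8$, i.e.\ $|t|^{-1}\le N/8$; hence for $|x|\le\frac{11}{80}N|t|$,
\begin{align*}
|\nabla_y\phi|\ge|\xi|-\frac{|y|+|x|}{2|t|}\ge N-\frac{11}{20|t|}-\frac{11N}{160}\ge N-\frac{11N}{160}-\frac{11N}{160}\ge\frac N2.
\end{align*}
Now integrate by parts $K$ times in $y$ exactly as in \eqref{10.03}--\eqref{10.36}, applying Lemma~\ref{lem:muli-Lei-formula} with $f=\frac{\nabla_y\phi}{i|\nabla_y\phi|^2}$ and $g=\chi_{\le1}$. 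Because $\nabla_y^3\phi=0$ one has $|\partial_y^l f|\lesssim|t|^{-|l|}|\nabla_y\phi|^{-|l|-1}\lesssim|t|^{-|l|}N^{-|l|-1}$, while $|\partial_y^{l'}\chi_{\le1}|\lesssim1$ with support in $\{|y|\lesssim1\}$; collecting terms, the amplitude after $K$ integrations by parts is $\lesssim_K\sum_{0\le m\le K}|t|^{-m}N^{-m-K}\chi_{\lesssim1}(y)$, and using $|t|^{-m}\le8^{-m}N^{\gamma m}\le N^{m}$ each term is $\lesssim N^{-K}\chi_{\lesssim1}(y)$. Integrating in $y$ over $\{|y|\lesssim1\}$ and then bounding the $\xi$-integral by $\|\chi_N\hat g\|_{L^1}$ yields
\begin{align*}
\Big|\chi_{\le\frac18N|t|}(x)\big(e^{it\Delta}(\chi_{\le1}P_Ng)\big)(x)\Big|\lesssim_K\frac{N^{-K}}{|t|^{d/2}}\|\chi_N\hat g\|_{L^1}.
\end{align*}

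It remains to convert $N^{-K}|t|^{-d/2}$ into $\langle t\rangle^{-d/2}N^{-100}$. For $|t|\ge1$ one has $|t|^{-d/2}\sim\langle t\rangle^{-d/2}$ and $N^{-K}\le N^{-100}$ as soon as $K\ge100$; for $8N^{-\gamma}\le|t|\le1$ one has $\langle t\rangle\sim1$ and $|t|^{-d/2}\le8^{-d/2}N^{\gamma d/2}\le N^{5/2}$ (using $d\le5$ and $\gamma\le1$), so that $N^{-K}|t|^{-d/2}\lesssim N^{-K+5/2}\le N^{-100}$ provided $K\ge103$. Taking $K=103$ (an absolute constant depending only on $d$) completes the argument. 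The only genuinely delicate step is the lower bound $|\nabla_y\phi|\gtrsim N$, which uses \emph{both} $|t|\ge8N^{-\gamma}$ (so that $N|t|\ge8$ and the contributions $\frac{|y|}{2|t|},\frac{|x|}{2|t|}$ to $\nabla_y\phi$ amount to only a small fraction of $N$) \emph{and} the spatial cutoff $|x|\lesssim N|t|$ --- heuristically, the frequency-$N$ packet has already propagated out to $|x|\sim2N|t|$, so the cutoff region sees no stationary point. Everything afterwards is routine non-stationary phase, the one point of care being to absorb the short-time singularity $|t|^{-d/2}$ into a fixed large negative power of $N$ by means of $|t|\ge8N^{-\gamma}$ together with $\gamma\le1$.
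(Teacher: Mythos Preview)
Your proof is correct and follows essentially the same route as the paper: write the kernel with phase $\phi(y)=-\frac{x\cdot y}{2t}+\frac{|y|^2}{4t}+y\cdot\xi$, verify the non-stationary bound $|\nabla_y\phi|\gtrsim N$ on the relevant region, integrate by parts $K$ times in $y$ via Lemma~\ref{lem:muli-Lei-formula}, and finally absorb the $|t|^{-d/2}$ factor into a power of $N$ using $|t|\gtrsim N^{-1}$. Your constants are slightly more explicit, but the argument is the same.
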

\begin{proof}
As in the proof of the previous lemma,  denote by $\phi$
$$
\phi(y)=-\frac{x\cdot y}{2t}+\frac{|y|^2}{4t}+y\cdot \xi,
$$
the phase function. Then it follows from \eqref{10.10} that
\begin{align}
e^{it\Delta}\big(\chi_{\le 1}P_Ng\big) =\frac{1}{(4\pi it)^\frac d2}e^{\frac{i|x|^2}{4t}}\int_{\R^d}\!\!\int_{\R^d} e^{i\phi(y)}\chi_{\le 1}(y)\chi_{N}(\xi)\hat g(\xi)\,dyd\xi.
\label{formula:Normal-2}
\end{align}
Notice that when $|y|\le\frac32, |\xi|\ge \frac34 N$, $t\ge \frac 8N$ and $|x|\le \frac3{16}N|t|$,  we have
\begin{align}
|\nabla_y \phi(y)|\gtrsim N. \label{11.38}
\end{align}
By applying the identity \eqref{9.29}
and integrating by parts in $y$-variable $K$ times, we obtain
\begin{align}
e^{it\Delta}\big(\chi_{\le 1}P_Ng\big) =\frac{C_K}{|t|^\frac d2}&e^{\frac{i|x|^2}{4t}}\int_{\R^d}\!\!\int_{\R^d} e^{i\phi(y)}\nabla_y\cdot\Big(\frac{\nabla_y\phi}{i|\nabla_y\phi|^2}\nabla_y\Big)^{K-1}\notag\\
&\cdot\Big(\frac{\nabla_y\phi}{i|\nabla_y\phi|^2}\chi_{\le 1}(y)\Big)\chi_{N}(\xi)\hat g(\xi)\,dyd\xi,
\label{formula:Normal-3}
\end{align}
for some constant $C_K\in \C$. Note that by Lemma \ref{lem:muli-Lei-formula}, one has
\begin{align*}
\nabla_y\cdot&\Big(\frac{\nabla_y\phi}{i|\nabla_y\phi|^2}\nabla_y\Big)^{K-1}\cdot\Big(\frac{\nabla_y\phi}{i|\nabla_y\phi|^2}\chi_{\le 1}(y)\Big)\\
&=
\sum\limits_{\begin{subarray}{c}
l_1,\cdots,l_K\in\R^d,l'\in\R^d;\\
|l_j|\le
j;|l_1|+\cdots+|l_K|+|l'|=K
\end{subarray}}
C_{l_1,\cdots,l_K,l'}\partial_y^{l_1}\Big(\frac{\nabla_y\phi}{i|\nabla_y\phi|^2}\Big)\cdots
\partial_y^{l_K}\Big(\frac{\nabla_y\phi}{i|\nabla_y\phi|^2}\Big)\>\partial_y^{l'}\big(\chi_{\le 1}(y)\big).
\end{align*}
Then by using the estimates in \eqref{11.28}, \eqref{11.38} and the definition for $\chi_{\leq1}$, we  obtain that
\begin{align*}
\Big|\partial_y^{l_1}\Big(\frac{\nabla_y\phi}{i|\nabla_y\phi|^2}\Big)\cdots
\partial_y^{l_K}&\Big(\frac{\nabla_y\phi}{i|\nabla_y\phi|^2}\Big)\Big|
\lesssim \frac{1}{|t|^{|l_1|+\cdots+|l_K|}|\nabla_y\phi|^{|l_1|+\cdots+|l_K|+K}}\\
&\lesssim
\frac{1}{(|t|N)^{|l_1|+\cdots+|l_K|}N^K}
\lesssim N^{-K},
\end{align*}
and
$$
\partial_y^{l'}\big(\chi_{\le 1}(y)\big)\lesssim \chi_{\lesssim 1}(y),
$$
which imply that
$$
\Big|\nabla_y\cdot\Big(\frac{\nabla_y\phi}{i|\nabla_y\phi|^2}\nabla_y\Big)^{K-1}\cdot\Big(\frac{\nabla_y\phi}{i|\nabla_y\phi|^2}\chi_{\le 1}(y)\Big)\Big|
\lesssim N^{-K}\chi_{\lesssim 1}(y).
$$
Hence it follows \eqref{formula:Normal-3} and the estimate above that
\begin{align*}
\Big|e^{it\Delta}\big(\chi_{\le 1}P_Ng\big)\Big|
\lesssim_K &
 \frac{1}{|t|^\frac d2}N^{-K}\int_{\R^d}\!\!\int_{\R^d} \chi_{\le 1}(y)\,dy\chi_{N}(\xi)\big|\hat g(\xi)\big|\,d\xi\\
\lesssim_K &
\frac{1}{|t|^\frac d2}N^{-K} \big\|\chi_{N}\hat g\big\|_{L^1}.
\end{align*}
Moreover, since $|t|\gtrsim \frac1N$, we have
\begin{align*}
\Big|e^{it\Delta}\big(\chi_{\le 1}P_Ng\big)\Big|
\lesssim_K
\frac{1}{\langle t\rangle^\frac d2}N^{-K+\frac32}\big\|\chi_{N}\hat g\big\|_{L^1}.
\end{align*}
Now by choosing $K$ large enough, we could  obtain the desired estimate.
\end{proof}

We next prove the estimate for linear flow out the ball $B(0,\frac{1}{8}N|t|)$, which is a consequence of the radial Sobolev embedding theorem.
\begin{lem}\label{lem:g-1-out}
For any $\varrho,\sigma\ge 1$ satisfying  $\frac1\varrho+\frac{d-1}\sigma<\frac{d-1}{2}$, and any $\gamma\ge 0$,  the following estimate holds,
\begin{align*}
\Big\|\chi_{\ge \frac18N|t|}(x)e^{it\Delta}\big(\chi_{\le 1}P_Ng\big)\Big\|_{L^\varrho_tL^\sigma_x(\{|t|\ge 8N^{-\gamma}\}\times \R^d)}
\lesssim N^{\big[(d-1)\gamma-(d-2)\big](\frac12-\frac1\sigma)-\frac\gamma \varrho}\big\|\chi_{N}\hat g\big\|_{L^1}.
\end{align*}
\end{lem}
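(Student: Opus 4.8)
The plan is to exploit the spatial localization $|x|\gtrsim N|t|$ on the support of the cutoff: there one can trade the weight $|x|^{\alpha}$ for a gain of $\alpha$ derivatives by the radial Sobolev embedding, and then integrate the resulting time decay. Set $\beta=\frac12-\frac1\sigma$ and $\alpha=(d-1)\beta$; the hypothesis $\frac1\varrho+\frac{d-1}\sigma<\frac{d-1}2$ is exactly $\alpha\varrho>1$, so in particular $\alpha>0$, $\sigma>2$, and $0<\beta<\frac12$. On the support of $\chi_{\ge\frac18N|t|}(\cdot)$ one has $8|x|\ge N|t|$, hence $\chi_{\ge\frac18N|t|}(x)\le\big(8|x|/(N|t|)\big)^{\alpha}$ for $\alpha\ge0$, and therefore
$$
\big\|\chi_{\ge\frac18N|t|}(x)\,e^{it\Delta}\big(\chi_{\le1}P_Ng\big)\big\|_{L^\sigma_x}\lesssim (N|t|)^{-\alpha}\big\||x|^\alpha e^{it\Delta}\big(\chi_{\le1}P_Ng\big)\big\|_{L^\sigma_x}.
$$

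Since $g$ is radial and $\chi_{\le1}$, $P_N$, $e^{it\Delta}$ all preserve radiality, one can apply Lemma \ref{lem:radial-Sob} with $p=2$, $q=\sigma$, $s=\beta$ and weight $\alpha=(d-1)\beta$: one checks $\alpha+s=d\beta=d(\frac1p-\frac1q)$, $0<s<1<d$, $\frac1q\le\frac1p=\frac1q+s$ and $\alpha>-\frac dq$, the only borderline equality being $\frac1p=\frac1q+s$, which is permitted. Using then that $e^{it\Delta}$ is unitary on $L^2$ and commutes with $|\nabla|^s$,
$$
\big\||x|^\alpha e^{it\Delta}\big(\chi_{\le1}P_Ng\big)\big\|_{L^\sigma_x}\lesssim\big\||\nabla|^s e^{it\Delta}\big(\chi_{\le1}P_Ng\big)\big\|_{L^2_x}=\big\||\nabla|^s\big(\chi_{\le1}P_Ng\big)\big\|_{L^2_x}.
$$
Since $\chi_{\le1}P_Ng$ is supported in $\{|x|\lesssim1\}$, interpolating between $\|\chi_{\le1}P_Ng\|_{L^2(|x|\lesssim1)}$ and $\|\nabla(\chi_{\le1}P_Ng)\|_{L^2(|x|\lesssim1)}$ exactly as in the proof of Lemma \ref{lem:low-fre}, and invoking $\|\nabla^jP_Ng\|_{L^\infty}\lesssim N^j\|\chi_N\hat g\|_{L^1}$ for $j=0,1$ (Bernstein and Fourier inversion), one obtains $\big\||\nabla|^s(\chi_{\le1}P_Ng)\big\|_{L^2}\lesssim N^s\|\chi_N\hat g\|_{L^1}$ because $0<s<1$.

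Combining the three displays, for $|t|\ge8N^{-\gamma}$ one gets the pointwise-in-time bound $\big\|\chi_{\ge\frac18N|t|}(x)e^{it\Delta}(\chi_{\le1}P_Ng)\big\|_{L^\sigma_x}\lesssim N^{s-\alpha}|t|^{-\alpha}\|\chi_N\hat g\|_{L^1}$. Taking the $L^\varrho_t$-norm over $\{|t|\ge8N^{-\gamma}\}$, the hypothesis $\alpha\varrho>1$ yields $\big\||t|^{-\alpha}\big\|_{L^\varrho_t(\{|t|\ge8N^{-\gamma}\})}\lesssim N^{\gamma(\alpha-1/\varrho)}$, so the total power of $N$ is $s-\alpha+\gamma\alpha-\gamma/\varrho$; substituting $s=\beta$ and $\alpha=(d-1)\beta$ this equals $[(d-1)\gamma-(d-2)](\frac12-\frac1\sigma)-\frac\gamma\varrho$, which is precisely the claimed estimate. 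There is no serious obstacle in this argument; the only point requiring care is the derivative bookkeeping at the end of the middle paragraph, namely that the spatial cutoff $\chi_{\le1}$ does not spoil the frequency localization of $P_Ng$ when one measures a homogeneous Sobolev norm — this is handled exactly as in the proof of Lemma \ref{lem:low-fre}. The endpoint $\sigma=\infty$, where Lemma \ref{lem:radial-Sob} would have two equalities, is recovered by a trivial interpolation with a slightly smaller value of $\sigma$.
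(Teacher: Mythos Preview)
Your proof is correct and follows essentially the same route as the paper: bound the cutoff by the weight $(|x|/(N|t|))^{\alpha}$ with $\alpha=(d-1)(\tfrac12-\tfrac1\sigma)$, apply the radial Sobolev embedding $\||x|^{\alpha}u\|_{L^\sigma}\lesssim\||\nabla|^{\beta}u\|_{L^2}$, use the $L^2$-isometry of $e^{it\Delta}$, and then bound $\||\nabla|^{\beta}(\chi_{\le1}P_Ng)\|_{L^2}\lesssim N^{\beta}\|\chi_N\hat g\|_{L^1}$ before integrating the decay $|t|^{-\alpha}$ in time. Your treatment is in fact slightly more careful than the paper's in two places: you make explicit the pointwise bound $\chi_{\ge\frac18N|t|}(x)\le(8|x|/(N|t|))^{\alpha}$ (so that the fractional derivative never falls on the spatial cutoff), and you flag the endpoint $\sigma=\infty$ where Lemma~\ref{lem:radial-Sob} has two borderline equalities.
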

\begin{proof}
It follows from the radial Sobolev embedding (see Lemma \ref{lem:radial-Sob}) that
\begin{align}
\Big\|\chi_{\ge \frac18N|t|}(x)e^{it\Delta}\big(\chi_{\le 1}P_Ng\big)\Big\|_{L^\sigma_x(\R^d)}
\lesssim &
\big(N|t|\big)^{-(d-1)\big(\frac12-\frac1\sigma\big)}\left\||\nabla|^{\frac12-\frac1\sigma}e^{it\Delta}\big(\chi_{\le 1}P_Ng\big)\right\|_{L^2_x(\R^d)}\notag\\
\lesssim &
\big(N|t|\big)^{-(d-1)\big(\frac12-\frac1\sigma\big)}\left\||\nabla|^{\frac12-\frac1\sigma}\chi_{\le 1}P_Ng\right\|_{L^2_x(\R^d)}.\label{20.31}
\end{align}
Similarly to the proof of \eqref{15.43}, we have
$$
\left\||\nabla|^{\frac12-\frac1\sigma}\chi_{\le 1}P_Ng\right\|_{L^2_x(\R^d)}
\lesssim
N^{\frac12-\frac1\sigma}\left\|\chi_{\lesssim 1}P_Ng\right\|_{L^2_x(\R^d)}.
$$
Then it follows from H\"older's and Young's inequalities that
$$
\left\|\chi_{\lesssim 1}P_Ng\right\|_{L^2_x(\R^d)}
\lesssim
\left\|\chi_{\lesssim 1}P_Ng\right\|_{L^\infty_x(\R^d)}
\lesssim
\big\|\chi_{N}\hat g\big\|_{L^1(\R^d)},
$$
which leads to
$$
\left\||\nabla|^{\frac12-\frac1\sigma}\chi_{\le 1}P_Ng\right\|_{L^2_x(\R^d)}
\lesssim
N^{\frac12-\frac1\sigma}\big\|\chi_{N}\hat g\big\|_{L^1(\R^d)}.
$$
Combining the above three inequalities with \eqref{20.31}, we obtain that
\begin{align*}
\Big\|\chi_{\ge \frac18N|t|}(x)e^{it\Delta}\big(\chi_{\le 1}P_Ng\big)\Big\|_{L^\sigma_x(\R^d)}
\lesssim &
N^{-(d-2)\big(\frac12-\frac1\sigma\big)}|t|^{-(d-1)\big(\frac12-\frac1\sigma\big)}\big\|\chi_{N}\hat g\big\|_{L^1(\R^d)}.
\end{align*}
Note that
$$
\frac1\varrho<(d-1)\big(\frac12-\frac1\sigma\big),
$$
then it follows
\begin{align*}
\Big\|\chi_{\ge \frac18N|t|}(x)e^{it\Delta}\big(\chi_{\le 1}P_Ng\big)\Big\|_{L^\varrho_tL^\sigma_x(\{|t|\ge 8N^{-\gamma}\}\times \R^d)}
\lesssim N^{\big[(d-1)\gamma-(d-2)\big](\frac12-\frac1\sigma)-\frac\gamma \varrho}\big\|\chi_{N}\hat g\big\|_{L^1(\R^d)},
\end{align*}
which finishes the proof.
\end{proof}

By Lemmas \ref{lem:g-1-short}-\ref{lem:g-1-out}, we could prove the Strichartz estimates for each frequency of function $g$, which is stated as follows.
\begin{cor}\label{cor:g-1-high-long}
For any $\varrho\ge 1,\sigma\ge 2$ satisfying  $\frac1\varrho+\frac{d-1}\sigma<\frac{d-1}{2}$, we have\begin{align*}
\Big\|e^{it\Delta}\big(\chi_{\le 1}P_Ng\big)\Big\|_{L^\varrho_tL^\sigma_x(\R\times \R^d)}
\lesssim N^{-\frac{d-2}{(d-1)\varrho}}\big\|\chi_{N}\hat g\big\|_{L^1}.
\end{align*}
\end{cor}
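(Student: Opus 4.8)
The plan is to split the time axis at the scale $|t|\sim N^{-\gamma}$ with $\gamma=\frac{d-2}{d-1}$ (this lies in $(0,1]$ for $d\ge 3$, so that all three lemmas below apply; for $d=1,2$ one would enlarge $\gamma$ slightly at the cost of a harmless power of $N$), and then, on the long-time part $\{|t|\ge 8N^{-\gamma}\}$, to cut physical space using the partition $1=\chi_{\le\frac18 N|t|}+\chi_{\ge\frac18 N|t|}$. This produces three contributions, governed respectively by Lemma~\ref{lem:g-1-short} (short time), Lemma~\ref{lem:g-1-in} (long time, inside the ball), and Lemma~\ref{lem:g-1-out} (long time, outside the ball). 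The whole point of choosing this particular $\gamma$ is that with it the short-time and outer contributions will each produce exactly the target rate $N^{-\frac{d-2}{(d-1)\varrho}}$, while the inner contribution will be much smaller.

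For the short-time piece I would invoke Lemma~\ref{lem:g-1-short} directly: it bounds the $L^\varrho_tL^\sigma_x(\{|t|\le 8N^{-\gamma}\}\times\R^d)$ norm by $N^{-\gamma/\varrho}\|\chi_N\hat g\|_{L^1}$, and $-\gamma/\varrho=-\frac{d-2}{(d-1)\varrho}$ by the choice of $\gamma$. For the outer long-time piece, note that the hypothesis $\frac1\varrho+\frac{d-1}\sigma<\frac{d-1}2$ is precisely the hypothesis of Lemma~\ref{lem:g-1-out}, which gives a bound by $N^{[(d-1)\gamma-(d-2)](\frac12-\frac1\sigma)-\gamma/\varrho}\|\chi_N\hat g\|_{L^1}$; since $(d-1)\gamma=d-2$ the bracket vanishes and the exponent again collapses to $-\frac{d-2}{(d-1)\varrho}$.

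For the inner long-time piece, Lemma~\ref{lem:g-1-in} furnishes the pointwise bound $\langle t\rangle^{-d/2}N^{-100}\|\chi_N\hat g\|_{L^1}$ on the ball $\{|x|\lesssim N|t|\}$, whose volume is $\lesssim(N|t|)^d$; hence the $L^\sigma_x$ norm is $\lesssim\langle t\rangle^{-d/2}(N|t|)^{d/\sigma}N^{-100}\|\chi_N\hat g\|_{L^1}$, and taking the $L^\varrho_t$ norm over $\{|t|\ge 8N^{-\gamma}\}$ reduces matters to the finiteness of $\int\langle t\rangle^{-\varrho d/2}|t|^{\varrho d/\sigma}\,dt$. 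This integral converges: near $t=0$ the integrand is $\sim|t|^{\varrho d/\sigma}$ with nonnegative exponent, while near $t=\infty$ it is $\sim|t|^{-\varrho d(\frac12-\frac1\sigma)}$ with $\varrho d(\frac12-\frac1\sigma)>\varrho(d-1)(\frac12-\frac1\sigma)>1$, the last step again using the hypothesis. So this piece is $\lesssim N^{d/\sigma-100}\|\chi_N\hat g\|_{L^1}$, which is negligible compared with $N^{-\frac{d-2}{(d-1)\varrho}}\|\chi_N\hat g\|_{L^1}$ (the former exponent is $<-1$, the latter is $>-1$). Summing the three estimates then yields the corollary.

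The substantive work has already been done inside Lemmas~\ref{lem:g-1-short}--\ref{lem:g-1-out}; the only delicate point in the assembly is the choice of $\gamma$. It has to be large enough for the short-time smoothing of Lemma~\ref{lem:g-1-short} to beat the target rate, yet the outgoing-tail bound of Lemma~\ref{lem:g-1-out} forces $(d-1)\gamma\le d-2$ lest that piece grow in $N$; the value $\gamma=\frac{d-2}{d-1}$ is the unique one saturating both constraints, and it sits comfortably inside the admissible range $(0,1]$ of Lemmas~\ref{lem:g-1-short}--\ref{lem:g-1-in}. The dispersive hypothesis $\frac1\varrho+\frac{d-1}\sigma<\frac{d-1}2$ is used a second time only to secure the time-integrability in the inner piece.
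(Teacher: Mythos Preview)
Your proof is correct and follows essentially the same approach as the paper: split time at scale $N^{-\gamma}$, further decompose the long-time piece via $\chi_{\le\frac18 N|t|}+\chi_{\ge\frac18 N|t|}$, apply Lemmas~\ref{lem:g-1-short}--\ref{lem:g-1-out} to the three resulting pieces, and choose $\gamma=\frac{d-2}{d-1}$ so that the short-time and outer contributions both hit the target exponent $-\frac{d-2}{(d-1)\varrho}$. The only cosmetic difference is that the paper first records the combined long-time bound for general $\gamma\in(0,1]$ and then specializes, whereas you fix $\gamma$ from the outset and add the (correct) observation that this value is forced by saturating both constraints.
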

\begin{proof}
First, we claim that for $\gamma\in (0,1]$,
\begin{align}
\Big\|e^{it\Delta}\big(\chi_{\le 1}P_Ng\big)\Big\|_{L^\varrho_tL^\sigma_x(\{|t|\ge 8N^{-\gamma}\}\times \R^d)}
\lesssim N^{\big[(d-1)\gamma-(d-2)\big](\frac12-\frac1\sigma)-\frac\gamma \varrho}\big\|\chi_{N}\hat g\big\|_{L^1}.\label{1.36}
\end{align}
Indeed, it follows from Lemma \ref{lem:g-1-in} that
\begin{align*}
\Big\|\chi_{\le \frac18N|t|}&(x)\Big(e^{it\Delta}\big(\chi_{\le 1}P_Ng\big)\Big)(x)\Big\|_{L^\varrho_tL^\sigma_x(\{|t|\ge 8N^{-\gamma}\}\times \R^d)}\\
&\lesssim
\Big\|\big(N|t|\big)^{\frac d\sigma}\chi_{\le \frac18N|t|}(x)\Big(e^{it\Delta}\big(\chi_{\le 1}P_Ng\big)\Big)(x)\Big\|_{L^\varrho_tL^\infty_x(\{|t|\ge 8N^{-\gamma}\}\times \R^d)}\\
&\qquad\lesssim
\Big\|\langle t\rangle^{-\frac d2+\frac d\sigma}N^{-100+\frac d\sigma}\Big\|_{L^\varrho_t(\{|t|\ge 8N^{-\gamma}\})}\big\|\chi_{N}\hat g\big\|_{L^1}
\lesssim
N^{-50}\big\|\chi_{N}\hat g\big\|_{L^1},
\end{align*}
which together with Lemma \ref{lem:g-1-out} gives \eqref{1.36}.

 Now  we recall from Lemma \ref{lem:g-1-short} that for $\gamma\in (0,1]$,
\begin{align}
\left\|e^{it\Delta}\big(\chi_{\le 1}P_Ng\big)\right\|_{L^\varrho_tL^\sigma_x([-8N^{-\gamma},8N^{-\gamma}]\times \R^d)}
\lesssim
N^{-\frac\gamma \varrho}\|\chi_{\sim N}\hat g\|_{L^1}.\label{1.37}
\end{align}
Choosing $\gamma=\frac{d-2}{d-1}$ and then
$$
\big[(d-1)\gamma-(d-2)\big](\frac12-\frac1\sigma)-\frac\gamma \varrho=-\frac\gamma \varrho,
$$
Hence by \eqref{1.36} and \eqref{1.37}, we obtain that
\begin{align*}
\Big\|e^{it\Delta}&\big(\chi_{\le 1}P_Ng\big)\Big\|_{L^\varrho_tL^\sigma_x(\{|t|\ge 8N^{-\gamma}\}\times \R^d)}+\left\|e^{it\Delta}\big(\chi_{\le 1}P_Ng\big)\right\|_{L^\varrho_tL^\sigma_x([-8N^{-\gamma},8N^{-\gamma}]\times \R^d)}\\
&\lesssim
N^{-\frac{d-2}{(d-1)\varrho}}\|\chi_{\sim N}\hat g\|_{L^1},
\end{align*}
which gives the desired estimates and  completes the proof of the corollary.
\end{proof}

\begin{proof}[Proof of Proposition \ref{prop:lineares-thm2}]
Since $g$ is supported in the unit ball, we abuse the notation and write $g=\chi_{\le 1}g$. Then by the Littlewood-Paley decomposition,
\begin{align*}
\big\|e^{it\Delta}g&\big\|_{L^\varrho_tL^\sigma_x(\R\times \R^d)}
\le
\Big\|e^{it\Delta}\big(\chi_{\le 1}P_{\le 1}g\big)\Big\|_{L^\varrho_tL^\sigma_x(\R\times \R^d)}+\sum\limits_{N\ge 1} \Big\|e^{it\Delta}\big(\chi_{\le 1}P_Ng\big)\Big\|_{L^\varrho_tL^\sigma_x(\R\times \R^d)}.
\end{align*}
By Lemma \ref{lem:low-fre} and Lemma \ref{cor:g-1-high-long}, it is further controlled by
\begin{align*}
\big\|\chi_{\le 1}\hat g\big\|_{L^1(\R^d)}+\sum\limits_{N\ge 1} N^{-\frac{d-2}{(d-1)\varrho}}\big\|\chi_{N}\hat g\big\|_{L^1(\R^d)}
\lesssim
\big\|\langle \xi\rangle^{-\frac{d-2}{(d-1)\varrho}+}\hat g\big\|_{L^1(\R^d)}.
\end{align*}
This finishes the proof of the
proposition.
\end{proof}

\section{The linear estimates for the infinite bubbles}

In this section, we introduce the Strichartz estimates for the free flow $e^{it\Delta}h$ with $h$ satisfying Assumption \ref{ass-f-2}. The following is the main result of this section.

\begin{prop}\label{prop:bubbles}
Let $s_c> 0$, $\gamma\in [0,s_c)$ and $q_\gamma=\frac{2(d+2)}{d-2(s_c-\gamma)}$. Assume that $h$ is the function satisfying Assumption \ref{ass-f-2}. Then
\begin{align*}
\big\||\nabla|^{\gamma}e^{it\Delta}h\big\|_{L^{q_\gamma}_{tx}(\R\times \R^d)}
\lesssim \epsilon^{\frac{s_c-\gamma}{d}}\|h\|_{\dot H^{s_c}(\R^d)}.
\end{align*}
\end{prop}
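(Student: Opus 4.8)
The plan is to reduce, by a Littlewood--Paley square function argument, to the single-bubble estimate
$$\big\||\nabla|^{\gamma}e^{it\Delta}h_k\big\|_{L^{q_\gamma}_{tx}(\R\times\R^d)}\lesssim \epsilon^{\frac{s_c-\gamma}{d}}\|h_k\|_{\dot H^{s_c}(\R^d)},$$
and then to prove this estimate by interpolating between the standard Strichartz inequality and the elementary dispersive bound, with the gain $\epsilon^{(s_c-\gamma)/d}$ produced entirely by the thinness of the frequency belt on which $\widehat{h_k}$ is supported.

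First I would prove the single-bubble estimate. Fix $k$, write $N=2^k$, and set $\tilde h_k=N^{-\gamma}|\nabla|^{\gamma}h_k$; then $|\nabla|^{\gamma}e^{it\Delta}h_k=N^{\gamma}e^{it\Delta}\tilde h_k$, one has $\|\tilde h_k\|_{L^2}\sim\|h_k\|_{L^2}$ and $\|h_k\|_{\dot H^{s_c}}\sim N^{s_c}\|h_k\|_{L^2}$, and $\widehat{\tilde h_k}$ is supported in the belt $\{N\le|\xi|\le(1+\epsilon)N\}$, whose Lebesgue measure is $O(N^{d}\epsilon)$ for $0<\epsilon\le1$. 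Lemma~\ref{lem:strichartz} applied with the admissible pair $(\tfrac{2(d+2)}{d},\tfrac{2(d+2)}{d})$ gives
$$\big\|e^{it\Delta}\tilde h_k\big\|_{L^{\frac{2(d+2)}{d}}_{tx}}\lesssim\|\tilde h_k\|_{L^2}\sim\|h_k\|_{L^2},$$
while Fourier inversion together with the Cauchy--Schwarz inequality gives
$$\big\|e^{it\Delta}\tilde h_k\big\|_{L^\infty_{tx}}\le\big\|\widehat{\tilde h_k}\big\|_{L^1}\lesssim\big(N^{d}\epsilon\big)^{1/2}\|\tilde h_k\|_{L^2}\sim N^{d/2}\epsilon^{1/2}\|h_k\|_{L^2}.$$
Put $\theta=\tfrac{2(s_c-\gamma)}{d}$, which belongs to $(0,1)$ because $0<s_c-\gamma<d/2$ (recall $s_c<d/2$); one checks $\tfrac1{q_\gamma}=\tfrac{(1-\theta)d}{2(d+2)}$, so the interpolation inequality for Lebesgue norms in the $(t,x)$ variables yields
$$\big\|e^{it\Delta}\tilde h_k\big\|_{L^{q_\gamma}_{tx}}\le\big\|e^{it\Delta}\tilde h_k\big\|_{L^{\frac{2(d+2)}{d}}_{tx}}^{1-\theta}\big\|e^{it\Delta}\tilde h_k\big\|_{L^\infty_{tx}}^{\theta}\lesssim N^{s_c-\gamma}\epsilon^{\frac{s_c-\gamma}{d}}\|h_k\|_{L^2}.$$
Multiplying by $N^{\gamma}$ and using $\|h_k\|_{\dot H^{s_c}}\sim N^{s_c}\|h_k\|_{L^2}$ gives the single-bubble estimate.

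Next I would sum in $k$. Since $0<\epsilon\le1$, the belts $\{2^k\le|\xi|\le(1+\epsilon)2^k\}$ are pairwise disjoint up to sets of measure zero, so by Plancherel $\|h\|_{\dot H^{s_c}}^2\sim\sum_{k}\|h_k\|_{\dot H^{s_c}}^2$. Moreover, for each fixed $t$ the $x$-Fourier supports of the functions $F_k:=|\nabla|^{\gamma}e^{it\Delta}h_k$ lie in the annuli $\{|\xi|\sim 2^k\}$, which are finitely overlapping, and $2<q_\gamma<\infty$ (again because $0<s_c-\gamma<d/2$). Hence the Littlewood--Paley square function inequality applied in $x$ for fixed $t$, followed by Minkowski's inequality (legitimate since $q_\gamma\ge2$), gives
$$\Big\|\sum_{k}F_k\Big\|_{L^{q_\gamma}_{tx}}\lesssim\Big\|\Big(\sum_{k}|F_k|^2\Big)^{1/2}\Big\|_{L^{q_\gamma}_{tx}}\le\Big(\sum_{k}\|F_k\|_{L^{q_\gamma}_{tx}}^2\Big)^{1/2}.$$
Inserting the single-bubble estimate and the frequency orthogonality completes the proof:
$$\big\||\nabla|^{\gamma}e^{it\Delta}h\big\|_{L^{q_\gamma}_{tx}}\lesssim\epsilon^{\frac{s_c-\gamma}{d}}\Big(\sum_{k}\|h_k\|_{\dot H^{s_c}}^2\Big)^{1/2}\sim\epsilon^{\frac{s_c-\gamma}{d}}\|h\|_{\dot H^{s_c}}.$$

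The step I expect to be the conceptual heart of the matter is the choice of the two endpoints for the interpolation: both the left-hand norm and the right-hand $\dot H^{s_c}$-norm are scaling invariant, so no $\epsilon$-gain can appear unless one exploits that $\widehat{h_k}$ lives in a belt of width $\epsilon$; this is used exactly once, in the bound $\|e^{it\Delta}\tilde h_k\|_{L^\infty_{tx}}\lesssim(N^{d}\epsilon)^{1/2}\|h_k\|_{L^2}$, and it survives the interpolation precisely because the target space-time exponent $q_\gamma$ is strictly larger than the admissible exponent $\tfrac{2(d+2)}{d}$ whenever $\gamma<s_c$. The remaining ingredients --- the square function reduction, the homogeneity juggling with $|\nabla|^{\gamma}$, and the almost-orthogonality in $k$ --- are routine.
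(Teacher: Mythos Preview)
Your proof is correct. The square-function reduction to single bubbles and the subsequent $\ell^2$-summation are exactly what the paper does. Where you differ from the paper is in the proof of the single-bubble estimate (the paper's Lemma~\ref{lem:small-spacetime}).

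The paper writes $|\nabla|^\gamma e^{it\Delta}h_k=\mathcal F^{-1}(\chi_{\Omega_k})*\big(|\nabla|^\gamma e^{it\Delta}h_k\big)$ and applies Young's inequality in the spatial variable to land on an $L^2$-admissible Strichartz pair $(q_\gamma,r)$ with $\frac1r=\frac12-\frac{2}{dq_\gamma}$; the $\epsilon$-gain then comes from interpolating $\|\mathcal F^{-1}(\chi_{\Omega_k})\|_{L^{\tilde r}}$ between the scale-invariant $L^1$-bound and the Plancherel $L^2$-bound $\epsilon^{1/2}2^{dk/2}$. You instead interpolate the space-time norm itself between the diagonal Strichartz endpoint $L^{2(d+2)/d}_{tx}$ and $L^\infty_{tx}$, extracting the $\epsilon$-gain from the bound $\|e^{it\Delta}\tilde h_k\|_{L^\infty_{tx}}\le\|\widehat{\tilde h_k}\|_{L^1}\lesssim(\epsilon N^d)^{1/2}\|h_k\|_{L^2}$ via Cauchy--Schwarz on the thin annulus. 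Both routes encode the same mechanism --- the thin frequency support yields a factor $\epsilon^{1/2}$ when passing from $L^2$ to $L^1$ on the Fourier side, and this survives with exponent $\theta/2=(s_c-\gamma)/d$ after interpolation --- but your argument is slightly more direct, needing neither the kernel computation nor any Strichartz pair other than the symmetric one. The paper's approach, on the other hand, separates the roles of the belt localization and the dispersion more cleanly, and would adapt more readily to non-diagonal target norms.
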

The proof of this proposition is followed by the lemma stated as follows, in which we obtain the smallness of the space-time estimates.
\begin{lem}\label{lem:small-spacetime}
Let $h_k$ be  a component of $h$ in Assumption \ref{ass-f-2}. Under the same assumptions as in Proposition \ref{prop:bubbles}. Then
\begin{align}
\big\||\nabla|^{\gamma}e^{it\Delta}h_k\big\|_{L^{q_\gamma}_{tx}(\R\times \R^d)}
\lesssim \epsilon^{\frac{s_c-\gamma}{d}}\|h_k\|_{\dot H^{s_c}(\R^d)}.\label{9.08}
\end{align}
\end{lem}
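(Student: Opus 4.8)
The plan is to obtain this \emph{scale-invariant} bound by interpolating between a scale-invariant endpoint (the diagonal Strichartz estimate) and a scale-\emph{non}-invariant endpoint (an $L^\infty_{tx}$ estimate), with all of the smallness $\epsilon^{\frac{s_c-\gamma}{d}}$ produced by the narrow-belt support of $\widehat{h_k}$. First, using the scaling \eqref{eqs:scaling-p} — under which $\|h_k\|_{\dot H^{s_c}(\R^d)}$ is invariant and, because $\tfrac{d+2}{q_\gamma}-\gamma=\tfrac d2-s_c$, so is $\big\||\nabla|^{\gamma}e^{it\Delta}h_k\big\|_{L^{q_\gamma}_{tx}}$ — we reduce to the case $k=0$, i.e.\ $\supp\widehat{h_k}\subset\{\xi:1\le|\xi|\le1+\epsilon\}$. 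Since $|\nabla|^{\gamma}$ commutes with $e^{it\Delta}$ and $|\xi|^{\gamma}\sim1$, $|\xi|^{s_c}\sim1$ on this shell (so $\big\||\nabla|^{\gamma}h_0\big\|_{L^2(\R^d)}\sim\|h_0\|_{\dot H^{s_c}(\R^d)}$), it then suffices to prove
\[
\big\|e^{it\Delta}G\big\|_{L^{q_\gamma}_{tx}(\R\times\R^d)}\lesssim\epsilon^{\frac{s_c-\gamma}{d}}\|G\|_{L^2(\R^d)}
\]
for every $G\in L^2(\R^d)$ with $\supp\widehat G\subset\{\xi:1\le|\xi|\le1+\epsilon\}$, and then to apply this with $G=|\nabla|^{\gamma}h_0$.

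For the endpoints: on the one hand, Lemma~\ref{lem:strichartz} with the diagonal admissible pair $\big(\tfrac{2(d+2)}{d},\tfrac{2(d+2)}{d}\big)$ (admissible for every $d\ge1$) gives $\big\|e^{it\Delta}G\big\|_{L^{2(d+2)/d}_{tx}}\lesssim\|G\|_{L^2}$, carrying \emph{no} gain. On the other hand, since the shell $\{1\le|\xi|\le1+\epsilon\}$ has Lebesgue measure $\sim_d\epsilon$, Cauchy--Schwarz and Plancherel yield
\[
\big\|e^{it\Delta}G\big\|_{L^\infty_{tx}}\le\|\widehat G\|_{L^1}\le\big|\{1\le|\xi|\le1+\epsilon\}\big|^{1/2}\|\widehat G\|_{L^2}\lesssim\epsilon^{1/2}\|G\|_{L^2},
\]
and it is here that the smallness is captured. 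Crucially, this $L^\infty_{tx}$ bound does \emph{not} sit on the scaling line, which is exactly what allows interpolation to improve on Strichartz.

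Finally I would interpolate. Writing $\tfrac1{q_\gamma}=(1-\theta)\tfrac{d}{2(d+2)}$ one finds $\theta=\tfrac{2(s_c-\gamma)}{d}$, and since $0\le\gamma<s_c$ and $s_c=\tfrac d2-\tfrac2p<\tfrac d2$ we have $\theta\in(0,1)$ and $q_\gamma\in\big(\tfrac{2(d+2)}{d},\infty\big)$; thus H\"older's inequality in $(t,x)$ (log-convexity of $L^p$ norms) gives
\[
\big\|e^{it\Delta}G\big\|_{L^{q_\gamma}_{tx}}\le\big\|e^{it\Delta}G\big\|_{L^{2(d+2)/d}_{tx}}^{\,1-\theta}\big\|e^{it\Delta}G\big\|_{L^\infty_{tx}}^{\,\theta}\lesssim\big(\epsilon^{1/2}\big)^{\theta}\|G\|_{L^2}=\epsilon^{\frac{s_c-\gamma}{d}}\|G\|_{L^2},
\]
which, after undoing the rescaling and taking $G=|\nabla|^{\gamma}h_0$, is \eqref{9.08}. (Equivalently, without rescaling one bounds $\big\||\nabla|^{\gamma}e^{it\Delta}h_k\big\|_{L^{2(d+2)/d}_{tx}}\lesssim 2^{k(\gamma-s_c)}\|h_k\|_{\dot H^{s_c}}$ by Strichartz and $\big\||\nabla|^{\gamma}e^{it\Delta}h_k\big\|_{L^\infty_{tx}}\lesssim\epsilon^{1/2}2^{k(\gamma-s_c+d/2)}\|h_k\|_{\dot H^{s_c}}$ by the shell estimate; the interpolated power of $2^k$ is $(\gamma-s_c)(1-\theta)+(\gamma-s_c+\tfrac d2)\theta=(\gamma-s_c)+\tfrac d2\theta=0$, so the dyadic scale drops out.) I do not expect a serious technical obstacle here; the only genuine point — which the authors flag as ``nontrivial'' — is conceptual: both sides of \eqref{9.08} are scale-invariant, so the gain cannot come from any single dispersive or Strichartz estimate and must instead be extracted by interpolating against the off-scaling $L^\infty_{tx}$ endpoint, whose smallness is precisely the narrow-belt hypothesis $\big|\{2^k\le|\xi|\le(1+\epsilon)2^k\}\big|\sim\epsilon\,2^{kd}$.
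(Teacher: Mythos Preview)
Your argument is correct and is genuinely different from the paper's. The paper does \emph{not} interpolate between spacetime norms of the linear flow; instead it writes $|\nabla|^{\gamma}e^{it\Delta}h_k=\mathcal F^{-1}(\chi_{\Omega_k})*\big(|\nabla|^{\gamma}e^{it\Delta}h_k\big)$ and applies Young's inequality in $x$ to land in the (non-diagonal) admissible Strichartz space $L^{q_\gamma}_tL^r_x$ with $\tfrac1r=\tfrac12-\tfrac{2}{dq_\gamma}$; the $\epsilon^{(s_c-\gamma)/d}$ then comes from $\|\mathcal F^{-1}(\chi_{\Omega_k})\|_{L^{\tilde r}}$ via $L^1$--$L^2$ interpolation of the multiplier kernel, while the Strichartz factor supplies the correct power of $2^k$. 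By contrast, you extract the same $\epsilon$-gain directly from $\|\widehat G\|_{L^1}\lesssim\epsilon^{1/2}\|G\|_{L^2}$ on the thin shell, use only the diagonal Strichartz pair, and then interpolate the \emph{function} $e^{it\Delta}G$ in $L^p_{tx}$. Your route is a touch more elementary (no Young, no off-diagonal pair), and the scaling reduction to $k=0$ makes the bookkeeping cleaner; the paper's route is marginally more flexible if one wanted mixed-norm variants of \eqref{9.08}, since the Young step decouples the shell gain from the choice of Strichartz exponent. Either way, the conceptual point you highlight---that the gain must come from an off-scaling endpoint because both sides of \eqref{9.08} are scale-invariant---is exactly the content of the paper's remark following the lemma.
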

\begin{remark}
The weak result when $\epsilon=1$ is known from the Strichartz estimate. When $\epsilon$ is small, then the smallness for the space-time estimates is due to the weak topology of the space-time norm compared to the Sobolev norm, and the restriction on frequency.
\end{remark}
\begin{proof}[Proof of Lemma \ref{lem:small-spacetime}]
By the assumption of $h_{k}$ in  Assumption \ref{ass-f-2}, we have
$$
h_k=P_{\Omega_k}h_k,
$$
where  $P_{\Omega_k}$ is the projection satisfying
$$
\widehat{P_{\Omega_k}g}(\xi)=\chi_{\Omega_k}(\xi)\hat g(\xi)
$$
with
$$
\chi_{\Omega_k}(\xi)=\chi_{\le 1}\Big(\frac{\xi}{(1+\epsilon)2^k}\Big)-\chi_{\le 1}\Big(\frac{\xi}{2^k}\Big).
$$
Note that
\begin{align*}
\big\||\nabla|^{\gamma}e^{it\Delta}h_k\big\|_{L^{q_\gamma}_{tx}(\R\times \R^d)}
=\big\|P_{\Omega_k}e^{it\Delta}P_{\Omega_k}h_k\big\|_{L^{q_\gamma}_{tx}(\R\times \R^d)}.
\end{align*}
and
$$
P_{\Omega_k}g=\mathcal F^{-1}\big(\chi_{\Omega_k}\big)*g.
$$
Hence,  it follows from Young's inequality that
\begin{align}
\big\||\nabla|^{\gamma}e^{it\Delta}&P_{\Omega_k}h_k\big\|_{L^{q_\gamma}_{tx}(\R\times \R^d)}\notag\\
&=\big\|\mathcal F^{-1}\big(\chi_{\Omega_k}\big)*e^{it\Delta}|\nabla|^{\gamma}h_k\big\|_{L^{q_\gamma}_{tx}(\R\times \R^d)}\notag\\
&\qquad\lesssim
\big\|\mathcal F^{-1}\big(\chi_{\Omega_k}\big)\big\|_{L^{\tilde r}_x(\R^d)}\big\|e^{it\Delta}P_{\Omega_k}|\nabla|^{\gamma}h_k\big\|_{L^{q_\gamma}_{t}L^{r}_{x}(\R\times \R^d)},\label{0.49}
\end{align}
where $r, \tilde r$ are the parameters satisfying
$$
\frac1r=\frac12-\frac{2}{dq_\gamma},\quad \frac1{\tilde r}=1-\frac{s_c-\gamma}{d}.
$$
Note that $1< \tilde r< 2$ and
\begin{align*}
\mathcal F^{-1}\big(\chi_{\Omega_k}\big)
=\big((1+\epsilon)2^k\big)^d\mathcal F^{-1}\big(\chi_{\le 1}\big)\Big((1+\epsilon)2^kx\Big)-\big(2^{k}\big)^d\mathcal F^{-1}\big(\chi_{\le 1}\big)\Big(2^kx\Big).
\end{align*}
Then we have
\begin{align*}
\big\|\mathcal F^{-1}\big(\chi_{\Omega_k}\big)\big\|_{L^1(\R^d)}
&\le \Big\|\big((1+\epsilon)2^k\big)^d\mathcal F^{-1}\big(\chi_{\le 1}\big)\Big((1+\epsilon)2^kx\Big)\Big\|_{L^1(\R^d)}\\
&\qquad+\Big\|\big(2^k\big)^d\mathcal F^{-1}\big(\chi_{\le 1}\big)\Big(2^kx\Big)\Big\|_{L^1(\R^d)}\\
&\lesssim \big\|\mathcal F^{-1}\big(\chi_{\le 1}\big)\big\|_{L^1(\R^d)}
\lesssim 1;
\end{align*}
and
\begin{align*}
\big\|\mathcal F^{-1}\big(\chi_{\Omega_k}\big)\big\|_{L^2(\R^d)}
&=\big\|\chi_{\Omega_k}\big\|_{L^2(\R^d)}\\
&
\lesssim  \big\|\chi_{2^k\le\cdot\le (1+\epsilon)2^k}\big\|_{L^2(\R^d)}\lesssim \epsilon^\frac12 2^{\frac {dk}{2} }.
\end{align*}
which imply that
\begin{align}
\big\|&\mathcal F^{-1}\big(\chi_{\Omega_k}\big)\big\|_{L^{\tilde r}(\R^d)}
\lesssim \big\|\mathcal F^{-1}\big(\chi_{\Omega_k}\big)\big\|_{L^1(\R^d)}^{\frac2{\tilde r}-1}\big\|\mathcal F^{-1}\big(\chi_{\Omega_k}\big)\big\|_{L^2(\R^d)}^{2-\frac2{\tilde r}}
\lesssim \epsilon^{\frac{s_c-\gamma}{d}} 2^{(s_c-\gamma)k}. \label{ex-1}
\end{align}
On the other hand, by Lemma \ref{lem:radial-Str}, we have
\begin{align}
\big\||\nabla|^{\gamma}e^{it\Delta}P_{\Omega_k}h_k\big\|_{L^{q_\gamma}_{t}L^{r}_{x}(\R\times \R^d)}
\lesssim \||\nabla|^{\gamma}h_k\|_{L^2(\R^d)}\lesssim 2^{-(s_c-\gamma)}\|h_k\|_{\dot H^{s_c}(\R^d)}.\label{ex-2}
\end{align}
Then it follows from \eqref{0.49}-\eqref{ex-2} that
$$
\big\||\nabla|^{\gamma}e^{it\Delta}P_{\Omega_k}h_k\big\|_{L^{q_\gamma}_{tx}(\R\times \R^d)}
\lesssim \epsilon^{\frac{s_c-\gamma}{d}}\|h_k\|_{\dot H^{s_c}(\R^d)}.
$$
Hence we finish the proof.
\end{proof}

Now we turn to prove Proposition \ref{prop:bubbles}.
\begin{proof}[Proof of Proposition \ref{prop:bubbles}]
Using the Littlewood-Paley characterization of Lebesgue space, we have
\begin{align*}
\big\||\nabla|^{\gamma}e^{it\Delta}h\big\|_{L^{q_\gamma}_{tx}(\R\times \R^d)}
\lesssim
\Big\|\Big(\sum\limits_{k=0}^\infty\big||\nabla|^{\gamma}e^{it\Delta}h_k\big|^2\Big)^\frac12\Big\|_{L^{q_\gamma}_{tx}(\R\times \R^d)}\\
\lesssim
\Big(\sum\limits_{k=0}^\infty\big\||\nabla|^{\gamma}e^{it\Delta}h_k\big\|_{L^{q_\gamma}_{tx}(\R\times \R^d)}^2\Big)^\frac12.
\end{align*}
Applying Lemma \ref{lem:small-spacetime}, we  further bound it by
\begin{align*}
\epsilon^{\frac{s_c-\gamma}{d}}\Big(\sum\limits_{k=0}^\infty\|h_k\|_{\dot H^{s_c}(\R^d)}^2\Big)^\frac12
=\epsilon^{\frac{s_c-\gamma}{d}}\|h\|_{\dot H^{s_c}(\R^d)},
\end{align*}
which finishes the proof.
\end{proof}

\section{Proof of the main theorems}
\label{sec:proof_of_THM2}

In this section, we prove Theorems \ref{thm:main2} and \ref{thm:main3}.

\subsection{The proof of Theorem \ref{thm:main2}} \label{sec:Proof-THM1}
In this subsection, we mainly focus on the proof of Theorem \ref{thm:main2}, which is based on the estimates obtained in Section 3 and 4 and continuity argument.  Before proving the theorem, we first introduce the modified incoming/outgoing components of radial function, the appropriate working spaces, the homogeneous and inhomogeneous Strichartz estimates.

\subsubsection{Definitions of the modified incoming and outgoing components}
First of all, we define the modified incoming and outgoing components, say $f_+$ and $f_-$, of the function $f$.
For this purpose, we split the function $f$ as follows,
$$
f=\big(1-\chi_{\ge 1}\big)f+\big(1-P_{\ge 1}\big)\chi_{\ge 1}f+P_{\ge 1}\chi_{\ge 1}f.
$$
\begin{definition}\label{def:+-}
Let the radial function $f\in \mathcal S  (\R^d)$.
We define the modified outgoing component of $f$ as
$$
f_+=\frac12\big(1-\chi_{\ge 1}\big)f+\frac12\big(1-P_{\ge 1}\big)\chi_{\ge 1}f+\big(P_{\ge 1}\chi_{\ge 1}f\big)_{out};
$$
the modified incoming component of $f$ as
$$
f_-=\frac12\big(1-\chi_{\ge 1}\big)f+\frac12\big(1-P_{\ge 1}\big)\chi_{\ge 1}f+\big(P_{\ge 1}\chi_{\ge 1}f\big)_{in}.
$$
\end{definition}
We note that such definitions can be extended to more general functions. It follows from the definitions that
$$
f=f_++f_-.
$$
Hence, if $f\in H^{s}(\R^d)$, then at least one of $f_+$ and $f_-$ belongs to $H^{s}(\R^d)$.

\subsubsection{Definitions of working space}\label{5.12}
For simplicity, we introduce the following working spaces. We denote $X(I)$ for $I\subset \R^+$ to be the space under the norms
 \begin{align*}
\|h\|_{X(I)}:=&\|h\|_{L^\infty_t \dot H^{s_c}_x(I\times \R^d)}+\|h\|_{L^{2p}_tL^{dp}_x(I\times\R^{d})}+\|h\|_{L^{\frac{(d+2)p}{2}}_{tx}(I\times \R^d)}+\big\||\nabla|^{s_c}h\big\|_{L^{\frac{2(d+2)}{d}}_{tx}(I\times \R^d)}.
\end{align*}
Moreover, we denote $X_0(I)$ for $I\subset \R^+$ to be the space under the norms
\begin{align*}
\|h\|_{X_0(I)}:=&\|h\|_{L^{\infty-}_tL^{2+}_x(I\times \R^d)}+\|h\|_{L^{\frac{(d+2)p}{2}}_{t}L^{\frac{2d(d+2)p}{d(d+2)p-8}}_x(I\times \R^d)}+\big\|h\big\|_{L^{\frac{2(d+2)}{d}}_{tx}(I\times \R^d)}.
 \end{align*}
 Then we denote $Y(I)$ to be the space under the norms
 $$
 \|h\|_{Y(I)}:=\|h\|_{X(I)}+\|h\|_{X_0(I)}.
 $$

Moreover, we need the spaces, $Z_f(I)$ and $Z_g(I)$ which are tailor-made for $f_{out}$ and $g$ respectively. Before defining the spaces, let
$\epsilon$  be a fixed small positive constant and $\sigma_0,\gamma$  be the constants satisfying
$$
\frac{1}{\sigma_0}=\frac{2d+1}{4d-2}-\frac2d, \quad \mbox{and }\quad \gamma=\frac{d-1}{2d-1}-\epsilon.
$$
Also, we denote $\infty-=\frac1\epsilon$. Now we define $Z_f(I)$ to be the space under the norm
\begin{align*}
\big\|h\big\|_{Z_f(I)}:=&
\big\|h\big\|_{L^{2p}_tL^{dp}_x(I\times\R^{d})}+\sup\limits_{q\in[2,\frac{dp}{2}-\epsilon]}\big\|h\big\|_{L^{\infty}_tL^{q}_x(I\times\R^{d})}\\
&+\big\|h\big\|_{L^{\infty-}_tL^{\frac{dp}{2}}_x(I\times\R^{d})}+\big\|h\big\|_{\big(L^\infty_t H^{s_c}_x+L^2_t\dot W^{s_c-\gamma,\sigma_0}_x\big)(I\times\R^{d})};
\end{align*}
and we define $Z_g(I)$ to be the space under the norm
\begin{align*}
\big\|h&\big\|_{Z_g(I)}:=
\big\|h\big\|_{L^{\frac{2(d+2)}{d}}_{tx}(I\times \R^d)}+\big\|h\big\|_{L^{\frac{(d+2)p}{2}}_{tx}(I\times \R^d)}+\big\|h\big\|_{L^{2p}_tL^{dp}_x(I\times\R^{d})}\\
&+\sup\limits_{q\in[2+\epsilon,\infty]}\big\|h\big\|_{L^{\infty-}_tL^{q}_x(I\times\R^{d})}+\sup\limits_{q\in[\frac{2d}{d-2},\infty]}\big\||\nabla|^{s_c-\gamma}h\big\|_{L^2_tL^{q}_x(I\times\R^{d})}.
\end{align*}

\subsubsection{The linear estimates}

For convenience, we rewrite the initial data $u_0$ as
$$
u_0=\psi_0+\big(P_{\ge 1}\chi_{\ge 1}f\big)_{out}+g,
$$
where
$$
\psi_0=\frac12\chi_{\le 1}f+\frac12 P_{\le 1}\chi_{\ge 1}f.
$$
By \eqref{f-cond}, we claim that $\psi_0\in H^{s_c}(\R^d)$ with
\begin{align}
\big\|\psi_0\big\|_{H^{s_c}(\R^d)}\lesssim \delta_0. \label{21.23}
\end{align}
Indeed,
$$
\big\|\chi_{\le 1}f\big\|_{L^2(\R^d)}\lesssim \big\|\chi_{\le 1}f\big\|_{L^{p_c}(\R^d)}
\lesssim
\big\|\chi_{\le 1}f\big\|_{\dot H^{s_c}(\R^d)}.
$$
Hence, $\chi_{\le 1}f\in H^{s_c}(\R^d)$ and
$$
\big\|\chi_{\le 1}f\big\|_{H^{s_c}(\R^d)}\lesssim \delta_0.
$$
Moreover, since the index $s_1$ in \eqref{f-cond} is smaller than $s_c$, one has
$$
\big\|P_{\le 1}\chi_{\ge 1}f\big\|_{H^{s_c}(\R^d)}\lesssim \big\|\chi_{\ge 1}f\big\|_{H^{\varepsilon_1}(\R^d)}\lesssim \delta_0.
$$
Hence we obtain the claim \eqref{21.23}.

We still need some linear estimates related to $\big(P_{\ge 1}\chi_{\ge 1}f\big)_{out}$ and $g$. Denote by   $f_{out,L}$ the linear flow of
 \begin{equation*}
   \left\{ \aligned
    &i\partial_{t}\phi+\Delta \phi=0,
    \\
    &\phi(0,x)  =\big(P_{\ge 1}\chi_{\ge 1}f\big)_{out}(x).
   \endaligned
  \right.
 \end{equation*}
 that is, $f_{out,L}(t)=e^{it\Delta}\big(P_{\ge 1}\chi_{\ge 1}f\big)_{out}$. Similarly, we denote $g_{L}(t)=e^{it\Delta}g.$ Next we shall introduce the estimates of $f_{out,L}(t)$ and $g_{L}$ in spaces $Z_f(I)$ and $Z_g(I)$, respectively. Such estimates can be obtained by using the results in Subsections \ref{sec:LE-Out-in} and \ref{sec:LE-CS}.

\begin{lem}\label{lem:f-out-L}
Suppose that $f$ is a radial function verifying \eqref{f-cond}, then
$$
\big\|f_{out,L}\big\|_{Z_f(\R^+)}\lesssim \delta_0.
$$
\end{lem}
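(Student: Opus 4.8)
The plan is to unpack the definition of the norm $\|\cdot\|_{Z_f(\R^+)}$ term by term and match each piece against the space-time estimates for the incoming/outgoing linear flow established in Section~\ref{sec:LE-Out-in}. Recall that $f_{out,L}(t)=e^{it\Delta}\big(P_{\ge 1}\chi_{\ge 1}f\big)_{out}$ and that, by the decomposition \eqref{16.49}--\eqref{16.50} (with $k_0=0$), we may write $f_{out,L}=\big(P_{\ge 1}(\chi_{\ge 1}f)\big)_{out,L}^{I}+\big(P_{\ge 1}(\chi_{\ge 1}f)\big)_{out,L}^{II}$. The point is that Lemma~\ref{lem:Part-I} controls the first summand and Lemma~\ref{lem:Part-II} controls the second, in all the norms appearing in $Z_f$.

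First I would handle the high-regularity contributions. For $\big\|h\big\|_{L^{2p}_tL^{dp}_x}$: note $(2p,dp)$ satisfies $\frac2{2p}+\frac{d}{dp}=\frac1p+\frac1p$, which is an admissible Strichartz pair only at regularity $s_c$ — more precisely, writing it through $|\nabla|^{s_c}$ and Sobolev embedding, or applying Lemma~\ref{lem:Part-I} with $s$ chosen so that $|\nabla|^{s}$ maps into $L^{2p}_tL^{dp}_x$ via a genuine admissible pair, reduces it to $\|\chi_{\ge 1}f\|_{H^{-1}}\lesssim\delta_0$; for the $II$-part, $(2p,dp)$ is exactly the pair listed in \eqref{10.13}, so \eqref{11.17} gives a bound by $\|\chi_{\ge 1}f\|_{H^{s_c/d-}}\lesssim\|\chi_{\ge1}f\|_{H^{s_1}}\lesssim\delta_0$ (using $s_1\ge \frac{s_c}{d}-\varepsilon_0$). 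The terms $\sup_{q\in[2,\frac{dp}{2}-\epsilon]}\|h\|_{L^\infty_tL^q_x}$ and $\|h\|_{L^{\infty-}_tL^{dp/2}_x}$ are treated the same way: the $I$-part by Lemma~\ref{lem:Part-I} combined with Sobolev embedding in $x$ (the endpoints $(\infty,2)$, $(\infty,\frac{dp}{2}-)$ are admissible after possibly putting a derivative $s\in[0,\mu(d)]$), and the $II$-part directly from \eqref{11.17} since $(\infty,2)$, $(\infty,\frac{dp}{2}-)$, $(\infty-,\frac{dp}{2})$ are all in the list \eqref{10.13}. Finally, for the mixed-space piece $\big\|h\big\|_{(L^\infty_tH^{s_c}_x+L^2_t\dot W^{s_c-\gamma,\sigma_0}_x)(I)}$, I would split $f_{out,L}=f_{out,L}^I+f_{out,L}^{II}$ once more and place $f_{out,L}^I$ in $L^\infty_tH^{s_c}_x$ using Lemma~\ref{lem:Part-I} with $s=s_c\le\mu(d)$ (valid in $d=3,4,5$ for the relevant range of $p$, using $\big\|\chi_{\le1}f\big\|_{\dot H^{s_c}}+\|\chi_{\ge1}f\|_{H^{s_1}}\le\delta_0$ to handle both the $\dot H^{s_c}$ and $L^2$ parts of the $H^{s_c}$ norm), while $f_{out,L}^{II}$ goes into $L^2_t\dot W^{s_c-\gamma,\sigma_0}_x$ via \eqref{11.21} with $\beta=s_c-\gamma$: this requires $\beta-\frac1{\sigma_0}+\varepsilon\le s_1$, which is exactly the reason the index $s_1$ was chosen as $\max\{\frac{s_c}{d}-\varepsilon_0,\ s_c-\frac{4d-1}{4d-2}+\frac2d+\varepsilon_0\}$ — one checks $\frac1{\sigma_0}=\frac{2d+1}{4d-2}-\frac2d$ and $\gamma=\frac{d-1}{2d-1}-\epsilon$ so that $s_c-\gamma-\frac1{\sigma_0}=s_c-\frac{4d-1}{4d-2}+\frac2d+O(\epsilon)$.

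The main obstacle I anticipate is \emph{bookkeeping the indices}: one must verify that every Lebesgue exponent in $Z_f$ is reached by an admissible (radial) Strichartz pair at a regularity level that $\|\chi_{\ge1}f\|_{H^{s_1}}$ actually controls, and in particular that the decomposition into the $I$- and $II$-parts distributes the regularity cost correctly — the $I$-part costs essentially nothing ($H^{-1}$) but only produces standard Strichartz norms, whereas the $II$-part produces the critical and $\dot W^{s_c-\gamma,\sigma_0}$ norms but costs up to $s_c$ derivatives, which is precisely at the threshold $s_1$. A secondary technical point is the low-frequency/compact-support piece hidden inside $\big(P_{\ge 1}\chi_{\ge1}f\big)_{out}$: although $f$ is only cut off to $\{|x|\ge1\}$ and frequency-localized to $\{|\xi|\ge1\}$, Lemma~\ref{lem:supportf+} guarantees the part of $f_{out}$ that leaks into $\{|x|\le\frac14\}$ is controlled in $H^{\mu(d)}$ by $\|f\|_{H^{-1}}$, so it contributes harmlessly to the $I$-part; I would invoke this at the start to justify writing $f=\chi_{\ge1}f$ throughout. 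Once the index arithmetic is laid out, each individual estimate is an immediate citation of Lemma~\ref{lem:Part-I} or Lemma~\ref{lem:Part-II}, and summing the finitely many terms of the $Z_f$ norm yields $\|f_{out,L}\|_{Z_f(\R^+)}\lesssim\delta_0$.
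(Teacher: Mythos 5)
Your proposal is correct and follows essentially the same route as the paper: decompose $f_{out,L}=f_{out,L}^I+f_{out,L}^{II}$ via \eqref{16.49}--\eqref{16.50}, control $f_{out,L}^I$ by Lemma~\ref{lem:Part-I} plus Sobolev embedding/interpolation (costing only $H^{-1}$), and control $f_{out,L}^{II}$ by Lemma~\ref{lem:Part-II}, using \eqref{11.17} for the pure Lebesgue norms and \eqref{11.21} with $\beta=s_c-\gamma$ for the $L^2_t\dot W^{s_c-\gamma,\sigma_0}_x$ piece, with the index arithmetic $s_c-\gamma-\frac1{\sigma_0}+ = s_c-\frac{4d-1}{4d-2}+\frac2d+\varepsilon_0$ reproducing the definition of $s_1$ exactly as you note. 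One small imprecision: $(\infty,\frac{dp}{2}-)$ is not itself an admissible Strichartz pair at $L^2$ regularity; the paper gets that range (and the $L^{2p}_tL^{dp}_x$ bound for the $I$-part) by first putting $|\nabla|^{s_c}$ on an admissible pair via Lemma~\ref{lem:Part-I} and then applying Sobolev embedding and interpolation, which is what you describe in the alternative clause, so the conclusion stands.
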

\begin{proof} Due to the decomposition in the beginning of Section \ref{sec:LE-Out-in}, we write
\begin{align*}
f_{out,L}=\Big(P_{\ge 1}\chi_{\ge 1}f\Big)_{out,L}^I+\Big(P_{\ge 1}\chi_{\ge 1}f\Big)_{out,L}^{II}.
\end{align*}

Let us first deal with the estimates for $\Big(P_{\ge 1}\chi_{\ge 1}f\Big)_{out,L}^I$. Notice that it follows from Lemma \ref{lem:Part-I} that
\begin{align*}
\Big\||\nabla|^{s_c}\Big(&P_{\ge 1}\chi_{\ge 1}f\Big)_{out,L}^I\Big\|_{L^{2p}_tL^{\frac{dp-2}{2dp}}_x(\R^+\times\R^{d})}\!\!+\Big\|\Big(P_{\ge 1}\chi_{\ge 1}f\Big)_{out,L}^I\Big\|_{L^\infty_t H^{s_c}_x(\R^+\times\R^{d})}
\lesssim \|P_{\ge 1}\chi_{\ge 1}f\|_{H^{-1}(\R^d)},
\end{align*}
which combined with interpolation implies
\begin{align}
\Big\|\Big(&P_{\ge 1}\chi_{\ge 1}f\Big)_{out,L}^I\Big\|_{L^{2p}_tL^{dp}_x(\R^+\times\R^{d})}+\sup\limits_{q\in[2,\frac{dp}{2}-\epsilon]}\Big\|\Big(P_{\ge 1}\chi_{\ge 1}f\Big)_{out,L}^I\Big\|_{L^{\infty}_tL^{q}_x(\R^+\times\R^{d})}\notag\\
&+\Big\|\Big(P_{\ge 1}\chi_{\ge 1}f\Big)_{out,L}^I\Big\|_{L^{\infty-}_tL^{\frac{dp}{2}}_x(\R^+\times\R^{d})}+\Big\|\Big(P_{\ge 1}\chi_{\ge 1}f\Big)_{out,L}^I\Big\|_{L^\infty_t H^{s_c}_x(\R^+\times\R^{d})}
\lesssim \|P_{\ge 1}\chi_{\ge 1}f\|_{H^{-1}(\R^d)}.\label{16.10}
\end{align}

Now we turn to the estimates for $\Big(P_{\ge 1}\chi_{\ge 1}f\Big)_{out,L}^{II}$. We need to prove that
\begin{align}
\Big\|\Big(&P_{\ge 1}\chi_{\ge 1}f\Big)_{out,L}^{II}\Big\|_{L^{2p}_tL^{dp}_x(\R^+\times\R^{d})}+\sup\limits_{q\in[2,\frac{dp}{2}-\epsilon]}\Big\|\Big(P_{\ge 1}\chi_{\ge 1}f\Big)_{out,L}^{II}\Big\|_{L^{\infty}_tL^{q}_x(\R^+\times\R^{d})}\notag\\
&+\Big\|\Big(P_{\ge 1}\chi_{\ge 1}f\Big)_{out,L}^{II}\Big\|_{L^{\infty-}_tL^{\frac{dp}{2}}_x(\R^+\times\R^{d})}+\Big\|\Big(P_{\ge 1}\chi_{\ge 1}f\Big)_{out,L}^{II}\Big\|_{L^2_t\dot W^{s_c-\gamma,\sigma_0}_x(\R^+\times\R^{d})}\notag\\
&\qquad\qquad\lesssim \|P_{\ge 1}\chi_{\ge 1}f\|_{H^{s_1}(\R^d)},\label{16.42}
\end{align}
where $s_{1}<s_{c}$ is the constant defined as in Theorem \ref{thm:main2}. By using Lemma \ref{lem:Part-II} and interpolation, we have
\begin{align*}
\Big\|\Big(P_{\ge 1}\chi_{\ge 1}f\Big)_{out,L}^{II}&\Big\|_{L^{2p}_tL^{dp}_x(\R^+\times\R^{d})}+\sup\limits_{q\in[2,\frac{dp}{2}-\epsilon]}\Big\|\Big(P_{\ge 1}\chi_{\ge 1}f\Big)_{out,L}^{II}\Big\|_{L^{\infty}_tL^{q}_x(\R^+\times\R^{d})}\notag\\
&+\Big\|\Big(P_{\ge 1}\chi_{\ge 1}f\Big)_{out,L}^{II}\Big\|_{L^{\infty-}_tL^{\frac{dp}{2}}_x(\R^+\times\R^{d})}
\lesssim \|P_{\ge 1}\chi_{\ge 1}f\|_{H^{\frac{s_c}{d}-}(\R^d)}
\end{align*}
and
$$
\Big\|\Big(P_{\ge 1}\chi_{\ge 1}f\Big)_{out,L}^{II}\Big\|_{L^2_t\dot W^{s_c-\gamma,\sigma_0}_x(\R^+\times\R^{d})}
\lesssim \|P_{\ge 1}\chi_{\ge 1}f\|_{H^{s_c-\frac{4d-1}{4d-2}+\frac2d+}(\R^d)},
$$
which give \eqref{16.42}. Hence, we finish the proof.
\end{proof}

\begin{lem}\label{lem:g-L}
Suppose that $g$ is a radial function verifying \eqref{g-cond}, then
$$
\big\|g_{L}\big\|_{Z_g(\R)}\lesssim \delta_0.
$$
\end{lem}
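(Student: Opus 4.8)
The plan is to estimate separately each of the five constituent norms that make up $\|\cdot\|_{Z_g(\R)}$; all of them have the form $\big\||\nabla|^{\beta}e^{it\Delta}g\big\|_{L^\varrho_tL^\sigma_x(\R\times\R^d)}$ with $\beta=0$ or $\beta=s_c-\gamma$, and in every case the goal is to bound the result by $\big\|\langle\xi\rangle^{s_2}\hat g\big\|_{L^1(\R^d)}\le\delta_0$ using the linear theory of Section \ref{sec:LE-CS}.

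First I would handle the four terms with $\beta=0$, namely $\|g_L\|_{L^{\frac{2(d+2)}{d}}_{tx}}$, $\|g_L\|_{L^{\frac{(d+2)p}{2}}_{tx}}$, $\|g_L\|_{L^{2p}_tL^{dp}_x}$ and $\sup_{q\in[2+\epsilon,\infty]}\|g_L\|_{L^{\infty-}_tL^{q}_x}$. For each one I check that the pair $(\varrho,\sigma)$ satisfies the two hypotheses of Proposition \ref{prop:lineares-thm2}, namely $\frac2\varrho+\frac d\sigma\le\frac d2$ and $\frac1\varrho+\frac{d-1}\sigma<\frac{d-1}2$ for $d=3,4,5$ — a direct computation (the first inequality being an equality or reducing to $p\ge\frac4d$, the second reducing to $d>2$), and for the family $L^{\infty-}_tL^{q}_x$ one fixes the meaning of ``$\infty-$'' to be a sufficiently large constant multiple of $\epsilon^{-1}$ so that admissibility survives uniformly over $q\in[2+\epsilon,\infty]$. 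Proposition \ref{prop:lineares-thm2} then bounds each term by $\big\|\langle\xi\rangle^{-\frac{d-2}{(d-1)\varrho}+}\hat g\big\|_{L^1}$, and it remains only to observe that the weight exponent $-\frac{d-2}{(d-1)\varrho}+$ is $\le s_2$; this follows from $s_2=\max\{-\varepsilon_0,\,s_c-\frac{d-2}{2(d-1)}-\frac{d-1}{2d-1}+\varepsilon_0\}$ once $\varepsilon_0$ and the auxiliary loss ``$+$'' are taken small relative to $\epsilon$, checking the two regimes in which each branch of the maximum is active.

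The remaining term, $\sup_{q\in[\frac{2d}{d-2},\infty]}\big\||\nabla|^{s_c-\gamma}g_L\big\|_{L^2_tL^{q}_x}$, is the delicate one, since $|\nabla|^{s_c-\gamma}g$ is no longer compactly supported. Here I would Littlewood--Paley decompose $g=\chi_{\le1}P_{\le1}g+\sum_{N\ge1}\chi_{\le1}P_Ng$ (valid because $g=\chi_{\le1}g$), dispose of the low-frequency piece by the argument of Lemma \ref{lem:low-fre} (with $|\nabla|^{s_c-\gamma}$ harmless on $P_{\le1}$), and for a dyadic piece $\chi_{\le1}P_Ng$ commute $|\nabla|^{s_c-\gamma}$ through the flow and past the cutoffs. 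Since $\chi_{\le1}P_Ng$ is concentrated at frequency $\sim N$ up to errors that decay rapidly in $N$ — the genuine cutoff mismatches being controlled by Lemma \ref{lem:mismatch}, exactly as in the proof of Corollary \ref{cor:g-1-high-long} — this produces a gain of $N^{s_c-\gamma}$ and reduces matters to Corollary \ref{cor:g-1-high-long} with $(\varrho,\sigma)=(2,q)$, which is admissible because $\frac{2d}{d-2}>\frac{2(d-1)}{d-2}$, so every $q\ge\frac{2d}{d-2}$ obeys $\frac12+\frac{d-1}q<\frac{d-1}2$. This gives $\big\||\nabla|^{s_c-\gamma}e^{it\Delta}(\chi_{\le1}P_Ng)\big\|_{L^2_tL^q_x}\lesssim N^{s_c-\gamma-\frac{d-2}{2(d-1)}}\|\chi_N\hat g\|_{L^1}$, and summing over $N\ge1$ the exponent equals $\big(s_c-\frac{d-2}{2(d-1)}-\frac{d-1}{2d-1}\big)+\epsilon\le s_2$ by the definition $\gamma=\frac{d-1}{2d-1}-\epsilon$, so $\sum_{N\ge1}N^{s_c-\gamma-\frac{d-2}{2(d-1)}}\|\chi_N\hat g\|_{L^1}\lesssim\big\|\langle\xi\rangle^{s_2}\hat g\big\|_{L^1}\le\delta_0$.

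The main obstacle is essentially bookkeeping rather than analysis — the hard estimates already live in Section \ref{sec:LE-CS}. One has to keep the small parameters ($\epsilon$, $\varepsilon_0$, and the exponents hidden in ``$\infty-$'' and ``$2+$'') ordered — in particular $\varepsilon_0\ll\epsilon$ — so that every admissibility condition and every weight-exponent inequality holds simultaneously; and one has to treat carefully the nonlocality of $|\nabla|^{s_c-\gamma}$, i.e. the fact that commuting a fractional derivative through the spatial cutoff $\chi_{\le1}$ and through the frequency projections, on frequency-localized data, costs only errors that are rapidly decaying in $N$, which is exactly where the mismatch estimate of Lemma \ref{lem:mismatch} enters.
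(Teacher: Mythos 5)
Your proposal is correct and follows the same route as the paper: the four no-derivative norms are dispatched by Proposition \ref{prop:lineares-thm2} after checking the two admissibility inequalities for each pair, and the derivative term is handled by the same dyadic machinery (Lemma \ref{lem:low-fre} and Corollary \ref{cor:g-1-high-long}) with a factor $N^{s_c-\gamma}$ per block. The paper simply asserts the bound $\big\||\nabla|^{s_c-\gamma}g_L\big\|_{L^2_tL^q_x}\lesssim\big\|\langle\xi\rangle^{s_c-\gamma-\frac{d-2}{2(d-1)}+}\hat g\big\|_{L^1}$ without spelling out the Littlewood--Paley/mismatch commutation that you carefully record, so your write-up is if anything slightly more detailed on that point.
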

\begin{proof}
Let $(\varrho,\sigma)$ be one  of the following pairs
$$
\big(\frac{2(d+2)}{d},\frac{2(d+2)}{d}\big),\quad \big(\frac{(d+2)p}{2},\frac{(d+2)p}{2}\big),\quad (2p,dp),\quad (\infty-,q)
$$
with $q\in (2,\infty]$. It is easy to see that every pair above satisfies
$\frac2\varrho+\frac d\sigma\le \frac d2$ and $\frac1\varrho+\frac{d-1}\sigma<\frac{d-1}{2}$. Then by  Proposition \ref{prop:lineares-thm2}, for the constant $\varepsilon_{0}$ in $s_{2}$ (see definition above Theorem \ref{thm:main2}), we have that
\begin{align*}
\big\|g_L&\big\|_{L^{\frac{2(d+2)}{d}}_{tx}(\R\times \R^d)}+\big\|g_L\big\|_{L^{\frac{(d+2)p}{2}}_{tx}(\R\times \R^d)}+\big\|g_L\big\|_{L^{2p}_tL^{dp}_x(\R\times\R^{d})}\\
&+\sup\limits_{q\in[2+\epsilon,\infty]}\big\|g_L\big\|_{L^{\infty-}_tL^{q}_x(\R\times\R^{d})}\lesssim \|\langle\xi\rangle^{-\varepsilon_0}\hat g\|_{L^1(\R^d)};
\end{align*}
and for any $q\ge \frac{2d}{d-2}$,
\begin{align*}
\big\||\nabla|^{s_c-\gamma}g_L\big\|_{L^2_tL^{q}_x(\R\times\R^{d})}\lesssim
\big\|\langle \xi\rangle^{s_c-\gamma-\frac{d-2}{2(d-1)}+}\hat g\big\|_{L^1(\R^d)}.
\end{align*}
Let the constant $\varepsilon_0$ can also be chosen such that
$$
s_c-\gamma-\frac{d-2}{2(d-1)}+=s_c-\frac{d-2}{2(d-1)}-\frac{d-1}{2d-1}+\varepsilon_0.
$$
Then by assumptions on $g$ (see \eqref{g-cond}), we finish the proof.
\end{proof}

\subsubsection{The proof of Theorem \ref{thm:main2}}

Denote by $\psi=u-f_{out,L}-g_L$, then $\psi$ obeys the following equation,
 \begin{equation*}
   \left\{ \aligned
    &i\partial_{t}\psi+\Delta \psi=|u|^pu,
    \\
    &\psi(0,x)  =\psi_0(x).
   \endaligned
  \right.
 \end{equation*}
It follows from the Duhamel formula that
\begin{align}
\psi(t)=e^{it\Delta}\psi_0+\int_0^{t}e^{i(t-s)\Delta}\big(|u|^pu\big)\,ds.\label{Duh-formu}
\end{align}
By using the standard Strichartz estimate given in Lemma \ref{lem:strichartz} and \eqref{21.23}, we have
\begin{align}
\big\|e^{it\Delta}\psi_0\big\|_{Y(\R)}\lesssim \big\|\psi_0\big\|_{H^{s_c}(\R^d)}\lesssim \delta_0. \label{est:psi-X}
\end{align}
It remains to consider the estimates for nonlinear part.  The following lemma deals with the estimate of the nonlinear term in $X_0(I)$.
\begin{lem}
Assume that  $d=3,4,5$,   $p\ge \frac 4d$, $0\in I$ and $\psi\in X(I)$, then
\begin{align*}
\Big\|\int_0^{t}e^{i(t-s)\Delta}\big(|u|^pu\big)\,ds\Big\|_{X_0(I)}\lesssim  \|\psi\|_{X_0(I)}\|\psi\|_{X(I)}^{p}+\delta_0\|\psi\|_{X(I)}^{p}+\delta_0^{p}\|\psi\|_{X_0(I)}+\delta_0^{p+1}.
\end{align*}
\end{lem}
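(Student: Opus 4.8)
The strategy is a routine Strichartz-plus-H\"older nonlinear estimate; the only real work is the bookkeeping of exponents imposed by the non-standard working spaces. Write $u=\psi+v_L$ with $v_L:=f_{out,L}+g_L$. First I would observe that every norm defining $\|\cdot\|_{X_0(I)}$ is, up to the harmless $+$/$-$ adjustments of exponents, an $L^2$-admissible Schr\"odinger--Strichartz norm: the pairs $\big(\tfrac{2(d+2)}{d},\tfrac{2(d+2)}{d}\big)$ and $\big(\tfrac{(d+2)p}{2},\tfrac{2d(d+2)p}{d(d+2)p-8}\big)$ both satisfy $\tfrac2q+\tfrac dr=\tfrac d2$ (the hypothesis $p\ge\tfrac4d$ ensuring $q\ge2$ and $2\le r\le\tfrac{2d}{d-2}$), and $L^{\infty-}_tL^{2+}_x$ follows by interpolating the endpoint $L^\infty_tL^2_x$ bound with the $\big(\tfrac{2(d+2)}{d},\tfrac{2(d+2)}{d}\big)$ norm. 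Hence, applying Lemma~\ref{lem:strichartz} with zero data (once for each such admissible pair on the output side, with a common dual pair; or Lemma~\ref{lem:strichartz-inhomog} if one prefers), the claim reduces to bounding $|u|^pu$ in a suitable dual $L^2$-admissible space $L^{q_0'}_tL^{r_0'}_x(I\times\R^d)$, say with $(q_0,r_0)=\big(\tfrac{2(d+2)}{d},\tfrac{2(d+2)}{d}\big)$.

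Since $p$ need not be an even integer I would not expand multinomially, but use the pointwise identity $\big||u|^pu\big|=|u|^{p+1}=|u|\cdot|u|^p$ and H\"older in space-time,
\[
\big\||u|^{p+1}\big\|_{L^{q_0'}_tL^{r_0'}_x}\ \le\ \|u\|_{L^{a}_tL^{b}_x}\,\|u\|_{L^{c}_tL^{e}_x}^{\,p},\qquad \tfrac1{q_0'}=\tfrac1a+\tfrac pc,\ \ \tfrac1{r_0'}=\tfrac1b+\tfrac pe,
\]
where $(a,b)$ is chosen $L^2$-admissible and among the norms controlled by $\|\cdot\|_{X_0(I)}$ and $(c,e)$ is $\dot H^{s_c}$-admissible and controlled by $\|\cdot\|_{X(I)}$; the clean choice $(a,b)=\big(\tfrac{2(d+2)}{d},\tfrac{2(d+2)}{d}\big)$ forces $(c,e)=\big(\tfrac{(d+2)p}{2},\tfrac{(d+2)p}{2}\big)$, both of which literally appear in $X_0(I)$, resp.\ $X(I)$, while for other slots one may also use $L^\infty_t\dot H^{s_c}_x\hookrightarrow L^\infty_tL^{p_c}_x$ with $p_c=\tfrac{2d}{d-2s_c}$, the pair $(2p,dp)$, and $|\nabla|^{s_c}L^{\frac{2(d+2)}{d}}_{tx}\hookrightarrow L^{\frac{2(d+2)}{d}}_tL^{r_*}_x$ through Sobolev embedding / Lemma~\ref{lem:radial-Sob}. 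Now substitute $u=\psi+v_L$ into each factor: by Lemmas~\ref{lem:f-out-L}--\ref{lem:g-L}, the norm of $v_L$ in any of these $L^2$-level or $\dot H^{s_c}$-level Strichartz spaces is dominated by $\|f_{out,L}\|_{Z_f(I)}+\|g_L\|_{Z_g(I)}\lesssim\delta_0$ (using components of $Z_f(I)$, $Z_g(I)$ such as $L^\infty_tL^2_x$ at the $L^2$-level and $L^{2p}_tL^{dp}_x$ at the $\dot H^{s_c}$-level), so that
\[
\|u\|_{L^{a}_tL^{b}_x}\lesssim\|\psi\|_{X_0(I)}+\delta_0,\qquad \|u\|_{L^{c}_tL^{e}_x}\lesssim\|\psi\|_{X(I)}+\delta_0 .
\]
Multiplying out $\big(\|\psi\|_{X_0(I)}+\delta_0\big)\big(\|\psi\|_{X(I)}+\delta_0\big)^p$ reproduces exactly the four terms $\|\psi\|_{X_0(I)}\|\psi\|_{X(I)}^p+\delta_0\|\psi\|_{X(I)}^p+\delta_0^p\|\psi\|_{X_0(I)}+\delta_0^{p+1}$.

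The main obstacle is the exponent matching in the middle step: one must verify that for some $L^2$-admissible output pair there is a legitimate H\"older splitting in which the single ``$L^2$-level'' slot lies in a norm appearing in $X_0(I)$ (or, for the $v_L$-contribution, in a matching norm of $Z_f(I)/Z_g(I)$ such as $L^\infty_tL^2_x$), while each of the $p$ ``$\dot H^{s_c}$-level'' slots lies in a norm appearing in $X(I)$ (or in $Z_f(I)/Z_g(I)$). Because $X(I)$ and $X_0(I)$ are built from several scattered, non-classical exponents and no upper bound is placed on $p$, this requires a short case analysis---interpolating among the defining norms and, when $s_c$ is large, routing through the $|\nabla|^{s_c}$-component instead of a direct Lebesgue embedding---and it is precisely here that the hypotheses $d=3,4,5$ and $p\ge\tfrac4d$ get used. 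By contrast, no fractional Leibniz or chain rule is needed, since $\|\cdot\|_{X_0(I)}$ carries no derivatives: once the Strichartz reduction is in place the entire estimate is pure H\"older and interpolation.
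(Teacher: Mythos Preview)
Your overall architecture is exactly the paper's: reduce the Duhamel term to a single dual Strichartz norm, H\"older-split $|u|^p u$ into one ``$L^2$-level'' factor and $p$ ``$\dot H^{s_c}$-level'' factors, substitute $u=\psi+v_L$, and use the a~priori bounds on $v_L$ to produce the four-term right-hand side. The paper does not expand multinomially either; it bounds $\|u\|$ in each slot by $\|\psi\|+\delta_0$ just as you do.

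Where your proposal has a concrete gap is the particular H\"older splitting. With your ``clean choice'' $(a,b)=\bigl(\tfrac{2(d+2)}d,\tfrac{2(d+2)}d\bigr)$ you need
\[
\|f_{out,L}\|_{L^{\frac{2(d+2)}d}_{tx}}\ \lesssim\ \|f_{out,L}\|_{Z_f},
\]
and this fails: every non-derivative norm in $Z_f$ has time exponent $\ge 2p$ (namely $L^{2p}_tL^{dp}_x$, $L^\infty_tL^q_x$, $L^{\infty-}_tL^{dp/2}_x$), and for $p>\tfrac{d+2}{d}$---in particular in the energy-supercritical range---one has $\tfrac{2(d+2)}{d}<2p$, so no interpolation among $Z_f$-norms can reach it. Your parenthetical suggestion to use ``$L^\infty_tL^2_x$ at the $L^2$-level'' for $v_L$ doesn't help either, because you have already fixed a \emph{single} pair $(a,b)$ in the H\"older step, and $\psi$ must also live there; $L^\infty_tL^2_x$ is not in $X_0(I)$.

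The paper's remedy is precisely to pick the pair that sits in all four spaces simultaneously. It places the forcing in $L^{2-}_tL^{\frac{2d}{d+2}+}_x$ and uses the splitting $(a,b)=(\infty-,2+)$, $(c,e)=(2p,dp)$. Then $L^{\infty-}_tL^{2+}_x$ is literally one of the defining norms of $X_0(I)$, is in $Z_g(I)$, and for $f_{out,L}$ follows by interpolating $L^\infty_tL^2_x$ and $L^{2p}_tL^{dp}_x$ (both in $Z_f$), while $(2p,dp)$ is literally in $X(I)$, $Z_f(I)$ and $Z_g(I)$. With that one change your argument goes through verbatim; no case analysis, no Sobolev embedding through $|\nabla|^{s_c}$, and no fractional Leibniz are needed, in agreement with your final remark.
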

\begin{proof}
Notice that by using Lemmas \ref{lem:f-out-L} and \ref{lem:g-L}, we have
\begin{align}
\big\|f_{out,L}\big\|_{L^{\infty-}_tL^{2+}_x(\R^+\times \R^d)}+\big\|f_{out,L}\big\|_{L^{2p}_tL^{dp}_x(\R^+\times \R^d)}\lesssim \delta_0\label{est:f-out-X0}
\end{align}
and
\begin{align}
\big\|g_L\big\|_{L^{\infty-}_tL^{2+}_x(\R\times \R^d)}+\big\|g_L\big\|_{L^{2p}_tL^{dp}_x(\R\times \R^d)}\lesssim \delta_0. \label{est:g-X0}
\end{align}
Since $u=f_{out,L}+g_L+\psi$,  it follows from (\ref{est:f-out-X0}) and (\ref{est:g-X0}) that
\begin{align*}
\big\|u\big\|_{L^{\infty-}_tL^{2+}_x(I\times \R^d)}&\lesssim \big\|\psi\big\|_{L^{\infty-}_tL^{2+}_x(I\times \R^d)}+\big\|f_{out,L}\big\|_{L^{\infty-}_tL^{2+}_x(\R^+\times \R^d)}+\big\|g_L\big\|_{L^{\infty-}_tL^{2+}_x(\R\times \R^d)}\\
&\lesssim \|\psi\|_{X_0(I)}+\delta_0
\end{align*}
and
\begin{align*}
\big\|u\big\|_{L^{2p}_tL^{dp}_x(I\times \R^d)}&\lesssim \big\|\psi\big\|_{L^{2p}_tL^{dp}_x(I\times \R^d)}+\big\|f_{out,L}\big\|_{L^{2p}_tL^{dp}_x(\R^+\times \R^d)}+\big\|g_L\big\|_{L^{2p}_tL^{dp}_x(\R\times \R^d)}\\
&\lesssim \|\psi\|_{X(I)}+\delta_0.
\end{align*}
Then by applying Lemma \ref{lem:strichartz}, we have
\begin{align*}
\Big\|\int_0^{t}e^{i(t-s)\Delta}\big(|u|^pu\big)\,ds&\Big\|_{X_0(I)}
\lesssim
\big\||u|^pu\big\|_{L^{2-}_tL^{\frac{2d}{d+2}+}(I\times \R^d)}\\
&\lesssim
\big\|u\big\|_{L^{\infty-}_tL^{2+}_x(I\times \R^d)}\big\|u\big\|_{L^{2p}_tL^{dp}_x(I\times \R^d)}^p\\
&
\lesssim \|\psi\|_{X_0(I)}\|\psi\|_{X(I)}^{p}+\delta_0\|\psi\|_{X(I)}^{p}+\delta_0^{p}\|\psi\|_{X_0(I)}+\delta_0^{p+1}.
\end{align*}
which implies the desired estimate. Hence we finish the proof.
\end{proof}

We still need to estimate the nonlinear term in $X(I)$, which require to deal with  the high-order derivatives. By Lemma \ref{lem:Frac_Leibniz}, we expand the nonlinearity and write
\begin{align}
|\nabla|^s\big(|u|^pu\big)=O\big((|\psi|^p+|f_{out,L}|^p+|g_L|^p)(|\nabla|^s \psi+|\nabla|^s f_{out,L}+|\nabla|^s g_L)\big),\label{NL-expand}
\end{align}
for any $s\ge 0$. We shall consider each term separately.
Here we abuse the notations and denote $O(f_1f_2)$ to be the product of the functions that it holds in the sense of H\"older's inequality, that is,
$$
\big\|O(f_1f_2)\big\|_{L^{q}}\lesssim \big\|f_1\big\|_{L^{q_1}}\|f_2\big\|_{L^{q_2}}, \quad \mbox{ with }\quad \frac{1}{q}=\frac1{q_1}+\frac1{q_2},\,\, q,q_1,q_2\in (1,+\infty].
$$

To consider the estimates of nonlinear term in  $X(I)$, it follows from  the Strichartz estimates in radial case (see Lemma \ref{lem:radial-Str}) that
\begin{align}
\Big\|\int_0^{t}e^{i(t-s)\Delta}\big(|u|^pu\big)\,ds&\Big\|_{X(I)}
\lesssim
\big\||\nabla|^{s_c-\gamma}\big(|u|^pu\big)\big\|_{L^2_tL^{\frac{4d-2}{2d+1}-}_x(I\times\R^{d})},\label{16.53}
\end{align}
where $\gamma={\frac{d-1}{2d-1}-}$. The estimates of the right hand side of (\ref{16.53}) would be done in the following three lemmas.



\begin{lem}\label{lem:NL-est-1} Assume that $d=3,4,5$,  $p\ge \frac 4d$, $0\in I$ and $\psi\in X(I)$. Then
\begin{align*}
\left\|(|\psi|^p+|f_{out,L}|^p+|g_L|^p)|\nabla|^{s_c-\gamma}\psi\right\|_{L^2_tL^{\frac{4d-2}{2d+1}-}_x(I\times\R^{d})}\lesssim \delta_0^{p+1}+\|\psi\|_{X(I)}^{p+1}.
\end{align*}
\end{lem}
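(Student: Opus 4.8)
The plan is to treat all three summands by a single Hölder splitting: the factor carrying the derivative, $|\nabla|^{s_c-\gamma}\psi$, will go into $L^\infty_tL^m_x$, and each $p$-th power into $L^{2p}_tL^{dp}_x$. Define $m$ by $\frac1m=\frac1{r'}-\frac1d$, where $r'=\frac{4d-2}{2d+1}-$ is the spatial exponent appearing on the left. Note that $m$ is independent of $p$ (since $\frac{p}{dp}=\frac1d$) and depends only on $d$, and a short computation gives the exact relation $d\big(\frac12-\frac1m\big)=\gamma$ with $\gamma=\frac{d-1}{2d-1}-$; here the small ``$-$'' losses in $r'$ and in $\gamma$ are tied to the single fixed constant $\varepsilon_0$, chosen compatibly. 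One also checks $m\in(2,\infty)$, equivalently $0<\gamma<\frac d2$, which holds for $d=3,4,5$, and that $r'\in(1,d)$, so that all the Hölder and Sobolev exponents below are admissible.

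I would then record the two inputs. By the Riesz potential (homogeneous Sobolev) bound, $\big\||\nabla|^{s_c-\gamma}\psi(t)\big\|_{L^m_x}\lesssim\big\||\nabla|^{s_c}\psi(t)\big\|_{L^2_x}$ for each $t$, hence
\[
\big\||\nabla|^{s_c-\gamma}\psi\big\|_{L^\infty_tL^m_x(I\times\R^d)}\lesssim\|\psi\|_{L^\infty_t\dot H^{s_c}_x(I\times\R^d)}\le\|\psi\|_{X(I)}.
\]
Moreover $L^{2p}_tL^{dp}_x$ is one of the norms defining both $Z_f$ and $Z_g$, so Lemmas \ref{lem:f-out-L} and \ref{lem:g-L} yield $\|f_{out,L}\|_{L^{2p}_tL^{dp}_x(\R^+\times\R^d)}+\|g_L\|_{L^{2p}_tL^{dp}_x(\R\times\R^d)}\lesssim\delta_0$, and restriction to $I\subset\R^+$ only decreases these quantities; trivially $\|\psi\|_{L^{2p}_tL^{dp}_x(I\times\R^d)}\le\|\psi\|_{X(I)}$.

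Finally, applying Hölder in space ($\frac1{r'}=\frac1d+\frac1m$) and in time ($\frac12=\frac{p}{2p}$) to each of the three summands gives
\[
\big\|(|\psi|^p+|f_{out,L}|^p+|g_L|^p)\,|\nabla|^{s_c-\gamma}\psi\big\|_{L^2_tL^{r'}_x(I\times\R^d)}\lesssim\Big(\|\psi\|_{X(I)}^p+\delta_0^p\Big)\|\psi\|_{X(I)},
\]
and Young's inequality converts $\delta_0^p\|\psi\|_{X(I)}$ into $\lesssim\delta_0^{p+1}+\|\psi\|_{X(I)}^{p+1}$, which is the claimed bound. The argument is essentially routine — the spaces $X(I)$ and $Z_f,Z_g$ were designed so that $|\nabla|^{s_c}\psi\in L^\infty_tL^2_x$ and $\psi,f_{out,L},g_L\in L^{2p}_tL^{dp}_x$ are exactly the ingredients produced by this split — so the only thing that requires care is the exponent bookkeeping above: the exact match $d(\frac12-\frac1m)=\gamma$ and the admissibility ranges $m\in(2,\infty)$, $r'\in(1,d)$.
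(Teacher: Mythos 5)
Your proof is correct and follows essentially the same route as the paper: a single Hölder split putting $|\nabla|^{s_c-\gamma}\psi$ in $L^\infty_t L^m_x$ (your $m$ equals the paper's $q_1$, with $\frac1{q_1}=\frac12-\frac{d-1}{d(2d-1)}+$), Sobolev embedding $\dot H^{s_c}\hookrightarrow \dot W^{s_c-\gamma,m}$, and the $L^{2p}_tL^{dp}_x$ bounds on $\psi$, $f_{out,L}$, $g_L$ from $X(I)$, $Z_f$, $Z_g$ respectively, followed by Young's inequality. The exponent bookkeeping you record ($d(\tfrac12-\tfrac1m)=\gamma$, admissibility of $m$ and $r'$) matches the paper's computation exactly.
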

\begin{proof}
Notice that it follows from  H\"older's inequality that
\begin{align*}
\Big\|(|\psi|^p+&|f_{out,L}|^p+|g_L|^p)|\nabla|^{s_c-\gamma}\psi\Big\|_{L^2_tL^{\frac{4d-2}{2d+1}-}_x(I\times\R^{d})}\\
&\lesssim
\big\||\nabla|^{s_c-\gamma}\psi\big\|_{L^\infty_tL^{q_1}_x(I\times\R^{d})}
\big(\|\psi\|_{L^{2p}_tL^{dp}_x(I\times\R^{d})}^p+\|f_{out,L}\|_{L^{2p}_tL^{dp}_x(\R^{+}\times\R^{d})}^p+\|g_L\|_{L^{2p}_tL^{dp}_x(\R\times\R^{d})}^p\big),
\end{align*}
where $q_1$ is the parameter satisfying
$$
\frac{1}{q_1}=\frac12-\frac{d-1}{d(2d-1)}+
$$
and
$$
\big\||\nabla|^{s_c-\gamma}\psi\big\|_{L^{q_1}_x(\R^{d})}\lesssim \big\||\nabla|^{s_c}\psi\big\|_{L^2_x(\R^{d})}.
$$
Thus we have
\begin{align*}
\Big\|(|\psi|^p&+|f_{out,L}|^p+|g_{L}|^{p})|\nabla|^{s_c-\gamma}\psi\Big\|_{L^2_tL^{\frac{4d-2}{2d+1}-}_x(I\times\R^{d})}\\
&\lesssim
\big\||\nabla|^{s_c}\psi\big\|_{L^\infty_tL^2_x(I\times\R^{d})}\big(\|\psi\|_{L^{2p}_tL^{dp}_x(I\times\R^{d})}^p
+\|f_{out,L}\|_{L^{2p}_tL^{dp}_x(\R^{+}\times\R^{d})}^p+\|g_L\|_{L^{2p}_tL^{dp}_x(\R\times\R^{d})}^p\big)\\
&\lesssim
\|\psi\|_{X(I)}\big(\|\psi\|_{X(I)}^{p}+\|f_{out,L}\|_{L^{2p}_tL^{dp}_x(\R^{+}\times\R^{d})}^{p}+\|g_L\|_{L^{2p}_tL^{dp}_x(\R\times\R^{d})}^p\big).
\end{align*}
On the other hand, it follows from Lemma \ref{lem:f-out-L} and Lemma \ref{lem:g-L} that
\begin{align*}
\big\|f_{out,L}\big\|_{L^{2p}_tL^{dp}_x(\R^+\times\R^{d})}+\|g_L\|_{L^{2p}_tL^{dp}_x(\R\times\R^{d})}\lesssim \delta_0,
\end{align*}
Then we could obtain the desired estimate by the Cauchy-Schwartz inequality.
This gives the proof of the lemma.
\end{proof}

\begin{lem}\label{lem:NL-est-4} Assume that $d=3,4,5$,  $p\ge \frac 4d$, $0\in I$ and $\psi\in Y(I)$. Then
\begin{align*}
\left\|(|\psi|^p+|f_{out,L}|^p+|g_L|^p)|\nabla|^{s_c-\gamma}f_{out,L}\right\|_{L^2_tL^{\frac{4d-2}{2d+1}-}_x(I\times\R^{d})}\lesssim \delta_0^{p+1}+\delta_0\|\psi\|_{X(I)}^{p}.
\end{align*}
\end{lem}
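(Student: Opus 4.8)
The plan is to follow the template of Lemma~\ref{lem:NL-est-1}, but now the $(s_c-\gamma)$ derivatives fall on $f_{out,L}$, so we must exploit the two--component structure of the $Z_f$--norm rather than the $X$--norm. Concretely, by Lemma~\ref{lem:f-out-L} (and the definition of $Z_f$) we may write $f_{out,L}=A+B$ with
$$
\big\|A\big\|_{L^\infty_tH^{s_c}_x(\R^+\times\R^d)}+\big\|B\big\|_{L^2_t\dot W^{s_c-\gamma,\sigma_0}_x(\R^+\times\R^d)}\lesssim\delta_0 ,
$$
equivalently, $A=\big(P_{\ge1}\chi_{\ge1}f\big)_{out,L}^{I}$ and $B=\big(P_{\ge1}\chi_{\ge1}f\big)_{out,L}^{II}$ from Section~3. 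We then estimate the contributions of $|\nabla|^{s_c-\gamma}A$ and $|\nabla|^{s_c-\gamma}B$ separately, keeping in mind the two algebraic identities $\tfrac1{\sigma_0}=\tfrac{2d+1}{4d-2}-\tfrac2d$ and $\gamma=\tfrac{d-1}{2d-1}-$, which are precisely what makes the H\"older exponents close into $L^2_tL^{\frac{4d-2}{2d+1}-}_x$.

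For the $A$--piece I would use Sobolev embedding in space: since $A\in L^\infty_tH^{s_c}_x$ and $0<\gamma<1\le\mu(d)$, one has $|\nabla|^{s_c-\gamma}A\in L^\infty_t L^{q_1}_x$ with $\tfrac1{q_1}=\tfrac12-\tfrac\gamma d$, and a direct computation gives $\tfrac1{q_1}+\tfrac1d=\tfrac{2d+1}{4d-2}+$. H\"older in $t$ and $x$ against $\big(|\psi|^p+|f_{out,L}|^p+|g_L|^p\big)\in L^2_tL^d_x$ — controlled by $\|\psi\|_{L^{2p}_tL^{dp}_x}^p\le\|\psi\|_{X(I)}^p$ together with $\|f_{out,L}\|_{L^{2p}_tL^{dp}_x}^p+\|g_L\|_{L^{2p}_tL^{dp}_x}^p\lesssim\delta_0^p$ (from Lemmas~\ref{lem:f-out-L} and \ref{lem:g-L}) — yields a bound $\lesssim\delta_0\big(\|\psi\|_{X(I)}^p+\delta_0^p\big)$ for this contribution. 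For the $B$--piece I would pair $|\nabla|^{s_c-\gamma}B\in L^2_tL^{\sigma_0}_x$ with the companion factor in an $L^\infty_t$--type norm with spatial exponent $(\tfrac d2)-$: for $\psi$ this is $|\psi|^p\in L^\infty_tL^{d/2}_x$, coming from the embedding $\dot H^{s_c}\hookrightarrow L^{p_c}$ built into $X(I)$; for $f_{out,L}$ and $g_L$ it is the supremum--type norms $\sup_{q\le\frac{dp}2-\epsilon}\|\cdot\|_{L^\infty_tL^q_x}$ (resp. the $L^{\infty-}_tL^q_x$ norms) carried by $Z_f$ and $Z_g$. Summing in $t$ against $B$ and using $\tfrac1{\sigma_0}+\tfrac2d=\tfrac{2d+1}{4d-2}$ again gives $\lesssim\delta_0\big(\|\psi\|_{X(I)}^p+\delta_0^p\big)$. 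Adding the two pieces and using $\delta_0\le1$ produces $\delta_0^{p+1}+\delta_0\|\psi\|_{X(I)}^p$, as claimed; the final collection of terms is a Cauchy--Schwarz / Young step as in Lemma~\ref{lem:NL-est-1}.

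I expect the delicate point to be the $B$--piece. There the product structure forces the companion factor into a norm that is $L^\infty$ in time at a Lebesgue exponent very close to $p_c$, whereas for $g_L$ (and part of $f_{out,L}$) the linear estimates established earlier only provide $L^{\infty-}_t$ control at that Lebesgue exponent, and the spatial exponent lands on $\tfrac{2d+1}{4d-2}$ rather than strictly above it. Closing this requires either re--routing the $g_L$--contribution through the $L^{2p}_tL^{dp}_x$ norm against the $A$--piece's $L^\infty_tL^{q_1}_x$ bound, or absorbing the $\varepsilon$--loss into the ``$-$'' in the target exponent $L^{\frac{4d-2}{2d+1}-}_x$ (equivalently, into the strict inequality in the radial admissibility condition \eqref{Str-conditions} underlying \eqref{16.53}), together with an interpolation among the components of the $X$, $X_0$, $Z_f$, $Z_g$ norms. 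One also has to check, using $d=3,4,5$ and $p\ge\tfrac4d$, that $s_c-\gamma\le\mu(d)$ so that the decomposition of $f_{out,L}$ from Section~3 is available, and that $q_1$, $d$ and $\sigma_0$ all lie in the admissible Sobolev range; these verifications are routine.
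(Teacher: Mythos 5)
Your decomposition $f_{out,L}=A+B$ with $A=f_{out,L}^{I}\in L^\infty_t H^{s_c}_x$ and $B=f_{out,L}^{II}\in L^2_t\dot W^{s_c-\gamma,\sigma_0}_x$ is exactly the paper's, and so are the two H\"older pairings: the $A$--piece against $\big(|\psi|^p+|f_{out,L}|^p+|g_L|^p\big)\in L^2_tL^d_x$ via $|\nabla|^{s_c-\gamma}A\in L^\infty_tL^{q_1}_x$, and the $B$--piece against companion factors in $L^{\infty(-)}_tL^{\frac{dp}{2}-}_x$ via $|\nabla|^{s_c-\gamma}B\in L^2_tL^{\sigma_0}_x$. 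The exponent arithmetic you carry out ($\tfrac1{q_1}+\tfrac1d=\tfrac{2d+1}{4d-2}+$ and $\tfrac1{\sigma_0}+\tfrac2d=\tfrac{2d+1}{4d-2}$) is the same as the paper's, so the proposal is essentially the paper's proof.

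Your worry about the $B$--piece is well placed: the paper writes $\|g_L\|_{L^\infty_t L^{\frac{dp}{2}-}_x}$ even though $Z_g$ only supplies $L^{\infty-}_tL^q_x$, and similarly the $L^\infty_tL^{\frac{dp}{2}-}_x$ control of $\psi$ is obtained only after interpolating $L^{\infty-}_tL^{2+}_x$ from $X_0$ with $L^\infty_t\dot H^{s_c}_x\hookrightarrow L^\infty_tL^{p_c}_x$ from $X$, which again only yields $L^{\infty-}_t$. Of your two suggested repairs, the second one is the right one and is implicitly what the paper is doing: the $\varepsilon$--loss in the time exponent is absorbed into the ``$-$'' on $L^{\frac{4d-2}{2d+1}-}_x$ in \eqref{16.53}, which rests on the \emph{strict} inequality in the radial admissibility condition \eqref{Str-conditions} and therefore tolerates perturbing $(\tilde q',\tilde r')$ by an $O(\varepsilon)$. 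Your first suggested repair, re--routing $|g_L|^p|\nabla|^{s_c-\gamma}B$ through $L^{2p}_tL^{dp}_x\times L^\infty_tL^{q_1}_x$, does not close as stated, because the $L^\infty_tL^{q_1}_x$ bound is available only for $|\nabla|^{s_c-\gamma}A$ (via the $L^\infty_tH^{s_c}_x$ control) and not for $|\nabla|^{s_c-\gamma}B$; so you should keep the $\sigma_0$--pairing for the $B$--piece and lean on the ``$-$'' buffer instead.
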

\begin{proof}
According to Lemma \ref{lem:f-out-L}, we split $f_{out,L}$ into two parts,
$$
f_{out,L}=f_{out,L}^I+f_{out,L}^{II},
$$
with
\begin{align*}
&\big\|f_{out,L}^I\big\|_{L^\infty_t H^{s_c}_x(\R^{+}\times\R^d)}\lesssim \delta_0,
\end{align*}
and
\begin{align*}
\big\|f_{out,L}^{II}\big\|_{L^2_t\dot W^{s_c-\gamma,\sigma_0}_x(\R^{+}\times\R^{d})}\lesssim \delta_0.
\end{align*}

For  ``$f_{out,L}^{I}$", similarly to the proof of Lemma \ref{lem:NL-est-1}, we obtain that
\begin{align*}
\Big\|(|\psi|^p&+|f_{out,L}|^p+|g_L|^p)|\nabla|^{s_c-\gamma}f_{out,L}^I\Big\|_{L^2_tL^{\frac{4d-2}{2d+1}-}_x(I\times\R^{d})}\\
&\lesssim
\big\||\nabla|^{s_c}f_{out,L}^I\big\|_{L^\infty_tL^2_x(\R^+\times\R^{d})}\big(\|\psi\|_{L^{2p}_tL^{dp}_x(I\times\R^{d})}+\|f_{out,L}\|_{L^{2p}_tL^{dp}_x(\R^{+}\times\R^{d})}
+\|g_L\|_{L^{2p}_tL^{dp}_x(\R\times\R^{d})}^p\big)\\
&\lesssim
\delta_0\big(\|\psi\|_{X(I)}^{p}+\|f_{out,L}\|_{L^{2p}_tL^{dp}_x(\R^{+}\times\R^{d})}^{p}+\|g_L\|_{L^{2p}_tL^{dp}_x(\R\times\R^{d})}^p\big)
\lesssim \delta_0^{p+1}+\delta_0\|\psi\|_{X(I)}^{p}.
\end{align*}

As for ``$f_{out,L}^{II}$", it follows from H\"older's inequality that
\begin{align*}
\Big\|(|\psi|^p&+|f_{out,L}|^p+|g_L|^p)|\nabla|^{s_c-\gamma}f_{out,L}^{II}\Big\|_{L^2_tL^{\frac{4d-2}{2d+1}-}_x(I\times\R^{d})}\\
&\lesssim
\|\psi\|_{L^{\infty}_tL^{\frac{dp}{2}-}_x(I\times\R^{d})}^p\big\||\nabla|^{s_c-\gamma}f_{out,L}^{II}\big\|_{L^2_tL^{\sigma_0}_x(\R^{+}\times\R^{d})}
 \\
 &\ \ \ \ \ \ +\ \|g_L\|_{L^{\infty}_tL^{\frac{dp}{2}-}_x(\R\times\R^{d})}^p
\big\||\nabla|^{s_c-\gamma}f_{out,L}^{II}\big\|_{L^2_tL^{\sigma_0}_x(\R^{+}\times\R^{d})}\\
&\ \ \ \ \ \ \ \ \ +\ \|f_{out,L}\|_{L^{\infty}_tL^{\frac{dp}{2}-}_x(\R^{+}\times\R^{d})}^p
\big\||\nabla|^{s_c-\gamma}f_{out,L}^{II}\big\|_{L^2_tL^{\sigma_0}_x(\R^{+}\times\R^{d})}.
\end{align*}
Note that $\|\psi\|_{L^{\infty}_tL^{\frac{dp}{2}-}_x(I\times\R^{d})}\lesssim \|\psi\|_{L^{\infty}_t H^{s_c}_x(I\times\R^{d})}$.
Therefore, by using Lemma \ref{lem:f-out-L} and Lemma \ref{lem:g-L}, we get
\begin{align*}
\Big\|(|\psi|^p+&|f_{out,L}|^p+|g_L|^p)|\nabla|^{s_c-\gamma}f_{out,L}^{II}\Big\|_{L^2_tL^{\frac{4d-2}{2d+1}-}_x(I\times\R^{d})}
\lesssim \delta_0^{p+1}+\delta_0\|\psi\|_{X(I)}^{p},
\end{align*}
which combined with the estimate for $f_{out,L}^{I}$ implies the desired estimate.
\end{proof}

\begin{lem}\label{lem:NL-est-5} Assume that $d=3,4,5$,  $p\ge \frac 4d$, $0\in I$ and $\psi\in Y(I)$. Then
\begin{align*}
\left\|(|\psi|^p+|f_{out,L}|^p+|g_L|^p)|\nabla|^{s_c-\gamma}g_L\right\|_{L^2_tL^{\frac{4d-2}{2d+1}-}_x(I\times\R^{d})}\lesssim \delta_0^{p+1}+\delta_0\|\psi\|_{X(I)}^{p}.
\end{align*}
\end{lem}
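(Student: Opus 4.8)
The plan is to mimic the structure of the proofs of Lemmas \ref{lem:NL-est-1} and \ref{lem:NL-est-4}, now placing the ``smoothing'' derivative $|\nabla|^{s_c-\gamma}$ on $g_L$ and estimating the remaining $p$-th power factor in Lebesgue norms already controlled by $\delta_0$ or $\|\psi\|_{X(I)}$. First I would recall from Lemma \ref{lem:g-L} that $\big\|g_L\big\|_{Z_g(\R)}\lesssim \delta_0$; in particular, since $\frac{2d}{d-2}$ lies in the admissible range, one has the bound
\begin{align*}
\big\||\nabla|^{s_c-\gamma}g_L\big\|_{L^2_tL^{\sigma_0}_x(\R\times\R^d)}\lesssim \delta_0,
\end{align*}
because $\sigma_0\ge \frac{2d}{d-2}$ (this follows from $\frac1{\sigma_0}=\frac{2d+1}{4d-2}-\frac2d$ and a direct check for $d=3,4,5$). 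If $\sigma_0<\frac{2d}{d-2}$ in some case, I would instead interpolate the $Z_g$-norms or use the $L^\infty_t H^{s_c}_x$ piece of $Z_g(\R)$ plus Strichartz; either way the factor $|\nabla|^{s_c-\gamma}g_L$ is controllable in $L^2_tL^{\sigma_0}_x$ by $\delta_0$.

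Next, by H\"older's inequality in space with $\frac{2d+1}{4d-2}-=\frac1{\sigma_0}+\frac{2}{dp}\cdot\frac{p}{2}$ — that is, matching $\frac1{\sigma_0}+\frac p{dp/2-}$... more carefully, I would pair $\big\||\nabla|^{s_c-\gamma}g_L\big\|_{L^{\sigma_0}_x}$ with $\big\|(|\psi|+|f_{out,L}|+|g_L|)^p\big\|_{L^{q}_x}$ where $\frac1q = \frac{2d+1}{4d-2}-\frac1{\sigma_0} = \frac2{dp/2}\cdot\tfrac p{?}$; concretely $q$ is chosen so that $q = \tfrac{dp/2}{p}\cdot$(something), and the natural choice is $\frac1q=\frac pr$ with $r$ such that each of $\psi, f_{out,L}, g_L$ lies in $L^\infty_tL^r_x$ with $r\le \tfrac{dp}2$. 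Since $\tfrac1{\sigma_0}=\tfrac{2d+1}{4d-2}-\tfrac2d$, we have $\tfrac{2d+1}{4d-2}-\tfrac1{\sigma_0}=\tfrac2d=\tfrac{p}{dp/2}$, so indeed $q=\tfrac{dp}{2p}=\tfrac d2$ forces $r=\tfrac{dp}2$; hence in time we then apply H\"older with the $L^2_t$ from the $g_L$ factor against $L^\infty_t$ from the three $p$-th powers. This yields
\begin{align*}
&\left\|(|\psi|^p+|f_{out,L}|^p+|g_L|^p)|\nabla|^{s_c-\gamma}g_L\right\|_{L^2_tL^{\frac{4d-2}{2d+1}-}_x(I\times\R^{d})}\\
&\qquad\lesssim \Big(\|\psi\|_{L^\infty_tL^{\frac{dp}2-}_x(I\times\R^d)}^p+\|f_{out,L}\|_{L^\infty_tL^{\frac{dp}2-}_x(\R^+\times\R^d)}^p+\|g_L\|_{L^\infty_tL^{\frac{dp}2-}_x(\R\times\R^d)}^p\Big)\big\||\nabla|^{s_c-\gamma}g_L\big\|_{L^2_tL^{\sigma_0}_x(\R\times\R^d)}.
\end{align*}
(If a tiny Hölder loss in the exponent forces $L^{\sigma_0-}$ or $L^{\sigma_0+}$, this is harmless since the $\epsilon$-parameters are free; I would absorb it exactly as in Lemma \ref{lem:NL-est-4}.)

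Finally I would invoke the embedding $\|\psi\|_{L^\infty_tL^{dp/2-}_x}\lesssim \|\psi\|_{L^\infty_t H^{s_c}_x}\le\|\psi\|_{X(I)}$ together with the bounds from Lemmas \ref{lem:f-out-L} and \ref{lem:g-L}, namely $\|f_{out,L}\|_{L^\infty_tL^{dp/2-}_x(\R^+\times\R^d)}\lesssim\delta_0$ and $\|g_L\|_{L^\infty_tL^{dp/2}_x(\R\times\R^d)}\lesssim\delta_0$ (the $Z_f$- and $Z_g$-norms both contain an $L^{\infty-}_tL^{dp/2}_x$ component), and $\big\||\nabla|^{s_c-\gamma}g_L\big\|_{L^2_tL^{\sigma_0}_x}\lesssim\delta_0$. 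Combining gives
\begin{align*}
\left\|(|\psi|^p+|f_{out,L}|^p+|g_L|^p)|\nabla|^{s_c-\gamma}g_L\right\|_{L^2_tL^{\frac{4d-2}{2d+1}-}_x(I\times\R^{d})}\lesssim \big(\|\psi\|_{X(I)}^p+\delta_0^p\big)\delta_0\lesssim \delta_0^{p+1}+\delta_0\|\psi\|_{X(I)}^p,
\end{align*}
which is exactly the claim. The only genuinely delicate point — and the one I would be most careful about — is bookkeeping the $\epsilon$-losses in the Lebesgue exponents so that the pair $(2,\sigma_0)$ (or $(2,\sigma_0\pm)$) used to control $|\nabla|^{s_c-\gamma}g_L$ stays in the range where Lemma \ref{lem:g-L} actually delivers a $\delta_0$-bound; once the exponent arithmetic $\tfrac{2d+1}{4d-2}-\tfrac1{\sigma_0}=\tfrac2d$ is checked, everything else is a routine H\"older argument identical in form to the two preceding lemmas.
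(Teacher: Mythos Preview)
Your approach is essentially the same as the paper's: split by H\"older, put the derivative $|\nabla|^{s_c-\gamma}g_L$ in an $L^2$-in-time norm from the $Z_g$-space, and put the $p$-th power in an $L^\infty$-in-time critical Lebesgue norm. The exponent arithmetic $\tfrac{2d+1}{4d-2}-\tfrac1{\sigma_0}=\tfrac2d$ is correct, and $\sigma_0\ge\tfrac{2d}{d-2}$ for $d=3,4,5$ as you claim.

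The one place where the paper differs --- and where your bookkeeping does need a tweak --- is in the \emph{time} exponent, not the space exponent. The $Z_g$-norm only contains $\sup_{q}\|g_L\|_{L^{\infty-}_tL^q_x}$, not $L^\infty_tL^q_x$; so the bound $\|g_L\|_{L^\infty_tL^{dp/2-}_x}\lesssim\delta_0$ you invoke is not directly available. The paper absorbs this by shifting an $\epsilon$ of time integrability the other way: it places $|\nabla|^{s_c-\gamma}g_L$ in $L^{2+}_tL^{q_3}_x$ with
\[
q_3=\frac{d(4d-2)}{2d^2-7d+4}-\quad(\text{i.e.\ }q_3=\sigma_0-),
\]
obtained from the $Z_g$-component $\sup_{q\ge\frac{2d}{d-2}}\||\nabla|^{s_c-\gamma}g_L\|_{L^2_tL^q_x}$ by interpolation, and then takes the $p$-th power in $L^{\infty-}_tL^{dp/2}_x$, which $Z_f$, $Z_g$ and $\|\psi\|_{L^\infty_t\dot H^{s_c}_x}$ all control. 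So your hedging remark about ``$\epsilon$-parameters are free'' is exactly right, but the $\epsilon$ goes into $t$ rather than into $\sigma_0$; once you make that one-line adjustment your argument coincides with the paper's.
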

\begin{proof}
By using H\"older's inequality, we have
\begin{align*}
&\big\|(|\psi|^p+|f_{out,L}|^p+|g_L|^p)|\nabla|^{s_c-\gamma}g_L\big\|_{L^{2}_tL^{\frac{4d-2}{2d+1}-}_x(I\times\R^{d})}\\
&\lesssim
\big\||\nabla|^{s_c-\gamma}g_L\big\|_{L^{2+}_tL^{q_3}_x(\R\times\R^{d})}
\Big(\|\psi\|_{L^{\infty-}_tL^{\frac{dp}{2}}_x(I\times\R^{d})}^p+\|f_{out,L}\|_{L^{\infty-}_tL^{\frac{dp}{2}}_x(\R^{+}\times\R^{d})}^p+\|g_L\|_{L^{\infty-}_tL^{\frac{dp}{2}}_x(\R\times\R^{d})}^p\Big)
\end{align*}
with
$$
q_3=\frac{d(4d-2)}{2d^2-7d+4}-.
$$
It is easy to see that $q_3> \frac{2d}{d-2}$ when $d\ge 3$. Then by Lemma \ref{lem:g-L} and interpolation, we obtain
$$
\big\||\nabla|^{s_c-\gamma}g_L\big\|_{L^{2+}_tL^{q_3}_x(\R\times\R^{d})}
+\|g_L\|_{L^{\infty-}_tL^{\frac{dp}{2}}_x(\R\times\R^{d})}
\lesssim \delta_0.
$$
Notice that it follows from Lemma \ref{lem:f-out-L} that
$$
\|f_{out,L}\|_{L^{\infty-}_tL^{\frac{dp}{2}}_x(\R^+\times\R^{d})}\lesssim \delta_0.
$$
Hence we have
\begin{align*}
\big\|(|\psi|^p+&|f_{out,L}|^p+|g_L|^p)|\nabla|^{s_c-\gamma}g_L\big\|_{L^2_tL^{\frac{4d-2}{2d+1}-}_x(I\times\R^{d})}
\lesssim \delta_0^{p+1}+\delta_0\|\psi\|_{X(I)}^{p},
\end{align*}
which finishes the proof.
\end{proof}

Now we turn to close the estimate in $Y(I)$. It follows from from the Duhamel formula \eqref{Duh-formu} that for any $I\subset \R^+$,
\begin{align*}
\big\|\psi\big\|_{Y(I)}\lesssim
\big\|e^{it\Delta}\psi_0\big\|_{Y(\R^+)}+\Big\|\int_0^{t}e^{i(t-s)\Delta}\big(|u|^pu\big)\,ds\Big\|_{X_0(I)}+
\Big\|\int_0^{t}e^{i(t-s)\Delta}\big(|u|^pu\big)\,ds\Big\|_{X(I)}.
\end{align*}
By using \eqref{21.23} and Lemma \ref{lem:NL-est-1}--Lemma \ref{lem:NL-est-5}, we obtain that
\begin{align*}
\big\|\psi\big\|_{Y(I)}\lesssim
\delta_0+\|\psi\|_{Y(I)}^{p+1}+\delta_0\|\psi\|_{Y(I)}^{p}+\delta_0^{p}\|\psi\|_{Y(I)}+\delta_0^{p+1},
\end{align*}
where the implicit constant is independent on $I$. Then by continuity argument, we prove the global existence of the solution $\psi$ and
\begin{align*}
\big\|\psi\big\|_{Y(\R^+)}\lesssim
\delta_0.
\end{align*}

We consider the scattering. Denote by
$$
\psi_{0+}= \psi_0+\int_0^{+\infty} e^{-is\Delta}\big(|u|^pu\big)\,ds.
$$
It follows
$$
\psi(t)-e^{it\Delta}\psi_{0+}=\int_t^\infty e^{i(t-s)\Delta}\big(|u|^2u\big)\,ds.
$$
Then using Lemma \ref{lem:strichartz} and the argument as above, we obtain that
\begin{align*}
\|\psi(t)-e^{it\Delta}&\psi_{0+}\|_{\dot H^{s_c}}
\lesssim  \|\psi\|_{Y([t,+\infty))}^{p+1}+\big(\big\|f_{out,L}\big\|_{Z_f([t,+\infty))}+\big\|g_L\big\|_{Z_g([t,+\infty))}\big)\|\psi\|_{Y([t,+\infty))}^{p}\\
&\quad +\big(\big\|f_{out,L}\big\|_{Z_f([t,+\infty))}+\big\|g_L\big\|_{Z_g([t,+\infty))}\big)^{p}\|\psi\|_{Y([t,+\infty))}\\
&\qquad +\big\|f_{out,L}\big\|_{Z_f([t,+\infty))}^{p+1}+\big\|g_L\big\|_{Z_g([t,+\infty))}^{p+1}.
\end{align*}
Since
$$
\big\|\psi\big\|_{Y(\R^+)}+\big\|f_{out,L}\big\|_{Z_f(\R^+)}+\big\|g_L\big\|_{Z_g(\R)}<+\infty,
$$
we have
$$
\big\|\psi\big\|_{Y([t,+\infty))}+\big\|f_{out,L}\big\|_{Z([t,+\infty))}+\big\|g_L\big\|_{Z_g([t,+\infty))}\to 0,\quad \mbox{as }\quad t\to +\infty.
$$
Therefore,
\begin{align*}
\|\psi(t)-e^{it\Delta}\psi_{0+}\|_{\dot H^{s_c}}\to 0,\quad \mbox{as}\quad t\to +\infty.
\end{align*}
Setting
$$
u_{0+}=\big(P_{\ge 1}\chi_{\ge 1}f\big)_{out}+g+\psi_{0+},
$$
we obtain \eqref{scattering2} and thus finish the proof of Theorem \ref{thm:main2}.

\vskip0.3cm

\subsection{Proof of Theorem \ref{thm:main3}}

We denote $h_{k,L}$ to be the linear flow of
 \begin{equation*}
   \left\{ \aligned
    &i\partial_{t}\phi+\Delta \phi=0,
    \\
    &\phi(0,x)  =h_k(x).
   \endaligned
  \right.
 \end{equation*}
 that is, $h_{k,L}(t)=e^{it\Delta}h_k$. Moreover,
 $$
 h_L=\sum\limits_{k=k_0}^{+\infty}h_{k,L}.
 $$
Then it follows from Proposition \ref{prop:bubbles} and \eqref{ass-f} that for $\gamma \in [0,s_c)$,
\begin{align}
\big\||\nabla|^\gamma h_L\big\|_{L^{q_\gamma}_{tx}(\R\times \R^d)}
\lesssim \epsilon^{\frac{s_c-\gamma}{d}}\|h\|_{\dot H^{s_c}}
\lesssim \epsilon^{\frac{s_c-\gamma}{d}-\alpha_0}.\label{17.20}
\end{align}

Now consider $\psi=u-h_L$, which  obeys the following equation,
 \begin{equation*}
   \left\{ \aligned
    &i\partial_{t}\psi+\Delta \psi=|u|^p u,
    \\
    &\psi(0,x)  =\psi_0.
   \endaligned
  \right.
 \end{equation*}
To solve the above equation, we first introduce the working space as follows.
We denote $X(I)$ for $I\subset \R$ to be the space under the norm
\begin{align*}
\|\psi\|_{X(I)}:=&\epsilon^{-\frac{s_c}{2d}}\sup\limits_{\gamma\in [0,s_c-\gamma_0]}\big\||\nabla|^{\gamma}\psi\big\|_{L^{q_\gamma}_{tx}(I\times \R^d)}+\big\||\nabla|^{s_c}\psi\big\|_{L^\infty_t L^2_x(I\times \R^d)}++\big\||\nabla|^{s_c}\psi\big\|_{L^{\frac{2(d+2)}{d}}_{tx}(I\times \R^d)},
\end{align*}
where $\gamma_0$ is a parameter decided later and
$$q_\gamma=\frac{2(d+2)}{d-2(s_c-\gamma)}.$$
It follows from the Duhamel formula that
\begin{align}
\psi(t)=e^{it\Delta}\psi_0+\int_0^{t}e^{i(t-s)\Delta}\big(|u|^pu\big)\,ds.\label{Duh-formu-II}
\end{align}
Then by using Lemma \ref{lem:strichartz}, Lemma \ref{lem:strichartz-inhomog} and \eqref{Duh-formu-II}, we have that for any $I\subset \R$ and $\gamma\in [0,s_c-\gamma_0]$,
\begin{align}
\big\||\nabla|^{\gamma}\psi\big\|_{L^{q_\gamma}_{tx}(I\times \R^d)}
\lesssim \big\|\psi_0\big\|_{\dot H^{s_c}(\R^d)}+\big\||\nabla|^{\gamma}(|u|^pu)\big\|_{L^{q_1'}_t L^{q_\gamma'}_x(I\times \R^d)},\label{17.34}
\end{align}
where $q$ satisfy
$$
\frac1{q_1}=\frac d2-\frac{d+1}{q_\gamma}=\frac{d}{2(d+2)}+(d+1)\varepsilon.
$$
Here and in the following, we denote $\varepsilon=s_c-\gamma$ and $\varepsilon_0=s_c-\gamma_0$ for short. Moreover, we choose $\varepsilon_0$ small enough such that
for any $\varepsilon\in [0,\varepsilon_0]$, $q_1>1$.
By using the H\"older inequality and the fractional Leibniz rule in Lemma \ref{lem:Frac_Leibniz}, we obtain
\begin{align}
\big\||\nabla|^{\gamma}(|u|^pu)\big\|_{L^{q_1'}_t L^{q_\gamma'}_x(I\times \R^d)}
\lesssim
\big\||\nabla|^{\gamma}u\big\|_{L^{q_\gamma}_{tx}(I\times \R^d)} \big\|u\big\|_{L^{q_2}_{t}L^{r_2}_x(I\times \R^d)}^p,
\label{15.59}
\end{align}
where $q_2,r_2$ satisfy
$$
\frac1{q_2}=\frac{2}{(d+2)p}-\frac {d}{p(d+2)} \varepsilon;\quad \frac1{r_2}=\frac{2}{(d+2)p}+\frac {2}{p(d+2)}\varepsilon.
$$
It follows from interpolation and the Sobolev embedding that
\begin{align*}
 \big\|u\big\|_{L^{q_2}_{t}L^{r_2}_x(I\times \R^d)}
 \lesssim & \big\|u\big\|_{L^{\infty}_{t}L^{\frac{dp}{2}}_x(I\times \R^d)}^\theta\big\|u\big\|_{L^{\frac{(d+2)p}{2}}_{tx}(I\times \R^d)}^{1-\theta}\\
  \lesssim &\big\|u\big\|_{L^{\infty}_{t}\dot H^{s_c}_x(I\times \R^d)}^\theta\big\|u\big\|_{L^{\frac{(d+2)p}{2}}_{tx}(I\times \R^d)}^{1-\theta},
\end{align*}
where
$
\theta=\frac{d(d+2)}{2}\varepsilon<1
$
(after choosing $\varepsilon_0$ small enough). Notice that by \eqref{ass-f} and \eqref{17.20}, one has
$$
\big\|u\big\|_{L^{\infty}_{t}\dot H^{s_c}_x(I\times \R^d)}
\lesssim \big\|h_L\big\|_{L^{\infty}_{t}\dot H^{s_c}_x(I\times \R^d)}+\big\|\psi\big\|_{L^{\infty}_{t}\dot H^{s_c}_x(I\times \R^d)}
\lesssim\epsilon^{-\alpha_0}+\big\|\psi\big\|_{X(I)}
$$
and
$$
\big\|u\big\|_{L^{\frac{(d+2)p}{2}}_{tx}(I\times \R^d)}
\lesssim \big\|h_L\big\|_{L^{\frac{(d+2)p}{2}}_{tx}(I\times \R^d)}+\big\|\psi\big\|_{L^{\frac{(d+2)p}{2}}_{tx}(I\times \R^d)}
\lesssim\epsilon^{\frac{s_c}{d}-\alpha_0}+\epsilon^{\frac{s_c}{2d}}\big\|\psi\big\|_{X(I)}.
$$
Then,
\begin{align*}
 \big\|u\big\|_{L^{q_2}_{t}L^{r_2}_x(I\times \R^d)}
 \lesssim & \epsilon^{\frac{s_c}{d}(1-\theta)-\alpha_0}+\epsilon^{-\alpha_0\theta+\frac{s_c}{2d}(1-\theta)}\big\|\psi\big\|_{X(I)}^{1-\theta}\\
& +\epsilon^{(\frac{s_c}{d}-\alpha_0)(1-\theta)}\big\|\psi\big\|_{X(I)}^{\theta}+\epsilon^{\frac{s_c}{2d}(1-\theta)}\big\|\psi\big\|_{X(I)},
\end{align*}
which combined with \eqref{17.20} and \eqref{15.59} gives
\begin{align}
\big\||\nabla|^{\gamma}(|u|^pu)\big\|_{L^{q_1'}_t L^{q_\gamma'}_x(I\times \R^d)}
\lesssim& \epsilon^{\frac{s_c}{2d}}\big\|\psi\big\|_{X(I)}\cdot \Big(\epsilon^{\frac{s_c}{d}(1-\theta)-\alpha_0}+\epsilon^{-\alpha_0\theta+\frac{s_c}{2d}(1-\theta)}\big\|\psi\big\|_{X(I)}^{1-\theta}\nonumber\\
&\quad +\epsilon^{(\frac{s_c}{d}-\alpha_0)(1-\theta)}\big\|\psi\big\|_{X(I)}^{\theta}+\epsilon^{\frac{s_c}{2d}(1-\theta)}\big\|\psi\big\|_{X(I)}\Big)^p.\label{15.59-ex}
\end{align}
Thus by \eqref{17.34}, \eqref{15.59-ex} and \eqref{10.02}, we have
\begin{align*}
\epsilon^{-\frac{s_c}{2d}}\big\||\nabla|^{\gamma}\psi\big\|_{L^{q_\gamma}_{tx}(I\times \R^d)}
\lesssim & \epsilon^{1-\frac{s_c}{2d}}+\big\|\psi\big\|_{X(I)}\cdot \Big(\epsilon^{\frac{s_c}{d}(1-\theta)-\alpha_0}+\epsilon^{-\alpha_0\theta+\frac{s_c}{2d}(1-\theta)}\big\|\psi\big\|_{X(I)}^{1-\theta}\\
&\quad +\epsilon^{(\frac{s_c}{d}-\alpha_0)(1-\theta)}\big\|\psi\big\|_{X(I)}^{\theta}+\epsilon^{\frac{s_c}{2d}(1-\theta)}\big\|\psi\big\|_{X(I)}\Big)^p.
\end{align*}
Choosing $\alpha_0$ and $\varepsilon_0$ small enough, and using the Cauchy-Schwartz inequality, we obtain that
\begin{align}
\epsilon^{-\frac{s_c}{2d}}\big\||\nabla|^{\gamma}\psi\big\|_{L^{q_\gamma}_{tx}(I\times \R^d)}
\lesssim & \epsilon^{1-\frac{s_c}{2d}}+\epsilon^{\frac{s_cp}{4d}}\big\|\psi\big\|_{X(I)}+\big\|\psi\big\|_{X(I)}^{1+p}.\label{12.21}
\end{align}

For the other two norms in $X(I)$, one has
\begin{align*}
\big\||\nabla|^{s_c}\psi\big\|_{L^\infty_{t}L^2_x(I\times \R^d)}&+\big\|\psi\big\|_{L^{\frac{(d+2)p}{2}}_{tx}(I\times \R^d)}\\
\lesssim &\big\|\psi_0\big\|_{\dot H^{s_c}(\R^d)}+\big\||\nabla|^{s_c}(|u|^pu)\big\|_{L^{\frac{2d+4}{d+4}}_{tx}(I\times \R^d)}\\
\lesssim &\big\|\psi_0\big\|_{\dot H^{s_c}(\R^d)}+\big\||\nabla|^{s_c}u\big\|_{L^{\frac{2d+4}{d}}_{tx}(I\times \R^d)}\big\|u\big\|_{L^{\frac{(d+2)p}{2}}_{tx}(I\times \R^d)}^p.
\end{align*}
Notice that by \eqref{17.20}, one has
$$
\big\||\nabla|^{s_c}u\big\|_{L^{\frac{2d+4}{d}}_{tx}(I\times \R^d)}
\lesssim \big\||\nabla|^{s_c}h_L\big\|_{L^{\frac{2d+4}{d}}_{tx}(I\times \R^d)}+\big\||\nabla|^{s_c}\psi\big\|_{L^{\frac{2d+4}{d}}_{tx}(I\times \R^d)}
\lesssim\epsilon^{-\alpha_0}+\big\|\psi\big\|_{X(I)}
$$
and
$$
\big\|u\big\|_{L^{\frac{(d+2)p}{2}}_{tx}(I\times \R^d)}
\lesssim\epsilon^{\frac{s_c}{d}-\alpha_0}+\epsilon^{\frac{s_c}{2d}}\big\|\psi\big\|_{X(I)}.
$$
Then by using similar argument as above, we have
\begin{align}
\big\||\nabla|^{s_c}\psi\big\|_{L^\infty_{t}L^2_x(I\times \R^d)}&+\big\|\psi\big\|_{L^{\frac{(d+2)p}{2}}_{tx}(I\times \R^d)}\notag\\
\lesssim & \epsilon+\epsilon^{\frac{s_cp}{2d}}+\epsilon^{\frac{s_cp}{4d}}\|\psi\|_{X(I)}+\|\psi\|_{X(I)}^{1+p}.\label{12.31}
\end{align}

 It follows from \eqref{12.21} and \eqref{12.31} that
\begin{align*}
\|\psi\|_{X(I)}\lesssim \epsilon^{1-\frac{s_c}{2d}}+\epsilon+\epsilon^{\frac{s_cp}{2d}}+\epsilon^{\frac{s_cp}{4d}}\|\psi\|_{X(I)}+\|\psi\|_{X(I)}^{1+p}.
\end{align*}
Hence, for some $a_0>0$,
\begin{align*}
\|\psi\|_{X(I)}\lesssim \epsilon^{a_0}+\|\psi\|_{X(I)}^{1+p}.
\end{align*}
Therefore, by the bootstrap argument, we obtain that
\begin{align*}
\big\|\psi\big\|_{X(I)}
\lesssim \epsilon^{a_0}.
\end{align*}
One can see that the estimate above is uniformly in interval $I\subset \R$, which gives the global existence of the solution. Further, the scattering statement can be proved in the same way as in the proof of Theorem \ref{thm:main2}.
This finishes the proof of Theorem \ref{thm:main3}.

\section*{Acknowledgements}

Part of this work was done while M. Beceanu, A. Soffer and Y. Wu were visiting CCNU (C.C.Normal University) Wuhan, China. The authors thank the institutions for their hospitality and the support. M.B. is partially supported by NSF grant DMS 1700293.  Q.D.is partially supported by NSFC 11661061 and 11771165, and the Fundamental Research Funds for the Central Universities (CCNU18QN030, CCNU18CXTD04).  A.S is partially supported by NSF grant DMS 01600749 and NSFC 11671163. Y.W. is partially supported by NSFC 11771325 and 11571118. A.S. and Y.W. would like to thank  Ch. Miao for useful discussions.

\end{document}